\let\mathg\mathfrak
\theoremstyle{plain}
\newtheorem{cor}{Corollary}[section]
\newtheorem{lem}{Lemma}[section]
\newtheorem{thm}{Theorem}[section]
\newtheorem{prop}{Proposition}[section]
\theoremstyle{definition}
\newtheorem{exa}{Example}[section]
\newtheorem{NB}{Remark}[section]
\newcommand{\bdm}{\begin{displaymath}}
\newcommand{\edm}{\end{displaymath}}
\newcommand{\be}{\begin{equation}}
\newcommand{\ee}{\end{equation}}
\newcommand{\ba}[1]{\begin{array}{#1}}
\newcommand{\ea}{\end{array}}
\newcommand{\btab}{\begin{tabular}}
\newcommand{\etab}{\end{tabular}}
\newcommand{\D}{\slash{\!\!\!\!D}}
\newcommand{\x}{\times}
\newcommand{\ox}{\otimes}
\newcommand{\ra}{\rightarrow}
\newcommand{\lra}{\longrightarrow}
\newcommand{\lmapsto}{\longmapsto}
\newcommand{\tr}{\ensuremath{\mathrm{tr}}}
\newcommand{\C}{\ensuremath{\mathbb{C}}}
\renewcommand{\H}{\ensuremath{\mathbb{H}}}
\newcommand{\R}{\ensuremath{\mathbb{R}}}
\newcommand{\K}{\ensuremath{\mathbb{K}}}
\newcommand{\Z}{\ensuremath{\mathbb{Z}}}
\renewcommand{\P}{\ensuremath{\mathbb{P}}}
\newcommand{\vrho}{\ensuremath{\varrho}}
\newcommand{\End}{\ensuremath{\mathrm{End}}}
\newcommand{\Ric}{\ensuremath{\mathrm{Ric}}}
\newcommand{\Scal}{\ensuremath{\mathrm{Scal}}}
\newcommand{\Scalg}{\ensuremath{\mathrm{Scal}^g}}
\newcommand{\Ad}{\ensuremath{\mathrm{Ad}\,}}
\newcommand{\diag}{\ensuremath{\mathrm{diag}}}
\newcommand{\gl}{\ensuremath{\mathg{gl}}}
\newcommand{\GL}{\ensuremath{\mathrm{GL}}}
\newcommand{\PSU}{\ensuremath{\mathrm{PSU}}}
\newcommand{\BSp}{\ensuremath{\mathrm{BSp}}}
\newcommand{\BSO}{\ensuremath{\mathrm{BSO}}}
\newcommand{\un}{\ensuremath{\mathg{u}}}
\newcommand{\su}{\ensuremath{\mathg{su}}}
\newcommand{\SU}{\ensuremath{\mathrm{SU}}}
\newcommand{\U}{\ensuremath{\mathrm{U}}}
\newcommand{\so}{\ensuremath{\mathg{so}}}
\newcommand{\SO}{\ensuremath{\mathrm{SO}}}
\renewcommand{\sp}{\ensuremath{\mathg{sp}}}
\newcommand{\Sp}{\ensuremath{\mathrm{Sp}}}
\newcommand{\Spin}{\ensuremath{\mathrm{Spin}}}
\newcommand{\Orth}{\ensuremath{\mathrm{O}}}
\newcommand{\p}{\ensuremath{\mathg{p}}}
\renewcommand{\k}{\ensuremath{\mathfrak{k}}}
\newcommand{\g}{\ensuremath{\mathfrak{g}}}
\newcommand{\h}{\ensuremath{\mathfrak{h}}}
\newcommand{\m}{\ensuremath{\mathfrak{m}}}
\newcommand{\z}{\ensuremath{\mathfrak{z}}}
\renewcommand{\a}{\ensuremath{\mathfrak{a}}}
\begin{document}
\def\haken{\mathbin{\hbox to 6pt{%
                 \vrule height0.4pt width5pt depth0pt
                 \kern-.4pt
                 \vrule height6pt width0.4pt depth0pt\hss}}}
    \let \hook\intprod
\setcounter{equation}{0}
%
%
\thispagestyle{empty}
%
\date{\today}
\title[$\Sp(3)$ structures]{\large $\Sp(3)$ \normalsize structures on \large 
$14$\normalsize-dimensional manifolds}
%
%
%
\author{Ilka Agricola}
\author{Thomas Friedrich}
\author{Jos H\"oll}
\address{\hspace{-5mm} 
Ilka Agricola, Jos H\"oll \newline
Fachbereich Mathematik und Informatik \newline
Philipps-Universit\"at Marburg\newline
Hans-Meerwein-Strasse \newline
D-35032 Marburg, Germany\newline
{\normalfont\ttfamily agricola@mathematik.uni-marburg.de}\newline
{\normalfont\ttfamily hoellj@mathematik.uni-marburg.de}}
\address{\hspace{-5mm} 
Thomas Friedrich\newline
Institut f\"ur Mathematik \newline
Humboldt-Universit\"at zu Berlin\newline
Sitz: WBC Adlershof\newline
D-10099 Berlin, Germany\newline
{\normalfont\ttfamily friedric@mathematik.hu-berlin.de}}
%
%
\keywords{Sp(3); rank two symmetric space; connection with skew-symmetric 
torsion; $G$ structures on Lie groups; isoparametric hypersurface.}  
\begin{abstract}
The present article investigates  $\Sp(3)$ structures on $14$-dimensional 
Riemannian manifolds, a continuation of the recent study of
manifolds modeled on rank two symmetric spaces (here: $\SU(6)/\Sp(3)$).
We derive topological criteria for the existence of such a structure and
construct large families of homogeneous examples. As a by-product,
we prove a general uniqueness criterion for characteristic connections
of $G$ structures and that the notions of biinvariant, canonical, and
characteristic connections coincide on Lie groups with biinvariant metric.
\end{abstract}
\maketitle
\pagestyle{headings}
%
%
%
\section{Introduction }\noindent
%
\subsection{Background}
The present article is a contribution to the investigation
of Riemannian manifolds modeled on rank two symmetric spaces,
carried out by different authors in recent years
(for example, \cite{Bobienski&N07}, \cite{Chiossi&F}, \cite{nur}, 
\cite{ABF}, \cite{Chiossi&M12}).
They constitute an interesting new class of special geometries
that goes back to Cartan's classical study of isoparametric hypersurfaces
(\cite{Cartan38}, \cite{Cartan39}), as we shall now explain.

A  Riemannian manifold  immersed in a space form with
codimension one is called
an  isoparametric hypersurface if its principal curvatures are constant;
the main case of interest are immersions into spheres $S^{n-1}\subset\R^n$, 
the case we shall be interested in henceforth.
If one denotes by $p$ the number of different principal curvatures,
Cartan proved that for $p=1,2$ only certain spheres are possible,
while for $p=3$, tubes of constant radius over an embedding of $\K\P^2$
into $S^{n-1}$ are possible for $\K=\R,\C,\H$, and $\mathbb{O}$:
Hence, for $p=3$, the dimension 
$n$ must be $5,8,14$, or $26$. The main key of the
construction are the so called Cartan-M\"unzner polynomials,
homogeneous harmonic polynomials $F$ of degree $p$ satisfying
$\|\mathrm{grad} F \|^2=p^2\|x\|^{2p-2}$. The level sets of $F\big|_{S^{n-1}}$
define an isoparametric hypersurface family. Geometrically,
$F$ can be understood as a symmetric rank $p$ tensor $\Upsilon$, and each level
set $M$ will be invariant under the stabilizer $G_\Upsilon$ of $\Upsilon$.
Hence, isoparametric hypersurfaces lead to Euclidean spaces $\R^n$ admitting
a symmetric rank $p$ tensor $\Upsilon$ and a $G_\Upsilon$ structure,
and, for $p=3$,  this leads us in a natural way to manifolds of
dimension $5,8,14$, and $26$.
 
The relation to rank two symmetric spaces is as follows:
If $M^{n-2}\subset S^{n-1}=\SO(n)/\SO(n-1)$ is orbit of some Lie group
$G\subset \SO(n)$, then it is automatically isoparametric.
Hence, the classification of homogeneous isoparametric hypersurfaces can
be deduced from the classification of all subgroups $G\subset \SO(n)$
such that the codimension in $S^{n-1}$ (resp.~$\R^n$) of its principal 
$G$-orbit is one (resp.~two). By results of Hsiang and Lawson,
this is exactly the case for the isotropy representations of
rank $2$ symmetric spaces \cite{Hsiang&L71}, \cite{Hsiang80}.
From the root data of the symmetric space, one deduces that for $p=3$,
only 4 symmetric spaces are possible, namely, 
$\SU(3)/ \SO(3)$, $\SU(3)$, $\SU(6)/\Sp(3)$, and  $E_6/F_4$.
Their relation to the division algebras  $\K=\R,\C,\H$, and $\mathbb{O}$
(see Cartan's result) is through their isotropy representations,
they are realized on trace free symmetric endomorphisms (see Table 
\ref{rk2-symm-spaces}).

\begin{table}
\bdm{\small
\ba{|l||l|l|l|l|}\hline
\text{dimension} & 5 & 8 & 14 & 26 \\ \hline 
\text{symmetric model} & \SU(3)/ \SO(3) & \SU(3)& \SU(6)/\Sp(3) &  E_6/F_4\\ \hline 
\text{isotropy rep.} & \SO(3) \text{ on } S^2_0(\R^3)& \SU(3) \text{ on }
S^2_0(\C^3) & \Sp(3) \text{ on }S^2_0(\H^3)   & E_6 \text{ on } S^2_0(\mathbb{O}^3) \\
\hline
\ea}\edm
\caption{Rank two symmetric spaces and their isotropy representations}
\label{rk2-symm-spaces}
\end{table}

We are interested in Riemannian manifolds in these $4$ dimensions
admitting a symmetric, trace free, $3$-tensor $\Upsilon$ \cite{nur};
its stabilizer is then resp.~$\SO(3),\ \SU(3), \Sp(3)$, or $F_4$.
The $5$-dimensional case and the corresponding $SO(3)$
structures were studied by several authors in \cite{ABF},
\cite{Bobienski&N07}, \cite{Chiossi&F}. For the $8$-dimensional case and 
the corresponding $\SU(3)$ structures, we refer to \cite{Hit}, \cite{Witt}, 
and \cite{puh}. The present paper will be the first dealing with $n=14$.
As far as we know, nothing is known for manifolds modeled on the
exceptional symmetric space $E_6/F_4$. From the experience
of the present work, one can expect the computations to be challenging, 
but this case has the charm
that it is the first occurrence of the exceptional Lie group $F_4$ in
differential geometry.

\subsection{Outline}
By definition,  an $\Sp(3)$ structure on a
$14$-dimensional Riemannian manifold will be a reduction of the 
frame bundle to an $\Sp(3)$-bundle. 
We  take a closer look at $\Sp(3)$ structures, and classify the different
types through their intrinsic torsion. This is the first occurrence 
where the high dimension implies the failure of standard techniques:
we were not able to prove the uniqueness of the so-called characteristic
connection of an $\Sp(3)$ structures in the usual way, and therefore
proved a general uniqueness criterion which is valuable in its own
(Theorem \ref{kernel-theta}), based on the skew holonomy theorem from
\cite{Agricola&F04} and \cite{Olmos&R08}.

We then derive some topological conditions for a $14$-dimensional 
manifold to carry an $\Sp(3)$ structure.
They are a consequence of the computation of the cohomology ring $H^*(BSp(3)\,
; \, \Z )=\Z[q_4,q_8,q_{12}]$ for some $q_i\in H^i(BSp(3))$, see \cite{mito}. In
particular, for a compact oriented Riemannian manifold with $\Sp(3)$ structure
the Euler characteristic as well as  the $i$-th
Stiefel-Whitney classes ($i\neq 4,8,12$) must vanish. 
Any $\Sp(3)$ structure on a $14$-dimensional
manifold induces a unique spin structure. Besides $\SU(6)/\Sp(3)$, we will
construct large families of manifolds admitting an $\Sp(3)$ reduction.

The next section is devoted to the existence problem of 
$\Sp(3)$ structures (and other $G$ structures) on Lie groups---for example,
whether $G_2$ carries an $\Sp(3)$ structure. For Lie groups equipped with
a biinvariant metric, we prove that the notions of characteristic, canonical,
and biinvariant connections coincide, and that these are precisely
the connections induced by the commutator (Theorem 
\ref{conn-on-lie-groups}). The link to
$\Sp(3)$ structure is subtle: Firstly, this result treats the case 
excluded in Theorem \ref{kernel-theta}; secondly, the result is
intricately linked to previous work by Laquer on
biinvariant connections \cite{Laquer92a}, \cite{Laquer92b}, in which
the rank two symmetric spaces and the Lie groups $\U(n),\ \SU(n)$
play an exceptional role.

The longest part of the paper is devoted to the explicit construction
and investigation of $14$-dimensional homogeneous manifolds with
$\Sp(3)$ structure, hence proving that such manifolds exist and that
they carry a rich
geometry. The manifolds are a higher dimensional analogue
of the Aloff-Wallach space, $\SU(4)/\SO(2)$, the related quotients 
$\U(4)/ \SO(2)\x \SO(2)$,
$\U(4)\x\U(1)/ \SO(2)\x \SO(2)\x\SO(2)$, and finally 
$\SU(5)/\Sp(2)$ (this is the
same manifold as the symmetric space $\SU(6)/\Sp(3)$, but the homogeneous
structure is different). In all situations, there are large families of
metrics admitting an $\Sp(3)$ structure with characteristic connection.
For the first three spaces, the qualitative
result is the following: the $\Sp(3)$ structure is of mixed type,
the characteristic torsion is parallel, and its holonomy is contained
in the maximal torus of $\Sp(3)$. For the last example, the picture
is different: It is a $3$-parameter deformation of the integrable
$\Sp(3)$ structure (i.\,e.~the structure corresponding to the symmetric
space), it is of mixed type for most metrics, but of pure
type for some, the characteristic connection has parallel torsion 
for a $2$-parameter subfamily, and its holonomy lies between
$\Sp(2)$ and $\Sp(3)$. The Appendix contains the explicit realizations of
representations needed for performing the calculations.
%
%
\subsection{Acknowledgments}
%
Some of the results presented in this article rely on involved 
representation theoretic
computations. These were obtained with help of the computer algebra
system LiE \cite{LiE}; hence, we thank Marc van Leeuwen (as representative of
the whole LiE team) for making such a nice tool available to the 
scientific community. The computer algebra system Maple was also
intensively used.
Some preliminary results of this article
appeared in the last author's diploma thesis \cite{Hoell11}.
The first author acknowledges financial support by the
 DFG within the priority programme 1388 "Representation theory".
The last author is funded through a Ph.\,D.~grant of
Philipps-Universit\"at Marburg.
%
\section{Definition and properties of   $\Sp(3)$ structures}\noindent
%
\subsection{Basic set-up}
The $14$-dimensional irreducible representation $V^{14}$ of the Lie group 
$\Sp(3)$ gives rise to an embedding $\Sp(3)\subset \SO(14)$. One possible 
realization of this representation is by conjugation on trace free
hermitian  quaternionic endomorphisms of $\H^3$, denoted by $S_0^2(\H^3)$. 
Therefore, it is natural to realize the Lie Group $Sp(3)$ as quaternionic, 
hermitian endomorphisms of $\H^3$: 
\bdm
\Sp(3) = \{g\in \SU(6) ~ | ~ g^t Jg=J\}=\{g\in
\GL(3,\H)~|~gg^t=\textbf{I}_3\}, 
\mbox{ where } J= \begin{bmatrix} 0 & \textbf{I}_3 \\ -\textbf{I}_3 & 0 \end{bmatrix}
\edm
and $\textbf{I}_3$ denotes the identity of $\C^3$ (respectively $\H^3$).
The second equality is established by 
$g=\begin{bmatrix}A&B\\-\bar{B}&\bar{A}\end{bmatrix}\mapsto A+jB$, 
$\{1,i,j,k\}$ being the usual quaternionic units. Thus we get the 
$\Sp(3)$-representation as
\bdm
\vrho(g)X\ :=\ gXg^{-1} \text{ for }g\in \Sp(3),\ X\in S^2_0(\H^3)\ \cong\ V^{14}.
\edm
We give a precise description of this representation in Appendix \ref{ch:ap:sp3}.
The space $S^2(\H^3)$ of symmetric quaternionic endomorphisms of 
$\H^3$ is a classical  Jordan algebra with respect to the product 
 $X\circ Y:=\frac{1}{2}(XY+YX)$. We define a symmetric $(3,0)$-tensor
 $\Upsilon$ by polarization from the trace,
\bdm
\Upsilon (X,Y,Z)\ :=\ 2\sqrt{3}[\tr X^3+ \tr Y^3+ \tr Z^3] - \tr(X+Y)^3 -
\tr(X+Z)^3-\tr(Y+Z)^3+\tr(X+Y+Z)^3.
\edm
A second tensor is obtained as $\tilde\Upsilon(X,Y,Z) := \Upsilon (\bar{X},
\bar{Y}, \bar{Z})$. Because of the non-commutativity of $\H$, the symmetric 
$(3,0)$-tensors $\Upsilon$ and $\tilde\Upsilon$ are not conjugate under the
action of $\SO(14)$, but they
both have stabilizer $\Sp(3)$. Alternatively, one may use the Jordan
determinant for defining a symmetric tensor; again, the  non-commutativity implies 
the existence of two determinants $\det_1, \det_2$. However, $\det_1(X)=\tr
X^3$, hence polarization and hermitian conjugation yields again the same tensors 
$\Upsilon$ and $\tilde\Upsilon$. We observe that, in this special situation,
there exists an  alternative object realizing the reduction from
$\SO(14)$ to $\Sp(3)$: $\Sp(3)$ is the stabilizer of a generic $5$-form
$\omega^5$ in $14$ dimensions. Thus, $\Sp(3)$ geometry continues in a 
natural way the investigation of $3$-forms ($n=7$ and $G=G_2$), and $4$-forms
($n=8$ and $G=\Spin(7)$ as well as all quaternionic K\"ahler geometries in
dimensions $4n$).

By definition, an $\Sp(3)$ structure on a $14$-dimensional Riemannian 
manifold $(M,g)$ is a
reduction of its frame bundle to a $\Sp(3)$ subbundle. This is 
equivalent to the existence of a $(3,0)$-tensor $\Upsilon$, which is to 
be associated with the linear map $TM\rightarrow \mathrm{End}(TM)$, 
$v\mapsto \Upsilon_v$ defined by $(\Upsilon_v)_{ij}=\Upsilon_{ijk}v_k$ with 
the following properties \cite{nur}
\begin{enumerate}
\item it is totally symmetric: $g(u,\Upsilon_v  w)= g(w,\Upsilon_v u)=
g(u,\Upsilon_w v)$,
\item it is trace-free: $\tr\Upsilon_v=0$,
\item it reconstructs the metric: $\Upsilon^2_v v =g(v,v)v$.
\end{enumerate}
A first example of such a manifold is the symmetric space $\SU(6)/\Sp(3)$. 
As Kerr shows in \cite[Section 4]{ker},  this is the space of quaternionic 
structures on $\R^{12}\cong\C^{6}$ for a fixed complex structure. Further non
symmetric examples will be given in Section \ref{ch:ex}.
\subsection{Types and general properties of $\Sp(3)$ 
structures}\label{subsection:types}
The different geometric types of $G$ structures on a Riemannian
manifold $(M,g)$, i.\,e.~of reductions $\mathcal{R}$ of the frame bundle
$\mathcal{F}(M)$ to the subgroup $G\subset \Orth(n)$,
are classified via the \emph{intrinsic torsion} (\cite{frie}, see
also \cite{Salamon89}, \cite{Fino98}).

Given a Riemannian $14$-manifold $M^{14}$ with an $\Sp(3)$ structure, 
we consider the Levi-Civita connection $Z^g$ as a $\so(14)$-valued 
$1$-form on the frame bundle $\mathcal{F}(M^{14})$. 
If unique, we shall denote the irreducible 
$\sp(3)$-representation
of dimension $n$ by $V^n$ (in particular, we shall write sometimes
$\sp(3)=V^{21}$). To start with,
the complement of the Lie algebra $\sp(3)$ inside $\so(14)$
is an irreducible $\sp(3)$-module $V^{70}$. 
Hence, the restriction of $Z^g$ 
to $\mathcal{R}$ can be split into
\bdm 
Z^g\big|_{T\mathcal{R} } \ = \ Z^* \, \oplus \, \Gamma \, \in \, \so(14)\ = \ \sp(3)\, \oplus\, V^{70},
\edm
where $\Gamma$ is called the \emph{intrinsic torsion}. In every point $x$,
$\Gamma_x\in  V^{14}\, \otimes \, V^{70}$. The following Lemma
may be checked directly with LiE:
\begin{lem}\label{decomp-reps}
 $\Lambda^3(V^{14})$ splits into four irreducible components,
\bdm
 \Lambda^3(V^{14})\ = \ \sp(3)\, \oplus \, V^{70} \, \oplus \, V^{84} \,
 \oplus \, V^{189} \ ,
\edm
and  $V^{14}\otimes V^{70}$ splits into seven irreducible components,
\bdm
 V^{14}\, \otimes \, V^{70} \ = \ \Lambda^3(V^{14})\, \oplus \, V^{14} \, \oplus
 \, V^{90} \, \oplus \, V^{512} \ .
\edm
\end{lem}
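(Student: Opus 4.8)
The plan is to verify both decompositions purely by representation theory of $\Sp(3)$, since the lemma is stated as a direct computation ``with LiE''. The essential point is that $\Sp(3)$ is a compact simple Lie group of rank $3$, and all the modules appearing ($V^{14}$, $\sp(3)=V^{21}$, $V^{70}$, $V^{84}$, $V^{90}$, $V^{189}$, $V^{512}$) are real irreducible representations labelled unambiguously by their dimensions. First I would fix the conventions: $V^{14}$ is the fundamental representation with highest weight $\omega_3$ (equivalently the $S^2_0(\H^3)$ realization given above), the adjoint module $\sp(3)=V^{21}$ has highest weight $2\omega_1$, and $V^{70}=\so(14)\ominus\sp(3)$ is the orthogonal complement with highest weight $\omega_2+\omega_3$ (this is forced once one knows $\dim\so(14)=91=21+70$). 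The whole proof reduces to decomposing two tensor/exterior products into irreducibles and matching dimensions.

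For the first decomposition I would compute $\Lambda^3(V^{14})$ directly. Its dimension is $\binom{14}{3}=364$, and the claim is $364=21+70+84+189$, which checks. The clean way to obtain the summands is to decompose the full tensor cube $V^{14}\otimes V^{14}\otimes V^{14}$, or more efficiently to use the known decomposition $V^{14}\otimes V^{14}=\Sym^2\oplus\Lambda^2$ with $\Lambda^2(V^{14})=\sp(3)\oplus V^{70}$ (so $\dim 91=21+70$, consistent with $\so(14)$) and $\Sym^2(V^{14})=V^{1}\oplus V^{14}\oplus V^{90}$, and then to apply the Littlewood–Richardson/weight-multiplicity machinery (or simply read off the LiE output) to $\Lambda^3$. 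In practice I would compute the formal character of $\Lambda^3(V^{14})$ via the Newton/Weyl alternating-sum formula for exterior powers and project onto dominant weights; the four highest weights that survive are those of $V^{21}$, $V^{70}$, $V^{84}$, $V^{189}$, and uniqueness of each dimension among $\Sp(3)$-irreducibles in this range makes the identification unambiguous. The appearance of $\sp(3)$ and $V^{70}$ as summands of $\Lambda^3(V^{14})$ is exactly what makes the intrinsic-torsion analysis of the following sections possible, so it is worth confirming those two weights explicitly rather than only by dimension count.

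For the second decomposition I would compute $V^{14}\otimes V^{70}$, whose dimension is $14\cdot 70=980$, and verify $980=364+14+90+512$, i.e.\ that $V^{14}\otimes V^{70}=\Lambda^3(V^{14})\oplus V^{14}\oplus V^{90}\oplus V^{512}$, where $\Lambda^3(V^{14})$ is the $364$-dimensional sum just found. Again the method is to multiply the highest weight $\omega_2+\omega_3$ of $V^{70}$ by the highest weight $\omega_3$ of $V^{14}$ and decompose; dimension bookkeeping together with the fact that $364+14+90+512=980$ pins down the multiplicities (each constituent occurring once). The only genuine subtlety I anticipate is confirming that every irreducible summand occurs with multiplicity exactly one and that there are no accidental coincidences of dimension forcing a wrong identification --- for instance ensuring the $364$-dimensional piece really is the reducible module $\Lambda^3(V^{14})$ and not some other $364$-dimensional combination. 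This I would settle by comparing highest weights, not merely dimensions: one checks that the highest weight of $\Lambda^3(V^{14})$, namely that of $V^{189}$, indeed arises in $V^{14}\otimes V^{70}$, and similarly for the subdominant weights, so that the full $\Lambda^3(V^{14})$ appears as a submodule.

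The main obstacle is therefore not conceptual but computational: the weight multiplicities of $\Sp(3)$-modules of dimension up to $\sim 500$ are tedious to produce by hand, which is precisely why the lemma is certified by LiE. My proposal is to treat the LiE computation as the proof, supplementing it with the dimension identities $\binom{14}{3}=21+70+84+189$ and $14\cdot 70=364+14+90+512$ as an independent consistency check, and with an explicit highest-weight identification of each summand (in terms of the fundamental weights $\omega_1,\omega_2,\omega_3$ of $\Sp(3)$) to rule out any ambiguity coming from coincidental dimensions.
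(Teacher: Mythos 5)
Your overall strategy --- certify the two decompositions by a LiE computation, backed up by the dimension identities $\binom{14}{3}=364=21+70+84+189$ and $14\cdot 70=980=364+14+90+512$ and by explicit highest weights --- is exactly what the paper does; its entire proof is the sentence ``may be checked directly with LiE''. So the approach is not in question.

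However, the highest-weight bookkeeping you propose as the safeguard against ``accidental coincidences of dimension'' is itself wrong in two places, and this matters precisely because $C_3$ has \emph{two} inequivalent $14$-dimensional fundamental representations. The module $V^{14}$ of the paper is the isotropy representation of $\SU(6)/\Sp(3)$, i.e.\ the complement of $\sp(3)$ in $\su(6)$; complexifying and using $(\C^6)^*\cong\C^6$ via the symplectic form gives $\slin(6,\C)=\sp(6,\C)\oplus\Lambda^2_0\C^6$, so $V^{14}=V(\omega_2)=\Lambda^2_0\C^6$, \emph{not} $V(\omega_3)=\Lambda^3_0\C^6$ as you state. Since $C_3$ has no outer automorphisms, $V(\omega_2)$ and $V(\omega_3)$ are genuinely different inputs, and feeding $\omega_3$ into LiE need not reproduce the stated decomposition of $\Lambda^3(V^{14})$. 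Likewise, $V^{70}$ is not $V(\omega_2+\omega_3)$: by the Weyl dimension formula $\dim V(\omega_2+\omega_3)=126$, while the unique $70$-dimensional irreducible is $V(\omega_1+\omega_3)$; the dimension count $91=21+70$ forces the complement of $\sp(3)$ in $\Lambda^2 V^{14}\cong\so(14)$ to have dimension $70$, but it does not ``force'' the weight you wrote. With the corrected labels ($V^{14}=V(\omega_2)$, $\sp(3)=V(2\omega_1)$, $V^{70}=V(\omega_1+\omega_3)$, $V^{84}=V(2\omega_3)$, $V^{90}=V(2\omega_2)$, $V^{189}=V(2\omega_1+\omega_2)$, $V^{512}=V(\omega_1+\omega_2+\omega_3)$) your plan goes through and coincides with the paper's.
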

Thus, there are  $7$ basic types of $\Sp(3)$ structures,
classified by the irreducible submodules of $ V^{14}\, \otimes \, V^{70}$;
we call a structure \emph{of type $V^i$} if $\Gamma$ is contained in $V^i$
and we call it \emph{of mixed type} if $\Gamma$ is not contained in one
irreducible representation. 
Recall  that a given $\Sp(3)$ structures will admit an invariant metric
connection with skew symmetric torsion (`a' characteristic connection) 
if and only if $\Gamma$ lies in the image of the $\Sp(3)$-equivariant map \cite{frie}
\bdm
\Theta\, :=\, \mathrm{id}\otimes \mathrm{pr}_{V^{70}} : \quad 
\Lambda^3(V^{14})\lra  V^{14}\otimes V^{70}.
\edm
In this definition, we understand $\Lambda^3(V^{14})$ as a subspace
of $V^{14}\ox \Lambda^2(V^{14})$ and identify $\Lambda^2(V^{14})$ with
$\so(14)$. This shows  that $\Sp(3)$ structures with
$\Gamma\in  V^{14} \oplus V^{90} \oplus V^{512}$ cannot admit  a 
characteristic connection. The connection will be unique---and 
thus will deserve to be called
\emph{characteristic connection}---if and only if $\Theta$ is injective.
For small groups and dimensions, injectivity can often be checked directly,
and this is a well-known result for almost Hermitian or $G_2$ structures.
In our case, a direct verification fails for the first time; we will 
thus prove a general criterion
that follows from the skew holonomy Theorem of Olmos and Reggiani 
\cite{Olmos&R08}, based on preliminary work from our article
\cite{Agricola&F04}. Our result generalizes in some sense 
\cite[Thm 1.2]{Olmos&R08},
stating that the canonical connection of an irreducible naturally reductive
space ($\neq S^n,\, \R\P^n$ or a Lie group) is unique (i.\,e.~different
realizations as a naturally reductive
space induce the same canonical connection).
The case of  an adjoint representation
(excluded below) will be treated separately in Section \ref{ch:groups}.
\begin{thm}\label{kernel-theta}
Let $G\subsetneq \SO(n)$ be a connected Lie subgroup acting 
irreducibly on $\R^n$, and assume that $G$ does not act on $\R^n$ 
by its adjoint representation. 
Let $\m$ be a reductive complement of $\g$ inside $\so(n)$, 
$\so(n) = \g\oplus \m$. Consider the $G$-equivariant map 
\bdm
\Theta\, :=\, \mathrm{id}\otimes \mathrm{pr}_{\m} : \quad 
\Lambda^3(\R^n)\lra  \R^n\otimes \m, \quad
\Theta (T)\ =\ \sum_{i} e_i\ox \mathrm{pr}_{\m} (e_i\haken T).
\edm
Then  $\ker\Theta=\{0\}$, and hence the characteristic connection of a
$G$-structure on a Riemannian manifold $(M,g)$ is,
if existent, unique.
\end{thm}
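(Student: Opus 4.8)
\emph{Plan.} The plan is to translate injectivity of $\Theta$ into a statement about \emph{skew-torsion holonomy systems} and then invoke the classification behind the theorem of Olmos and Reggiani. First I would reformulate the kernel. Since $\Theta(T)=\sum_i e_i\ox\mathrm{pr}_{\m}(e_i\haken T)$ and the $e_i$ form a basis, $T\in\ker\Theta$ means precisely that $\mathrm{pr}_{\m}(e_i\haken T)=0$ for all $i$, i.e.
\bdm
\ker\Theta\ =\ \bigl\{\,T\in\Lambda^3(\R^n)\ :\ v\haken T\in\g\ \text{ for all }v\in\R^n\,\bigr\},
\edm
where $\Lambda^2(\R^n)$ is identified with $\so(n)$. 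I would assume, for contradiction, that this kernel contains a nonzero $T$.

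Next I would manufacture a skew-torsion holonomy system inside $\g$. Let $\h\subseteq\so(n)$ be the span of all contractions $v\haken T$ with $v\in\R^n$ and $T\in\ker\Theta$; by definition $\h\subseteq\g\subsetneq\so(n)$. Using the derivation identity $[A,v\haken T]=(Av)\haken T+v\haken(A\cdot T)$ for $A\in\g$, together with the fact that $\ker\Theta$ is a $\g$-submodule of $\Lambda^3(\R^n)$ (being the kernel of the $G$-equivariant map $\Theta$), one checks that $\h$ is bracket-closed and $\ad(\g)$-invariant, i.e.\ a nonzero ideal of $\g$. Its fixed space is $\g$-invariant, hence $0$ or $\R^n$ by irreducibility of $\g$, and it cannot be all of $\R^n$ since $T\neq0$ yields a nonzero $v\haken T\in\h$; thus $\h$ has no nonzero fixed vector, and $[\R^n,T,\h]$ is a skew-torsion holonomy system whose algebra is generated by the contractions of $T$.

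Then I would reduce to the irreducible case and apply the classification. Because $G$ is connected it fixes each isotypic component of $\R^n\big|_{\h}$ (there are finitely many), so irreducibility of $\g$ forces $\R^n$ to be $\h$-isotypic, $\R^n\cong W^{\oplus m}$ with $W$ a nontrivial irreducible $\h$-module; analysing the contractions $v\haken T$, which act only on the $W$-factor, should force the multiplicity $m=1$, so that $\h$ acts irreducibly. The skew-torsion holonomy theorem then leaves only two possibilities: either $\h=\so(n)$, or $\R^n$ is (isomorphic to) a compact simple Lie algebra with $\h$ acting by the adjoint representation and $T$ a multiple of the Cartan form. The first is excluded since $\h\subseteq\g\subsetneq\so(n)$. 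In the second, $\h=\ad(\R^n)$ is an ideal of $\g$; as a simple Lie algebra is complete ($\Der=\ad$) and its adjoint representation has trivial centralizer in $\so(n)$, the normalizer of $\h$ in $\so(n)$ equals $\h$, whence $\g=\h$ acts by its adjoint representation, contradicting the hypothesis. Hence $\ker\Theta=\{0\}$. Uniqueness of the characteristic connection then follows formally: any such connection has skew torsion $T$ with $\Theta(T)=\Gamma$ equal to the fixed intrinsic torsion, so two admissible torsions differ by an element of $\ker\Theta=\{0\}$.

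The hard part, I expect, is the reduction to an irreducible system: passing from the given irreducibility of $\g$ to irreducibility of the \emph{generated} algebra $\h$ (the actual hypothesis of Olmos--Reggiani), in particular ruling out higher multiplicity $m>1$ and disposing separately of the degenerate case where a minimal ideal of $\g$ is abelian (where one must check directly that the only $T$ with contractions landing in $\R J$ is $T=0$). Matching the ``adjoint'' alternative of the classification with the excluded hypothesis, through the normalizer computation above, is the second delicate point.
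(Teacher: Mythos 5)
Your overall strategy coincides with the paper's: reinterpret $\ker\Theta$ as the set of $3$-forms $T$ all of whose contractions $v\haken T$ lie in $\g$, turn a nonzero such $T$ into a skew(-torsion) holonomy system, and let the Olmos--Reggiani classification leave only two alternatives --- the full $\so(n)$, excluded because $G\subsetneq\SO(n)$, and an adjoint representation, excluded by hypothesis. Your normalizer computation ($N_{\so(n)}(\ad\R^n)=\ad\R^n$ because derivations of a simple Lie algebra are inner and the adjoint representation is absolutely irreducible) is correct and is indeed what is needed to pass from ``the generated algebra is adjoint'' to ``$\g$ is adjoint''.

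The step you yourself flag as the hard part is, however, a genuine gap as written --- and it is a self-inflicted one. By insisting that the group of the holonomy system be the algebra $\h$ generated by the contractions, you must prove that $\h$ acts irreducibly on $\R^n$; your reduction ``$\R^n\cong W^{\oplus m}$ and the contractions should force $m=1$'' is only asserted. Making it precise requires the splitting lemma for skew holonomy systems (if $\R^n$ decomposes into $\h$-invariant summands, then $T$ itself splits as a sum of $3$-forms on the summands, because the mixed components of $T$ vanish by invariance and total skewness) followed by an argument that isotypy forces each summand of $T$ to vanish when $m>1$; you would also have to handle your ``abelian ideal'' case separately. None of this is necessary: in the Olmos--Reggiani definition, the group of a skew holonomy system is \emph{any} connected subgroup of $\SO(V)$ whose Lie algebra contains the image of $\theta$ --- it need not be generated by the contractions. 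The paper therefore simply takes the triple $(\R^n,\theta,G)$ with the given $G$ and $\theta(X)=X\haken T$: irreducibility is then exactly the hypothesis on $G$, transitivity would force $G=\SO(n)$ by \cite[Thm 4.1]{Olmos&R08}, and the non-transitive irreducible case of the skew holonomy theorem says directly that $G$ itself is $\Ad H$ for the Lie group $H$ with Lie algebra $(\R^n, T(\cdot,\cdot,-))$, with no normalizer computation needed. If you keep your version, you must supply the splitting and multiplicity arguments in full; if you switch the group of the system from $\h$ to $G$, your proof collapses to the paper's.
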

\begin{proof}
An element $T\in\Lambda^3(\R^n)$ will be in $\ker\Theta$ if an only if
all $X\haken T$, identified with elements of $\so(n)$, lie in $\g$.
In the notation of \cite{Agricola&F04}, any $3$-form $T\in\Lambda^3(\R^n)$
generates a Lie algebra
\bdm
\g^*_T\ := \ \mathrm{Lie}\langle X\haken T \, |\, X\in \R^n \rangle,
\edm
and 
\bdm
\ker\Theta\ =\ T(\g,\R^n)\ :=\ \{T\in\Lambda^3(\R^n) \, |\,\g^*_T\subset \g\}.
\edm
In \cite{Olmos&R08}, a triple $(V,\theta,G)$ is called a 
\emph{skew holonomy system} if $V$ is an Euclidian vector space,
$G$ is a connected Lie subgroup of
$\SO(V)$, and $\theta: V\ra\g$ is a totally skew $1$-form
with values in $\g$, i.\,e.~$\theta(X)\in\g\subset\so(V)$ and
$\langle \theta(X)Y,Z\rangle $ defines
a $3$-form on $V$. Hence, we see that any $T\in\ker\Theta $  
defines a skew holonomy system with $V=\R^n$ and the given $G$ representation, 
and $\theta(X)= X\haken T$. Furthermore, this skew holonomy system
will be \emph{irreducible}, by assumption on the $G$-representation on $\R^n$.
By \cite{Agricola&F04}, \cite[Thm 4.1]{Olmos&R08}, $G$ cannot act transitively on the
unit sphere of $\R^n$, for then $G=\SO(V)$ would hold, and this case
was excluded by assumption. Thus, any $T\in\ker\Theta $ defines
a non-transitive irreducible skew holonomy system. By
the skew holonomy Theorem \cite[Thm 1.4]{Olmos&R08}, $\R^n$ will then
itself be a Lie algebra, with the bracket induced by $T$ 
($[X,Y]=T(X,Y,-)$), and $G=\Ad H$, where $H$ is the connected Lie group
 associated to the Lie algebra $\R^n$. This case having been
excluded by assumption, it follows that any $T\in\ker\Theta$ has to vanish.
\end{proof}
Let us look back at all $G$-structures modeled on the
four rank two symmetric spaces $SU(3)/SO(3)$, $\SU(3)$, $SU(6)/\Sp(3)$,
and $E_6/E_4$.  For the $5$-dimensional $\SO(3)$-representation, 
the injectivity of $\Theta$ can be established by elementary
methods \cite{frie}, \cite{ABF}. For $\SU(3)$, viewed as a symmetric space,
we are dealing with the adjoint representation excluded in
Theorem \ref{kernel-theta}, and indeed
the one-dimensional kernel of $\Theta$ was observed by Puhle in \cite{puh}.
For the irreducible representations of $\Sp(3)$ on $\R^{14}\cong V^{14}$ 
and  $F_4$ on $\R^{26}$,  Theorem 
\ref{kernel-theta} is applicable, hence $\ker \Theta=\{0\}$  and the
characteristic connection is unique in all situations where
at least one such connection exists. Together with the explicit decompositions
from Lemma \ref{decomp-reps}, we can summarize the result for
$\Sp(3)$-structures as follows:
\begin{cor}
An $\Sp(3)$ structure on a $14$-dimensional
Riemannian manifolds admits a  characteristic connection $\nabla^c$
if and only if the
$14$- , $90$- and $512$-dimensional parts of its 
intrinsic torsion vanish, and then it is unique.
\end{cor}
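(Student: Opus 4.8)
The plan is to read the statement off from the two ingredients already in place: the existence criterion of \cite{frie}, which says that an $\Sp(3)$ structure admits a characteristic connection precisely when its intrinsic torsion $\Gamma$ lies in $\im\Theta$, together with the injectivity of $\Theta$ furnished by Theorem \ref{kernel-theta}. The only genuine task is therefore to identify the image of $\Theta$ explicitly as a sub-$\Sp(3)$-module of $V^{14}\otimes V^{70}$, after which both the existence condition and the uniqueness assertion follow formally.

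To compute $\im\Theta$, I would argue representation-theoretically. The map $\Theta$ is $\Sp(3)$-equivariant with domain $\Lambda^3(V^{14})$ and codomain $V^{14}\otimes V^{70}$, whose irreducible decompositions are recorded in Lemma \ref{decomp-reps}. Substituting the decomposition of $\Lambda^3(V^{14})$ into that of the codomain exhibits $V^{14}\otimes V^{70}$ as the sum of the seven irreducibles $\sp(3)=V^{21}$, $V^{70}$, $V^{84}$, $V^{189}$, $V^{14}$, $V^{90}$, $V^{512}$. Their dimensions are all distinct, so these modules are pairwise non-isomorphic and each occurs with multiplicity one. By Schur's lemma the restriction of $\Theta$ to any irreducible summand of $\Lambda^3(V^{14})$ lands in the isotypic component of that summand in the codomain, which by multiplicity one is a single irreducible copy; injectivity (Theorem \ref{kernel-theta}) makes this restriction nonzero, hence an isomorphism onto that copy. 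Consequently $\im\Theta$ is exactly the submodule $\sp(3)\oplus V^{70}\oplus V^{84}\oplus V^{189}$ of $V^{14}\otimes V^{70}$, whose complement is $V^{14}\oplus V^{90}\oplus V^{512}$.

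With $\im\Theta$ in hand the corollary is immediate. The condition $\Gamma\in\im\Theta$ is equivalent to the vanishing of the projections of $\Gamma$ onto the complementary summands $V^{14}$, $V^{90}$, $V^{512}$, i.e.~of its $14$-, $90$- and $512$-dimensional parts; by the existence criterion of \cite{frie} this is precisely the condition for a characteristic connection to exist. Uniqueness is then immediate from the injectivity of $\Theta$ (Theorem \ref{kernel-theta}), as already observed before the statement of the corollary. I do not expect any serious obstacle here: once Lemma \ref{decomp-reps} and Theorem \ref{kernel-theta} are granted, the argument is pure multiplicity-one bookkeeping via Schur's lemma, and the only point requiring a moment's care is checking that the seven irreducibles are pairwise non-isomorphic, which is clear from their distinct dimensions.
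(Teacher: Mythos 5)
Your argument is correct and follows essentially the same route as the paper, which derives the corollary from the criterion of \cite{frie}, the decompositions in Lemma \ref{decomp-reps}, and the injectivity of $\Theta$ from Theorem \ref{kernel-theta}; the Schur's-lemma/multiplicity-one bookkeeping you spell out is exactly what the paper leaves implicit when it says the corollary follows ``together with the explicit decompositions from Lemma \ref{decomp-reps}.''
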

\begin{NB}
Even in cases where the $G$ action on $\R^n$ is not irreducible, a
modification of the proof of Theorem \ref{kernel-theta} might work. 
We leave it to the reader
to check this for example for the action of $\U(n)$ on $\R^{2n+1}$,
thus yielding the uniqueness (if existent) of a characteristic connection for
almost metric contact manifolds in all dimensions. Of course,
this was shown explicitely before in \cite{Friedrich&I1}.
\end{NB}
\begin{NB}
If the $\Sp(3)$-manifold $(M^{14},g)$ admits a characteristic connection
$\nabla^c$ with torsion $T\in\Lambda^3(M^{14})$, it satisfies 
$\nabla^c\Upsilon=0$ by the general holonomy principle. 
But for any $(3,0)$-tensor field $\Upsilon$,
\bdm
\nabla^c_V\Upsilon (X,Y,Z)\ =\ \nabla^g_V\Upsilon (X,Y,Z)
- \frac{1}{2}[\Upsilon \big(T(V,X),Y,Z\big) +\Upsilon \big(X, T(V,Y),Z\big) 
+ \Upsilon \big(X,Y, T(V,Z)\big)],
\edm
hence one concludes at once that $\nabla^c\Upsilon=0$ implies 
\be\label{nearly-int}
\nabla^g_V\Upsilon (V,V,V)\ = \ 0.
\ee
Such $\Sp(3)$-manifolds were called \emph{nearly integrable} by Nurowski
\cite{nur}, in analogy to nearly K\"ahler manifolds. However, one sees that
condition ($\ref{nearly-int}$) is not a restriction for the $\Sp(3)$ 
structure, making some of
the computations \cite[p.~11]{nur} unnecessary. In this paper,
we shall just speak of $\Sp(3)$ structures admitting a characteristic 
connection.
\end{NB}
\begin{lem}
Suppose that $(M^{14},g)$ is a Riemannian manifold with $\Sp(3)$-structure
admitting a characteristic connection $\nabla$ with torsion 
$T\in \Lambda^3(M^{14})$, and that the torsion is $\nabla$-parallel, $\nabla T=0$.
Then there exists a $\nabla$-parallel $2$-form $\Omega$.
\end{lem}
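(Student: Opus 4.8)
The plan is to argue via the holonomy principle. By definition the characteristic connection $\nabla$ is a connection on the $\Sp(3)$-reduction, so its holonomy group $\mathrm{Hol}(\nabla)$ lies in $\Sp(3)$ and in particular fixes the structure tensor $\Upsilon$; consequently $\nabla$-parallel tensor fields correspond bijectively to $\mathrm{Hol}(\nabla)$-invariant elements of the relevant $\Sp(3)$-module. The hypothesis $\nabla T=0$ means precisely that $\mathrm{Hol}(\nabla)$ fixes $T\in\Lambda^3(V^{14})$, and producing a parallel $2$-form amounts to exhibiting a nonzero $\mathrm{Hol}(\nabla)$-invariant element of $\Lambda^2(V^{14})\cong\so(14)=\sp(3)\oplus V^{70}$.

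First I would construct the candidate by transporting $T$ into $\Lambda^2$ equivariantly. By Lemma \ref{decomp-reps} and the splitting $\Lambda^2(V^{14})=\sp(3)\oplus V^{70}$, the modules $\Lambda^3(V^{14})$ and $\Lambda^2(V^{14})$ share exactly the summands $\sp(3)$ and $V^{70}$, so Schur's lemma furnishes an $\Sp(3)$-equivariant map $\Phi:\Lambda^3(V^{14})\to\Lambda^2(V^{14})$ that is an isomorphism on $\sp(3)\oplus V^{70}$ and annihilates $V^{84}\oplus V^{189}$. A concrete realization is contraction with the $\Sp(3)$-invariant generic $5$-form $\omega^5$, which is itself $\nabla$-parallel because $\mathrm{Hol}(\nabla)\subset\Sp(3)$:
\bdm
\Omega(X,Y)\ :=\ \sum_{a,b,c}\omega^5(X,Y,e_a,e_b,e_c)\,T(e_a,e_b,e_c).
\edm
Since $\Phi$ is equivariant it intertwines the holonomy representation, hence sends the parallel section $T$ to a $\nabla$-parallel $2$-form $\Omega=\Phi(T)$.

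The hard part will be nonvanishing of $\Omega$. By the previous step $\Omega\neq0$ if and only if $T$ has a nonzero component in $\sp(3)\oplus V^{70}$. Here I would exploit that $\Lambda^3(V^{14})$ contains no trivial $\Sp(3)$-summand (Lemma \ref{decomp-reps}), so a nonzero parallel $T$ forces $\mathrm{Hol}(\nabla)$ to be a proper subgroup of $\Sp(3)$; the statement then reduces to the representation-theoretic assertion that every subalgebra $\h\subset\sp(3)$ fixing a nonzero element of $\Lambda^3(V^{14})$ also fixes a nonzero element of $\Lambda^2(V^{14})$, which can be checked from the subgroup and branching data (for instance with LiE). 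Should $T$ happen to lie entirely in $V^{84}\oplus V^{189}$, one passes instead to a quadratic equivariant map $\Sym^2\Lambda^3(V^{14})\to\Lambda^2(V^{14})$ applied to $T\otimes T$, whose image is again $\nabla$-parallel; ruling out that this quadratic expression vanishes is the delicate point I expect to dominate the argument.
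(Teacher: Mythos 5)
Your argument is correct for the statement as literally written, but it takes a genuinely different route from the paper. The paper contracts the structure tensor $\Upsilon$ against the parallel data to obtain a $\nabla$-parallel vector field $\xi$ and sets $\Omega:=\xi\haken T$; you instead push $T$ itself into $\Lambda^2(V^{14})\cong\so(14)=\sp(3)\oplus V^{70}$ via an equivariant contraction with the invariant $5$-form $\omega^5$. Both constructions rest on the same holonomy principle, and your second paragraph already delivers everything the lemma formally asserts (existence of a $\nabla$-parallel $2$-form). The divergence is in the nonvanishing question you raise at the end: your $\Phi(T)$ vanishes exactly when $T\in V^{84}\oplus V^{189}$, and you are right that nothing excludes this; but note that the paper's one-line proof does not settle nonvanishing either --- indeed, since $V^{14}$ is a nontrivial irreducible $\Sp(3)$-module, any equivariant contraction of $\Upsilon$ alone gives $\xi=0$, so the paper's $\xi$ must also be built using $T$, and $\xi\haken T\neq 0$ is likewise unverified there. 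Two smaller caveats on your version: Schur's lemma only guarantees that contraction with $\omega^5$ is \emph{some} multiple of the projection onto each of $\sp(3)$ and $V^{70}$, so you would still need to check these multiples are nonzero before calling $\Phi$ an isomorphism on that part; and the quadratic fallback $\Sym^2\Lambda^3(V^{14})\to\Lambda^2(V^{14})$ is proposed but not carried out, so if nonvanishing is taken to be part of the claim, your proof (like the paper's) stops short at exactly that point. What your approach buys is an explicit, structure-independent formula for $\Omega$ and a clear representation-theoretic criterion for when it is nonzero; what the paper's approach buys is brevity and the auxiliary vector field $\xi$, which it reuses in the subsequent remark about $\Ric^\nabla$.
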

\begin{proof}
The symmetric $(3,0)$-tensor $\Upsilon$ induces by contraction a
$\nabla$-parallel vector field $\xi$,  hence
the $2$-form $\Omega := \xi\haken T$ will be $\nabla$-parallel as well.
\end{proof}
However, the $2$-form $\Omega$ will  be very
degenerate ($\Sp(7)$ is much larger than $\Sp(3)$) 
and thus it will not induce a symplectic structure. Observe
that $\Ric^\nabla$ will have vanishing eigenvalue in direction $\xi$.
\subsection{Topological constraints}
%
Let $\BSO(14)$ and $\BSp(3)$ be the classifying spaces of $\SO(14)$ 
and $\Sp(3)$ respectively. For a $14$-dimensional oriented Riemannian 
manifold $M^{14}$ we consider the classifying map of the frame bundle 
\bdm
  f:M^{14}\longrightarrow \BSO(14).
\edm
The existence of a topological $\Sp(3)$ structure is equivalent to the
existence of a lift $\tilde{f}$,  
\bdm
  \xymatrix{ &\BSp(3) \ar[d]^{\iota} \\
    M^{14} \ar[ur]^{\tilde{f}} \ar[r]^{f} & \BSO(14)}
\edm
Since the cohomology algebra of the space $\BSp(3)$ is generated 
by three elements in $H^4$, $H^8$ and $H^{12}$ (see Theorem 5.6.,  
Chapter III, \cite{mito}) we immediately obtain the following
\begin{thm}
If $M^{14}$ is a compact oriented Riemannian manifold with $\Sp(3)$ structure, then 
\begin{enumerate}
\item the Euler characteristic vanishes, $\chi(M^{14}) = 0$, \label{thm:cond:1}
\item $w_i(M^{14})=0$ for $i\neq 4,8,12$, where $w_i$ are the 
Stiefel-Whitney classes.\label{thm:cond:2}
\end{enumerate}
\end{thm}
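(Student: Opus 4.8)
The plan is to exploit the structure of the cohomology ring $H^*(\BSp(3);\Z)=\Z[q_4,q_8,q_{12}]$ together with the lift $\tilde f$ furnishing the $\Sp(3)$ structure. First I would observe that, since every generator of $H^*(\BSp(3))$ sits in degree $4$, $8$, or $12$, the graded pieces $H^i(\BSp(3))$ vanish whenever $i$ is not a nonnegative integer combination of $4$, $8$, and $12$—in particular $H^i(\BSp(3);\Z_2)=0$ for all odd $i$ and for $i\equiv 2\pmod 4$. The Stiefel--Whitney classes of $M^{14}$ are $w_i=\tilde f^*\iota^* w_i(\BSO(14))$, so each $w_i(M^{14})$ is pulled back from $H^i(\BSp(3);\Z_2)$; whenever that group vanishes, so does $w_i(M^{14})$. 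This immediately gives condition \eqref{thm:cond:2}, once I confirm that the only degrees $i\le 14$ surviving in $H^*(\BSp(3);\Z_2)$ are $i=0,4,8,12$.

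For condition \eqref{thm:cond:1}, the natural route is through the Euler class. The Euler characteristic of a closed oriented $14$-manifold equals $\langle e(TM),[M]\rangle$, where $e(TM)\in H^{14}(M;\Z)$ is the Euler class, itself the pullback $f^*e_{14}$ of the universal Euler class $e_{14}\in H^{14}(\BSO(14);\Z)$. Through the $\Sp(3)$ reduction this factors as $\tilde f^*(\iota^* e_{14})$, so it suffices to show that $\iota^* e_{14}=0$ in $H^{14}(\BSp(3);\Z)$. But $H^{14}(\BSp(3);\Z)=0$, since $14$ is not expressible as $4a+8b+12c$ with $a,b,c\ge 0$ (the achievable degrees up to $14$ are $0,4,8,12$, plus $16$ and beyond). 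Hence $e(TM)=0$ and $\chi(M^{14})=0$.

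The one step requiring genuine care—and what I expect to be the main obstacle—is passing from the integral computation $H^{14}(\BSp(3);\Z)=0$ to the vanishing of the mod-$2$ groups in the relevant odd and $\equiv 2\pmod 4$ degrees, because a priori torsion could create mod-$2$ classes not visible rationally. The clean resolution is that $\Sp(3)$ is a compact connected Lie group whose classifying space has \emph{torsion-free} integral cohomology, namely the polynomial algebra $\Z[q_4,q_8,q_{12}]$ on generators in even degree; this is precisely the content of the cited result of Mimura--Toda. Torsion-freeness, via the universal coefficient theorem, gives $H^i(\BSp(3);\Z_2)\cong H^i(\BSp(3);\Z)\otimes\Z_2$, so the mod-$2$ cohomology is concentrated in exactly the same degrees $4a+8b+12c$. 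Thus all the degrees I need to kill—every odd $i$, every $i\equiv 2\pmod 4$, and more generally every $i\notin\{0,4,8,12\}$ with $i\le 14$—carry no cohomology, integral or mod $2$, and the naturality squares for $w_i$ and for $e_{14}$ complete the argument.
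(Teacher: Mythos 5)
Your proposal is correct and follows essentially the same route as the paper, which simply cites $H^*(\BSp(3);\Z)=\Z[q_4,q_8,q_{12}]$ (Mimura--Toda) and declares the theorem immediate; you have merely filled in the implicit details, namely the factorization of $e(TM^{14})$ and $w_i(M^{14})$ through $H^*(\BSp(3))$ and the torsion-freeness needed to pass to $\Z_2$ coefficients. No discrepancy with the paper's argument.
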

\begin{NB}\label{nb:spin}
For example, $S^{14}$ and any product of spheres $S^n\x S^m$ with $m+n=14$
and $m,n$ both even cannot carry an $\Sp(3)$ structure, since the
Euler characteristic does not vanish.
The requirement  $w_1(M^{14})=w_2(M^{14})=0$ means that any manifold
with $\Sp(3)$ structure is orientable and admits a spin structure. 
Since $\Sp(3)$ is simply connected, the inclusion 
$\Sp(3)\subset\SO(14)$ admits a unique lift to $\Spin(14)$. Thus a 
$\Sp(3)$ structure defines a \emph{unique} spin structure.
\end{NB}
Now we are going to construct some  examples. In general, let $M^n$ be a
manifold with a fixed $G$-reduction $\mathcal{R}$ of the frame bundle. 
For any $G$-representation $\kappa$ on
$E_o$ we consider the associated bundle 
\bdm
 E\ = \ \mathcal{R}\times_\kappa E_0 \stackrel{\pi}{\rightarrow}M^n \ .
\edm
The tangent bundle of the manifold $E$ splits into a horizontal and vertical
part,
\bdm
T(E) \ = \ T^v(E) \, \oplus \, T^h(E) \ .
\edm
Since
\bdm
T^h(E) \ = \ \pi^*(T(M^n)) \ = \ \pi^*(\mathcal{R}) \times_G \R^n \, , \quad
\mbox{and} \quad
T^v(E) \ = \ \pi^*(E) \ = \ \pi^*(\mathcal{R}) \times_G E_o \,
\edm
we obtain the following
\begin{lem}\label{lem:vb}
 As a manifold, $E$ admits a $G$ structure $TE=\pi^*(R)\times_G(E_0\oplus\R^n)$.
\end{lem}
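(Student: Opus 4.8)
The plan is to produce the claimed $G$ structure on $E$ explicitly by exhibiting a reduction of the frame bundle $\mathcal{F}(E)$ to $G$, using the splitting of $T(E)$ recorded just above the statement. The key observation is that the two summands identified there,
\bdm
T^h(E) \ = \ \pi^*(\mathcal{R}) \times_G \R^n \quad\mbox{and}\quad
T^v(E) \ = \ \pi^*(\mathcal{R}) \times_G E_o,
\edm
are both associated to the \emph{same} principal $G$-bundle, namely the pullback $\pi^*(\mathcal{R})$ over $E$. First I would note that whenever two vector bundles are associated to a common principal $G$-bundle $P$ via representations $\kappa_1$ on $W_1$ and $\kappa_2$ on $W_2$, their direct sum is associated to $P$ via the direct-sum representation $\kappa_1\oplus\kappa_2$ on $W_1\oplus W_2$; this is an elementary but essential functoriality of the associated-bundle construction. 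Applying this with $P=\pi^*(\mathcal{R})$, $W_1=\R^n$, $W_2=E_0$ yields
\bdm
T(E)\ = \ T^h(E)\oplus T^v(E)\ = \ \pi^*(\mathcal{R})\times_G (\R^n\oplus E_0),
\edm
which is precisely the asserted description of the tangent bundle.

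The remaining point is to argue that this isomorphism genuinely encodes a $G$ \emph{reduction} of the frame bundle of the $(n+\dim E_0)$-dimensional manifold $E$, not merely an abstract bundle isomorphism. To see this I would fix once and for all a metric on the model space $\R^n\oplus E_0$ making the $G$-action orthogonal — this is possible because $G$ is compact (indeed $G\subset\Orth(n)$ acts orthogonally on $\R^n$, and any representation of a compact group admits an invariant inner product, so $E_0$ can be equipped with a $G$-invariant metric and the sum taken orthogonal). The bundle of $G$-frames of $T(E)$ adapted to the associated-bundle structure is then a principal $G$-subbundle of the orthonormal frame bundle of $E$, and the composite map into the full $\GL(n+\dim E_0)$ frame bundle realizes the desired reduction. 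Concretely, a local section of $\pi^*(\mathcal{R})$ determines, via the identification above, a local frame of $T(E)$, and transition functions between such frames take values in $G$ by construction.

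The step I expect to be the only genuine obstacle is not the algebra but the careful justification of the identity $T^h(E)=\pi^*(T(M^n))=\pi^*(\mathcal{R})\times_G\R^n$, i.e.\ that the horizontal distribution of $T(E)$ really is the pullback of $T(M^n)$ and hence associated to $\pi^*(\mathcal{R})$ through the defining representation on $\R^n$; strictly this requires a choice of connection on $\mathcal{R}$ to split $T(E)$ into horizontal and vertical parts. However, since the statement in the excerpt already \emph{records} both identifications for $T^h(E)$ and $T^v(E)$ as established facts, I may assume this splitting and simply invoke it. Granting that, the proof reduces to the one-line functoriality argument above: the whole content of the Lemma is that the direct sum of two bundles associated to $\pi^*(\mathcal{R})$ is again associated to $\pi^*(\mathcal{R})$, via the direct sum of the two representations, and that the resulting structure group $G\subset\Orth(n+\dim E_0)$ gives the advertised $G$ structure on $E$.
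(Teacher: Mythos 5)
Your argument is correct and coincides with the paper's own (essentially one-line) justification: the paper likewise takes the splitting $T(E)=T^v(E)\oplus T^h(E)$ together with the identifications of both summands as bundles associated to $\pi^*(\mathcal{R})$, and concludes by the direct-sum functoriality of the associated-bundle construction. Your additional remarks on the choice of connection and on realizing the result as a genuine reduction of the frame bundle only make explicit what the paper leaves implicit.
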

With Theorem \ref{thm:maxsub} in Appendix \ref{ch:ApSubGroups} we get 
$V^{14}\ = \ \Delta_5\, \oplus\, \p^5\, \oplus\, \p^1$ and thus receive
\begin{cor}
Any oriented $1$-dimensional bundle over a $13$-dimensional Riemannian
manifold with an $\Sp(2)$ structure of type $\Delta_5 \oplus \p^5$ admits a
$\Sp(2) \subset \Sp(3)$ structure.
\end{cor}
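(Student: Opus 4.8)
The plan is to realise the total space of the given line bundle as a bundle associated to the $\Sp(2)$-reduction on the base, apply Lemma \ref{lem:vb}, and then match the resulting fibre representation with $V^{14}$ by means of Theorem \ref{thm:maxsub}. Write $N^{13}$ for the $13$-dimensional base and $\mathcal{R}\subset\mathcal{F}(N^{13})$ for its $\Sp(2)$-reduction; the hypothesis that this structure is of type $\Delta_5\oplus\p^5$ means precisely that $\R^{13}\cong\Delta_5\oplus\p^5$ as $\Sp(2)$-modules, with $\dim\Delta_5=8$ and $\dim\p^5=5$. First I would observe that an oriented $1$-dimensional real vector bundle has structure group reducible to $\SO(1)=\{e\}$ and is hence trivial, so the given bundle $E$ is isomorphic to $N^{13}\times\R$. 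This allows me to present it as the associated bundle to the trivial one-dimensional $\Sp(2)$-representation $\p^1$, namely $E=\mathcal{R}\times_{\Sp(2)}\p^1$, which places us exactly in the setting of Lemma \ref{lem:vb} with $G=\Sp(2)$, $M^n=N^{13}$, and $E_0=\p^1$.

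Applying that Lemma then equips the $14$-dimensional manifold $E$ with an $\Sp(2)$ structure whose tangent bundle is
\[
TE\ =\ \pi^*(\mathcal{R})\times_{\Sp(2)}(E_0\oplus\R^{13})\ =\ \pi^*(\mathcal{R})\times_{\Sp(2)}(\p^1\oplus\Delta_5\oplus\p^5).
\]
The crucial step is to recognise the fibre as a copy of $V^{14}$: Theorem \ref{thm:maxsub} provides the branching $V^{14}\big|_{\Sp(2)}=\Delta_5\oplus\p^5\oplus\p^1$, so that $\p^1\oplus\Delta_5\oplus\p^5\cong V^{14}$ as $\Sp(2)$-modules and therefore $TE\cong\pi^*(\mathcal{R})\times_{\Sp(2)}V^{14}$. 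Since $\Sp(2)$ acts on this fibre exactly as the restriction along the embedding $\Sp(2)\subset\Sp(3)\subset\SO(14)$ of the defining $\Sp(3)$-representation on $V^{14}$, extending the structure group of $\pi^*(\mathcal{R})$ from $\Sp(2)$ to $\Sp(3)$ yields an $\Sp(3)$-subbundle of the frame bundle $\mathcal{F}(E)$ whose reduction to $\Sp(2)$ is $\pi^*(\mathcal{R})$ itself. This is by definition an $\Sp(2)\subset\Sp(3)$ structure on $E$, as claimed.

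The two inputs (Lemma \ref{lem:vb} and Theorem \ref{thm:maxsub}) do essentially all the work, so the argument is mostly an assembly of identifications; the only genuinely delicate point I anticipate is the final bookkeeping, namely checking that the $\Sp(2)$-module isomorphism furnished by Theorem \ref{thm:maxsub} is compatible with the specific embedding $\Sp(2)\subset\Sp(3)$, so that the structure-group extension really lands in $\Sp(3)$ rather than merely in $\SO(14)$. Since this compatibility is exactly what Theorem \ref{thm:maxsub} records, no further representation-theoretic computation should be needed, and the proof reduces to spelling out the associated-bundle construction.
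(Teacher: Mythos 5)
Your argument is correct and follows exactly the route the paper takes: the paper's entire proof is the one-line observation that Theorem \ref{thm:maxsub} gives the branching $V^{14}\big|_{\Sp(2)}=\Delta_5\oplus\p^5\oplus\p^1$, which combined with Lemma \ref{lem:vb} yields the claim. You have merely spelled out the associated-bundle bookkeeping that the paper leaves implicit, and your added remarks (triviality of the oriented line bundle, compatibility of the module identification with the embedding $\Sp(2)\subset\Sp(3)$) are accurate.
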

Let us consider a $5$-dimensional manifold $M^5$ with $\Spin(5)\cong\Sp(2)$
structure as well as  the corresponding spinor bundle. It is a $13$-dimensional
manifold and  Lemma \ref{lem:vb} gives us the needed $\Sp(2)$ structure on it.
We summarize the result, 
\begin{exa}
 Any oriented $1$-dimensional bundle over the spinor bundle of a
 $5$-dimensional 
spin manifold admits a $\Sp(2)\subset\Sp(3)$ structure.
\end{exa}
Taking a $8$-dimensional manifold with a $\Spin(5) = \Sp(2)$ structure
$\mathcal{R}$ we consider $M^{13}=\mathcal{R}\times_{\Sp(2)}\p^5$. Again we
have $T(M^{13})= \mathcal{R}\times_{\Sp(2)}(\Delta_5\oplus\p^5)$, leading to a 
$\Sp(3)$ structure on any $S^1$ bundle over $M^{13}$.
\begin{exa}
 Any oriented $1$-dimensional bundle over the associated bundle 
$M^{13}=\mathcal{R}\times_{\Sp(2)}\p^5$ of a
 $8$-dimensional manifold $X^8$ 
with $\Sp(2)$ structure $\mathcal{R} \ra X^8$ admits a $\Sp(2)\subset\Sp(3)$ 
structure.
\end{exa}
The above examples of $\Sp(3)$ spaces being fibrations over special 
smaller dimensional manifolds used the subgroup $G=\Sp(2)\subset\Sp(3)$ 
as well as its decomposition of $V^{14}$. We list the maximal connected 
subgroups of $\Sp(3)$ and their decompositions of $V^{14}$ in Appendix 
\ref{ch:ApSubGroups}. Particularly interesting are $G=\U(3)$ and $G=SO(3)$.
\begin{exa}
 A special $9$-dimensional real vector bundle over a $5$-dimensional 
manifold equipped with an irreducible $\SO(3)$ structure admits a 
$\SO(3)\subset\Sp(3)$ structure (see \cite{ABF}).
\end{exa}
\begin{exa}
 A special $8$-dimensional real vector bundle over a $6$-dimensional 
hermitian manifold admits a $\U(3)\subset\Sp(3)$ structure.
\end{exa}
%
%
\section{$\Sp(3)$ structures and other $G$ structures on Lie groups}
\label{ch:groups}
%
The first $14$-dimensional homogeneous space that comes to  mind 
(besides $S^{14}$) is presumably the Lie group $G_2$. We will devote
this section to the question whether  $G_2$ carries a natural
$\Sp(3)$ structure. Since it seems that the 
topic has not been treated before,
we shall start with some general comments on $G$ structures on Lie groups.

Let $G$ be a connected compact Lie group with a biinvariant metric $g$,
and $K\subset G$ a connected subgroup of $G$ whose Lie algebra $\k$
decomposes into center $\z$ and simple ideals $\k_i$, 
i.\,e.~$\k=\z\oplus \k_0\oplus\ldots\oplus \k_r$. 
Set $\a:=\k^\perp$, hence $\g=\a\oplus \k$. We view $G$
as the homogeneous space $G\x K/\Delta(K)$, where
$\Delta K := \{(k,k)\, : \ k\in K\}$. D'Atri and Ziller  proved
in \cite[p.~9]{DAtri-Ziller79} that the family of left invariant metrics 
on $G$ defined by ($\alpha,\alpha_1,\ldots, \alpha_r>0$, $h$ 
any scalar product on $\z$)
\be\label{inv-metrics}
\langle \, ,\, \rangle\ :=\ \alpha \cdot g\big|_{\a} \oplus
h\big|_{\z}\oplus \alpha_1\cdot g\big|_{\k_1}\oplus \ldots\oplus
\alpha_r\cdot g\big|_{\k_r}
\ee
is naturally reductive for the homogeneous space $G\x K/\Delta(K)$ in the following sense: Write $\g\oplus\k = \Delta \k \oplus \p$, then $\p$ is 
isomorphic (as a vector space) to $T_e (G\x K/\Delta(K))\cong \g$,
but with an isomorphism (and thus a commutator) 
depending on the parameters $\alpha,\alpha_i$. The metric then satisfies
\bdm
\langle [X,Y]_\p,Z\rangle + \langle Y, [X,Z]_\p\rangle \ =\ 0 \  \ \ 
\forall\  X,Y,Z\in\p.
\edm
In the special case that $K$ is chosen to be $G$, $\a=0$ and the metric
$\langle \, ,\, \rangle$ is precisely a biinvariant metric on $G$. 

By a theorem of Wang \cite{KN1}, invariant metric
connections $\nabla$ on $G$ (still with respect to its realization
as a naturally reductive space) are in bijective 
correspondence with linear maps
$\Lambda: \, {\p}\ra\so(\p)$ that are equivariant under 
the isotropy representation. As described in 
\cite[Lemma 2.1, Dfn 2.1]{Agricola03},
the torsion $T$ of $\nabla$ will be totally skew-symmetric
if and only if $\Lambda(X)X=0$ for all $X\in\p$ (and this
condition is well-known to be equivalent to the fact that
the geodesics of $\nabla$ coincide with the geodesics of the canonical connection, \cite[Prop. 2.9, Ch.X]{KN2}). Thus, one can give immediately
a one-parameter family of invariant metric connections $\nabla^t$ 
with skew torsion, namely the one defined by $\Lambda(X)Y=t [X,Y]_\p$
that was investigated in detail in \cite{Agricola03}. For $t=0$,
$\nabla^t$ has holonomy $K$, thus we can summarize: 
\begin{prop}
Let $G$ be a connected compact Lie group, $K\subset G$ a connected subgroup,
$G\cong G\x K/\Delta K$ as a reductive homogeneous space. 
For any parameters $\alpha,\alpha_1,\ldots, \alpha_r>0$,
the left invariant metric $\langle \, ,\, \rangle$  on $G$ defined by
$(\ref{inv-metrics})$ is naturally reductive and admits an 
invariant metric connection  with skew torsion and holonomy $K$. 
\end{prop}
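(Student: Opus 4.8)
The plan is to verify that the metric $\langle\,,\,\rangle$ defined by $(\ref{inv-metrics})$ is indeed naturally reductive and carries the claimed connection, reducing everything to the D'Atri--Ziller result and to Wang's theorem. First I would recall the realization $G\cong G\x K/\Delta K$: the isotropy group is $\Delta K$, and the reductive complement $\p$ is specified abstractly by the splitting $\g\oplus\k=\Delta\k\oplus\p$. The key point, already recorded in the excerpt, is that the identification $\p\cong\g\cong T_e(G\x K/\Delta K)$ carries a commutator $[\,,\,]_\p$ depending on the parameters $\alpha,\alpha_i$, and that D'Atri and Ziller (\cite{DAtri-Ziller79}) proved the metric $(\ref{inv-metrics})$ satisfies the naturally reductive compatibility condition
\bdm
\langle [X,Y]_\p,Z\rangle + \langle Y,[X,Z]_\p\rangle\ =\ 0\qquad \forall\ X,Y,Z\in\p.
\edm
So the first half of the statement is immediate by quoting this result; I would not reprove it.

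For the connection, the plan is to invoke Wang's theorem (\cite{KN1}) as stated in the excerpt: invariant metric connections correspond bijectively to isotropy-equivariant maps $\Lambda:\p\ra\so(\p)$. I would exhibit the explicit family $\Lambda(X)Y:=t\,[X,Y]_\p$ and, for the statement at hand, focus on the single value making the holonomy equal to $K$. First I would check that $\Lambda(X)Y=t[X,Y]_\p$ really takes values in $\so(\p)$: this is exactly the naturally reductive condition above, which says $\langle[X,Y]_\p,Z\rangle$ is skew in $Y,Z$, i.e.~$\ad_X^\p\in\so(\p)$. Equivariance under the isotropy representation follows from the Jacobi identity for $[\,,\,]_\p$. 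Then, following \cite[Lemma 2.1, Dfn 2.1]{Agricola03}, the torsion of the resulting $\nabla^t$ is totally skew because $\Lambda(X)X=t[X,X]_\p=0$; this is the promised invariant metric connection with skew torsion.

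It remains to identify the holonomy, and here I would single out the distinguished member of the family rather than the generic one. The canonical connection of the reductive homogeneous space $G\x K/\Delta K$ corresponds to $\Lambda\equiv 0$, and its holonomy is the image of the isotropy representation, i.e.~$K$ acting on $\p\cong\g$. Thus the $\nabla^t$ with the correct holonomy $K$ is the canonical connection, recovered at $t=0$ in the notation of the excerpt (the statement ``For $t=0$, $\nabla^t$ has holonomy $K$'' is exactly this). Concretely, I would argue that the canonical connection is the one whose parallel transport is given by the isotropy action, so its holonomy group is contained in $K$, and equals $K$ when $G\x K/\Delta K$ is presented with $K$ acting effectively on $\p$. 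Assembling these pieces yields an invariant metric connection with skew torsion and holonomy $K$ for every admissible choice of parameters, which is the assertion.

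The main obstacle I anticipate is the pinning down of the holonomy to be exactly $K$ rather than merely contained in it. The torsion-skewness and the naturally reductive property are essentially bookkeeping once the D'Atri--Ziller commutator $[\,,\,]_\p$ is in hand, but verifying that the isotropy representation of $\Delta K$ on $\p$ has holonomy group precisely $K$ requires care: one must confirm that the $K$-action on $\p$ is faithful (so no collapse of the holonomy) and that the parameters $\alpha,\alpha_i>0$ do not accidentally enlarge the stabilizer. Because the deformation only rescales $g$ on the summands $\a,\z,\k_i$ by positive constants and does not change the underlying $K$-module structure of $\p\cong\g$, I expect the holonomy to remain $K$ for all admissible parameters, but this module-theoretic invariance under the rescaling is the step I would write out most carefully.
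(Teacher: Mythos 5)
Your proposal follows essentially the same route as the paper: natural reductivity is quoted from D'Atri--Ziller, the connection is produced via Wang's theorem from $\Lambda(X)Y=t[X,Y]_\p$ with skew torsion following from $\Lambda(X)X=0$, and the holonomy claim is settled by taking the canonical connection at $t=0$. Your closing concern about pinning the holonomy down to exactly $K$ (rather than a proper subgroup, via the Ambrose--Singer generation of the holonomy algebra from $\mathrm{proj}_{\k}[\p,\p]$ and its iterates) is legitimate and is in fact glossed over in the paper as well, so it does not distinguish your argument from theirs.
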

In general, this
is all  we can say; in particular, we do not know about other systematic
constructions of interesting $K$ structures on a Lie group $G$, for example,
if $K$ is not a subgroup of $G$. 
 We shall now investigate further the case $K=G$.
First, we can answer the question on $G_2$ we started with:
\begin{NB}
Since $G_2$ is simple, there are no center nor non-trivial ideals that would allow
for a deformation of the metric, hence $\langle \, ,\, \rangle$ has to be
a multiple of the negative of the Killing form of $G_2$.
$\Sp(3)$ is not a subgroup of $G_2$, but its maximal \emph{simple} subgroup 
$\SU(3)$ (see Appendix \ref{ch:ApSubGroups}) is also a  maximal 
subgroup of $G_2$. However, they  are not conjugate inside $\SO(14)$; this is easiest
seen by computing the branching of the $14$-dimensional representation of $\Sp(3)$
resp.~$G_2$ to their resp.~subgroups  $\SU(3)$; It turns out that
these do not coincide. The smaller subgroups do not seem to be very interesting.
We conclude that \emph{$G_2$ does not carry an $\SU(3)\subset \Sp(3)$ structure
of the type described before}. 
\end{NB}
Going back to the general case $K=G$, we are now in the situation that
$\langle \, ,\, \rangle$ is  a biinvariant metric on $G$;
an affine connection $\nabla$ is a bilinear map $\Lambda:\ \g\ra \gl(\g)$,
with $\nabla_X Y = \Lambda(X)Y$. Alternatively, it is sometimes
more useful to formulate the properties of $\Lambda$ in terms of its
dual bilinear map $\lambda:\ \g\x\g\ra\g, \ \lambda(X,Y):=\Lambda(X)Y$.
\subsection*{Characteristic vs. canonical vs. biinvariant connections on
Lie groups}
We begin by clarifying the different notions of `interesting' connections
on compact connected  Lie groups (still with a biinvariant 
metric) and their relations.

In \cite{Laquer92a}, Laquer defined a \emph{biinvariant connection} 
on a Lie group
$G$ as any $(G\x G)$-invariant connection on the symmetric space 
$G\x G/ \Delta G$, as described through Wang's Theorem in \cite{KN2}. 
The connection $\nabla^\lambda$
defined by a bilinear map  $\lambda:\ \g\x\g\ra\g$ will be biinvariant
if and only if \cite[Thm 6.1]{Laquer92a}
\bdm
\lambda  (\Ad_gX,\Ad_g Y)\  =\ \Ad_g\lambda(X,Y)\quad\forall g\in G .
\edm
Alternatively, one often uses the adjoint map 
$\Lambda: \g\ra \End\,\g,\ \Lambda_X (Y)=\lambda(X,Y) $,
for which the property then reads $\Lambda_{\Ad_g X} = \Ad_g
\Lambda_X \Ad_g^{-1}$; of course, $\Lambda$ is just the map used in
Wang' Theorem.
Observe that the set of biinvariant connections forms a vector space,
so uniqueness is to be expected at best up to a scalar, and that the notion
does not depend on the metric. Evidently, $\lambda(X,Y)=c [X,Y]$ is
always a biinvariant connection.
\begin{lem}
The following conditions for a biinvariant connection $\nabla^\lambda$,
on a Lie group $(G,g)$ with biinvariant metric are equivalent:
\begin{enumerate}
\item $\Lambda_V\in\so(\g)$ for any $V\in\g$, i.\,e.
$g(\Lambda_V X,Y)+ g(X,\Lambda_V Y)\ =\ 0$;
\item $\nabla^\lambda$ is metric; 
\item The torsion $T^\lambda(X,Y,Z)$ of $\nabla^\lambda$ 
is skew symmetric, i.\,e.~$T\in\Lambda^3(\g)$.
\end{enumerate}
\end{lem}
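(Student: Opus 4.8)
The plan is to prove the chain of equivalences $(1)\Leftrightarrow(2)\Rightarrow(3)\Rightarrow(1)$, exploiting the special structure of biinvariant connections on a Lie group with biinvariant metric. The equivalence $(1)\Leftrightarrow(2)$ is essentially definitional: a connection $\nabla^\lambda$ is metric precisely when $V g(X,Y)=g(\nabla^\lambda_V X,Y)+g(X,\nabla^\lambda_V Y)$, and since we work with left-invariant vector fields on which $g$ is constant, the left-hand side vanishes, leaving exactly the condition $g(\Lambda_V X,Y)+g(X,\Lambda_V Y)=0$, i.e.\ $\Lambda_V\in\so(\g)$. So the content lies in relating these to the skew-symmetry of the torsion.

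First I would set up notation: the torsion is $T^\lambda(X,Y)=\nabla^\lambda_X Y-\nabla^\lambda_Y X-[X,Y]=\Lambda_X Y-\Lambda_Y X-[X,Y]$, and its associated $(3,0)$-tensor is $T^\lambda(X,Y,Z)=g(T^\lambda(X,Y),Z)$. The implication $(2)\Rightarrow(3)$ is the heart of the matter and is where I expect the main obstacle. One knows $T^\lambda$ is automatically skew in its first two slots $X,Y$; metricity $(1)$ must be used to promote this to full skew-symmetry in all three arguments, i.e.\ to force $T^\lambda(X,Y,Z)=-T^\lambda(X,Z,Y)$. The key input here is the biinvariance of the metric, $g([X,Z],Y)+g(Z,[X,Y])=0$ (ad-invariance), together with metricity applied to the $\Lambda$-terms. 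Writing out $T^\lambda(X,Y,Z)+T^\lambda(X,Z,Y)$ and substituting $g(\Lambda_X Y,Z)=-g(Y,\Lambda_X Z)$ from $(1)$, the $\Lambda$-contributions should collapse against the ad-invariance of the bracket terms, yielding zero. The delicate point is bookkeeping the six $\Lambda$-terms and three bracket-terms and confirming they cancel in pairs; this is the step I would carry out most carefully.

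For the reverse direction $(3)\Rightarrow(1)$, I would argue that if $T^\lambda$ is totally skew, then in particular the canonical one-parameter family $\Lambda(X)Y=t[X,Y]_\p$ identified earlier in the excerpt (with $\Lambda(X)X=0$) suggests comparing $\nabla^\lambda$ against the metric Levi-Civita connection $\nabla^g$, whose Koszul-type difference is $\nabla^\lambda_X Y=\nabla^g_X Y+\tfrac12 T^\lambda(X,Y,-)^\sharp$ once $T^\lambda$ is known to be a $3$-form. Since $\nabla^g$ is metric and adding a skew-symmetric $(1,2)$-tensor built from a $3$-form preserves metricity, $\nabla^\lambda$ is metric, giving $\Lambda_V\in\so(\g)$. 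Alternatively, and perhaps more cleanly, I would run the computation of $(2)\Rightarrow(3)$ in reverse: the same algebraic identity relating $g(\Lambda_V X,Y)+g(X,\Lambda_V Y)$ to the symmetric part of the torsion shows that total skew-symmetry of $T^\lambda$ forces the symmetric part to vanish.

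The main obstacle, as noted, is the direction $(2)\Rightarrow(3)$, and specifically whether metricity \emph{alone} suffices or whether the biinvariance of $g$ is genuinely needed. I expect biinvariance to be essential, since it is what couples the bracket $[X,Y]$ appearing in the torsion to the metric; without ad-invariance the bracket terms would not cancel. I would therefore make explicit use of the hypothesis that $g$ is biinvariant (equivalently, $\ad_X\in\so(\g)$ for all $X$) at precisely this step, and flag that this is where the Lie-group structure, as opposed to a general reductive homogeneous space, enters decisively.
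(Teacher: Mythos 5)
Your equivalence $(1)\Leftrightarrow(2)$ is fine and is exactly what the paper says (it is ``immediate''). The trouble lies in both directions linking these to $(3)$, and your own computation, had you carried it out, would have exposed it. Writing $C(X,Y,Z):=g(\lambda(X,Y),Z)$, one finds
\[
T^\lambda(X,Y,Z)+T^\lambda(X,Z,Y)=\bigl[C(X,Y,Z)+C(X,Z,Y)\bigr]-\bigl[C(Y,X,Z)+C(Z,X,Y)\bigr]-\bigl[g([X,Y],Z)+g([X,Z],Y)\bigr].
\]
The last bracket vanishes by ad-invariance of $g$ alone, and the first by metricity applied with $V=X$; but applying metricity to the middle bracket (with $V=Y$ and $V=Z$) leaves $C(Y,Z,X)+C(Z,Y,X)=2g(\lambda^s(Y,Z),X)$, where $\lambda^s$ is the symmetric part of $\lambda$. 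So metricity plus biinvariance of $g$ only give $T^\lambda(X,Y,Z)+T^\lambda(X,Z,Y)=2g(\lambda^s(Y,Z),X)$: the six $\Lambda$-terms do \emph{not} cancel in pairs, and $(2)\Rightarrow(3)$ requires the additional input $\lambda^s=0$. In the paper this input is only available later, from Laquer's classification used in the proof of Theorem \ref{conn-on-lie-groups}; the elementary observation that an equivariant symmetric $\lambda^s$ with $\lambda^s_V\in\so(\g)$ must vanish (a trilinear form symmetric in its first two and antisymmetric in its last two slots is zero) shows only that $(1)$ and $(3)$ \emph{together} force $\lambda^s=0$, not that $(1)$ alone does.

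The direction $(3)\Rightarrow(1)$ is worse. Your first argument is circular: the decomposition $\nabla^\lambda=\nabla^g+\tfrac12 T^\lambda$ is the \emph{characterization} of metric connections with totally skew torsion, so it cannot be assumed before metricity is established. The torsion only sees the antisymmetric part of $\lambda$: since $T^\lambda(X,Y)=2\lambda^a(X,Y)-[X,Y]$ and $g([X,Y],Z)$ is already a $3$-form, condition $(3)$ is exactly $\lambda^a_V\in\so(\g)$ and is completely blind to $\lambda^s$. The paper itself supplies the counterexample in the proof of Theorem \ref{conn-on-lie-groups}: Laquer's symmetric intertwiner $\eta$ on $\su(n)$, $n\ge 3$, defines a biinvariant connection whose torsion is $-[X,Y]$ --- a genuine $3$-form for the biinvariant metric --- and which is shown there \emph{not} to be metric. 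Hence $(3)\Rightarrow(1)$ cannot be closed by any argument, and your ``alternative'' of running the computation backwards inherits the same residual term. For comparison, the paper's own proof is a one-line assertion that the formula $T^\lambda(X,Y)=\lambda(X,Y)-\lambda(Y,X)-[X,Y]$ ``shows the equivalence of (1) and (3)''; it glosses over precisely the point where your proposal gets stuck. The defensible statement is $(1)\Leftrightarrow(2)$, with $(1)\Leftrightarrow(3)$ holding only once one knows $\lambda^s=0$.
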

\begin{proof}
The equivalence of (1) and (2) is immediate (for any metric).
One checks that $\nabla^\lambda$ has torsion and curvature transformation
\bdm
T^\lambda(X,Y)\ =\ \lambda(X,Y)-\lambda(Y,X)-[X,Y],\quad
R^\lambda(X,Y)\ =\ [\Lambda_X,\Lambda_Y]- \Lambda_{[X,Y]},
\edm
which shows the equivalence of (1) and (3) for biinvariant metrics.
Observe that $\Lambda_V$ will not, in general, be a representation;
rather, the second formula shows that this is equivalent to $\nabla^\lambda$
being flat.
\end{proof}
On the other hand, the \emph{canonical connection} $\nabla^c$ of a reductive
homogeneous space $M=\tilde G/\tilde K$ is, by definition, the unique 
connection induced from the $\tilde K$-principal fibre bundle 
$\tilde G\ra \tilde G/\tilde K$ (alternatively:
the $\nabla^c$-parallel tensors are exactly the $\tilde G$-invariant ones).
A priori, it depends on the choice of a reductive complement $\tilde\m$ of
$\tilde\k$ in $\tilde\g$, since it has torsion $T^c(X,Y)=-[X,Y]_{\tilde\m}$; 
for a Lie group (i.\,e. $\tilde G= G\x G$, 
$\tilde K = \Delta G$), it turns out that, unlike for naturally reductive 
spaces (cf.~Section \ref{subsection:types} and
the comments to Thm 2.1), each choice of a complement of
$\Delta \g\subset \g\oplus\g$
induces a different
canonical connection. The easiest way to see this is to construct (some of)
them explicitely. One checks that every space ($t\in\R$)
\bdm
\m_t\  :=\ \{X_t :=(tX, (t-1)X)\in \g\oplus\g\, :\ X\in\g \}\ \cong\ \g
\edm
defines a reductive complement of $\Delta \g$. One then computes
the decomposition of any commutator $[X_t,Y_t]$ in its $\Delta \g$- and
$\m_t$-part,
\bdm
[X_t,Y_t]\ =\ (t^2[X,Y], (t-1)^2[X,Y])\ =\ 
(t^2-t) ([X,Y], [X,Y]) + (2t-1)(t[X,Y], (t-1)[X,Y]),
\edm
Thus, the torsion $T^c(X,Y)$ becomes, after identifying $\m_t$ with $\g$
in the obvious way,
\be\label{inv-torsion}
T^c(X,Y)\ =\ -[X_t,Y_t]_{\m_t}\ =\ (1-2t)[X,Y].
\ee
For a biinvariant metric $g$, $T^c\in\Lambda^3(\g)$ (and this is equivalent
to the property that $\nabla^c$ is metric); $t=1/2$ corresponds
to the Levi-Civita connection, while $t=0,1$ are the flat $\pm$-connections
introduced by Cartan and Schouten (see \cite{Agricola&F10} and 
\cite{Reggiani10}). The holonomy of these connections is either
trivial ($t=0,1$) or $G$ ($t\neq 0,1$). The corresponding map
$\lambda^c:\ \g\x\g\ra\g$ is $\lambda^c(X,Y)=(1-t)[X,Y]$.

Finally, let us describe characteristic connections on Lie groups---i.\,e.~we
take  $M=G$ with a biinvariant metric and
consider $G\subset \SO(\g)$ through the adjoint representation. 
Again, we identify $\so(\g)\cong\Lambda^2\g$ and decompose it
under the action of $G$ into the representations 
$\so(\g)=\g\oplus\m$ where,
in general, $\m$ will not be irreducible.
The crucial observation  is that the intrinsic torsion 
$\Gamma$ (Section \ref{subsection:types} and 
\cite{frie}) vanishes, $\Gamma=0$, because the Levi-Civita connection is
a $(G\x G)$-invariant connection on $G$. 
Hence, $\ker\Theta\subset \Lambda^3(\g)$
parameterizes the space of characteristic connections (recall
that a characteristic connection is metric with skew torsion by construction).

Suppose $G$ is a compact connected Lie group. Its Lie algebra
splits into $\g=\z\oplus \g_1\oplus\ldots\oplus\g_q$, where
$\z$  is the center  and 
the $\g_i$ are the simple ideals in $\g$. The one-parameter family of
connections with torsion given by $(\ref{inv-torsion})$ has then an obvious 
generalization to a $q$-parameter family by rescaling the commutator
separately in all simple ideals,
\be\label{general-torsion}
T(X,Y)\ =\ \sum_{i=1}^q \alpha_i [X,Y] \big|_{\g_i},\quad
\alpha_i\in\R.
\ee
This connection is certainly biinvariant; it is also a canonical
connection for the reductive space $G\x G/\Delta G$, because  the
complement of $\Delta \g\subset \g\oplus\g$ can be chosen with a 
different parameter in each simple ideal $\g_i$. 
Finally, it also lies in $\ker \Theta$.
\begin{thm}\label{conn-on-lie-groups}
For a compact connected  Lie group $G$ with a biinvariant metric $g$,
the following families of connections coincide:
\begin{enumerate}
\item metric biinvariant connections with skew torsion,
\item metric canonical connections with skew torsion of the reductive 
spaces $G\x G/\Delta G$,
\item characteristic connections,
\end{enumerate}
and there is exactly one family of connections
with these properties, namely the one defined by eq. $(\ref{general-torsion})$.
\end{thm}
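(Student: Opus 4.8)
The plan is to prove the theorem by showing that all three families of connections are parametrized by the same data and hence coincide, using the explicit family \eqref{general-torsion} as the common description. The preceding discussion has already shown that every connection of the form \eqref{general-torsion} is simultaneously biinvariant, canonical for $G\x G/\Delta G$, and characteristic, so the content of the theorem is the three \emph{reverse} inclusions: that each of the three classes is \emph{exhausted} by \eqref{general-torsion}. My strategy is to reduce all three to a single algebraic computation, namely the determination of all $G$-equivariant skew maps $\lambda:\g\x\g\ra\g$ under the adjoint action.

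First I would treat the biinvariant case, which is the most restrictive and will do most of the work. By the criterion $\lambda(\Ad_g X,\Ad_g Y)=\Ad_g\lambda(X,Y)$ quoted from \cite[Thm 6.1]{Laquer92a}, a metric biinvariant connection with skew torsion corresponds to a $G$-equivariant element of $\Hom(\g\ox\g,\g)$ whose associated torsion lies in $\Lambda^3(\g)$. The key step is a Schur-type argument: decompose $\g=\z\oplus\g_1\oplus\ldots\oplus\g_q$ into center and simple ideals, and analyze the $\Ad$-equivariant maps $\g_i\ox\g_j\ra\g_k$ componentwise. For $i\neq j$ the relevant Hom-space vanishes by irreducibility and inequivalence of the adjoint representations of distinct simple factors (and the center is acted on trivially), while on each simple factor $\g_i$ the space of equivariant skew brackets $\g_i\x\g_i\ra\g_i$ is one-dimensional, spanned by the Lie bracket itself—this is the classical fact that $[\,\cdot,\cdot]$ is the unique (up to scale) invariant alternating bilinear map on a simple Lie algebra. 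Collecting the scalars $\alpha_i$ on the simple factors and noting that the skew-symmetry of the torsion kills any contribution from the center (where the bracket vanishes), one recovers exactly the family \eqref{general-torsion}.

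Next I would dispatch the remaining two inclusions, both of which are comparatively short. For canonical connections: the torsion of a canonical connection of $G\x G/\Delta G$ is $T^c(X,Y)=-[X_t,Y_t]_{\m}$ for a reductive complement $\m$ of $\Delta\g$, and every such complement is the graph of a $\Delta\g$-equivariant map; the same Schur decomposition shows the complement can differ from the diagonal only by an independent scalar in each simple ideal, forcing $T^c$ into the form \eqref{general-torsion}. For characteristic connections, the discussion preceding the theorem already established that $\ker\Theta\subset\Lambda^3(\g)$ parametrizes them, since the intrinsic torsion $\Gamma$ vanishes because the Levi-Civita connection is $(G\x G)$-invariant; thus a characteristic connection is exactly a biinvariant metric connection with skew torsion, and the first part applies verbatim. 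Assembling the three inclusions with the forward inclusions already in hand yields the claimed equality of all three families, each equal to \eqref{general-torsion}.

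The main obstacle I anticipate is the equivariance computation on a single simple factor—specifically, verifying rigorously that the only $\Ad$-invariant skew-symmetric bilinear maps $\g_i\x\g_i\ra\g_i$ are multiples of the bracket. This is where the compactness and simplicity are essential, and where one must rule out, via the absence of higher multiplicities of $\g_i$ inside $\Lambda^2(\g_i)$, any exotic equivariant bracket. I expect this to follow from a multiplicity count (the adjoint representation appears with multiplicity one in $\Lambda^2\g_i$ for a simple compact $\g_i$), but it is the step that genuinely uses representation theory rather than formal manipulation, so care is needed to treat the center and the cross terms between distinct ideals cleanly and to confirm that skew-symmetry, rather than merely equivariance, is what eliminates the central directions.
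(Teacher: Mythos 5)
Your overall architecture is the same as the paper's: reduce all three families to a single classification of $\Ad G$-equivariant bilinear maps $\g\x\g\ra\g$, let the biinvariant case carry the weight, and observe that the canonical and characteristic cases embed into it (the paper likewise does not classify reductive complements, noting only that any canonical torsion $-[X,Y]_{\m}$ is an equivariant alternating map). The gap is in the classification step itself. A biinvariant connection corresponds to an \emph{arbitrary} equivariant $\lambda\in\Hom_G(\g\ox\g,\g)$, not an alternating one, and your Schur/multiplicity argument only controls the copies of $\g$ inside $\Lambda^2\g$. The symmetric part $\mu^s$ of $\lambda$ drops out of the torsion $T(X,Y)=\lambda(X,Y)-\lambda(Y,X)-[X,Y]$ entirely, so your proposed mechanism --- ``skew-symmetry of the torsion kills any contribution'' --- cannot eliminate it; and there is genuinely something to eliminate: for $G=\SU(n)$, $n\ge 3$, the adjoint module also occurs in $S^2\g$, via the Jordan-type product $\eta(X,Y)=i\alpha\bigl(XY+YX-\tfrac{2}{n}\tr(XY)\,I\bigr)$. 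The paper spends most of its proof exactly here, invoking Laquer's tables for $\Hom_G(\g\ox\g,\g)$ and checking by hand that $\eta$ (and the extra maps on $\U(n)$) never give a \emph{metric} connection. A clean substitute would be to note that a metric connection with totally skew torsion satisfies $\nabla=\nabla^g+\tfrac12 T$, which together with $\nabla^g_XY=\tfrac12[X,Y]$ forces $\lambda$ to equal its antisymmetric part --- but some such argument must be made; as written, your proof silently assumes $\lambda$ alternating.

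Second, even the alternating classification is wrong as stated once the center is non-trivial. Since $\z$ is a trivial module, $\z\wedge\g_i$ is a direct sum of $\dim\z$ copies of $\g_i$, so $\Hom_G(\z\wedge\g_i,\g_i)\cong\z^*\neq 0$: your parenthetical ``the center is acted on trivially'' cuts exactly the wrong way. This Hom-space is where Laquer's exotic antisymmetric map $\nu(X,Y)=i(X\tr Y-Y\tr X)$ on $\un(n)$ lives; it does change the torsion, and it must be excluded by the metric/skew-torsion conditions (its torsion pairs \emph{symmetrically} with $\g_i$), not by equivariance. The paper disposes of it explicitly. (The summand $\Hom(\Lambda^2\z,\z)$, nonzero for $\dim\z\ge 3$, is addressed by neither you nor the paper and is a genuine edge case for the statement.) Finally, the fact you flag as your main obstacle --- multiplicity one of $\g$ in $\Lambda^2\g$ for compact simple $\g$ --- is correct but is itself taken from Laquer's case-by-case Table I in the paper; it is not the step where your argument breaks down.
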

\begin{proof}

Consider a linear map $0\neq \lambda:\ \g\x\g\ra\g$ defining a biinvariant connection.
We interpret $\lambda$ as an intertwining
map $\mu: \g\ox\g\ra\g$ for $\Ad G$ by setting 
$\mu(X\ox Y):=\lambda(X,Y)$; the intertwining property is exactly the
biinvariance condition. Hence, the interesting question is to find copies
of $\g$ inside $\g\ox\g$. It is well-known that $\g\ox\g$ splits into the 
$G$-modules $\g\ox\g=S^2\g\oplus \Lambda^2\g$, and that $\g$ will always be
a submodule of  $\Lambda^2\g$ (however, there are also compact Lie groups
for which  $\g$ appears in $S^2\g$ as we will discuss later).
Decompose $\mu$ into its symmetric and antisymmetric part,
$\mu=\mu^s+\mu^a,\ \mu^s:\ S^2\g\ra\g,\ \mu^a:\ \Lambda^2\g\ra\g$.

\medskip
\emph{1st case: $\mu^s=0$}, i.\,e. $\mu=\mu^a$ is antisymmetric. This is 
the generic case that one would expect, as it includes the family of 
connections
that we already constructed. Since we're only interested in $\mu\neq 0$,
its dual map $\tilde\mu: \g\ra\g\subset\Lambda^2\g\cong\so(\g)$ exists.
The torsion of the connection defined by $\mu$ is
$T(X,Y,Z)=2g(\mu(X\ox Y),Z)- g([X,Y],Z)$; by assumption, it is a 
$3$-form in $\ker\Theta$, and since we knew before that $g([X,Y],Z)\in
\ker\Theta$, we can conclude that $g(\mu(X\ox Y),Z)\in\ker\Theta$ as well.
This proves that any biinvariant connection is characteristic in the sense
described before. One checks that the argument can be inverted, 
hence the sets of antisymmetric biinvariant connections and characteristic 
connection coincide.

Consider now a canonical connection, i.\,e.~the connection induced by
the choice of a reductive complement $\m$ of $\Delta\g\subset\g\oplus\g$.
It is tedious to describe all possible spaces $\m$; happily  it turns out
not be necessary (as a remark, we note that different complements
 will not necessarily induce different connections, for example, the embedding
of the center has no influence on the connection).
Whatever $\m$, the torsion $T^c_\m(X,Y)=-[X,Y]_\m$ of its canonical connection
is an $\Ad G$-equivariant antisymmetric map $\Lambda^2\g \ra\g$ and hence 
defines a biinvariant connection with $\mu^s=0$. It is a priori not clear 
whether one can find to any biinvariant connection satisfying $\mu^s=0$ 
a reductive complement $\m$ such that it coincides with its canonical
connection, but we will not need this.
 
\medskip
\emph{2nd case: $\mu^s\neq 0$}. We wish to exclude this case.
Unfortunately, we have to apply a `brute force' argument.
Recall that we showed that the connections (2) and (3) are (special)
biinvariant connections; hence it suffices to prove that
a metric biinvariant connection with skew torsion has necessarily $\mu^s=0$.
We will use the classification of biinvariant connections of compact Lie
groups given by Laquer. In \cite[Table I]{Laquer92a},
he decomposed the $G$-representations $S^2\g$ and $\Lambda^2\g$ for all 
compact simple Lie groups. He confirmed that $\g$ appears with
multiplicity one in $\Lambda^2\g\subset\g\ox\g$ for all of them,
but furthermore, he obtained the surprising result that $\g$ 
does not occur in $S^2\g $ for all of them -- except for
 $G=\SU(n),\, n\geq 3$ (which includes $\SO(6)$, since 
$\so(6)\cong\su(4)$). The Lie group $G=\SU(n)$ has a copy of $\su(n)$
in $S^2\g$ as well, corresponding to a symmetric $\Ad\SU(n)$-equivariant
map $\eta:\ \su(n)\x\su(n)\ra\su(n)$ given by \cite[p.550]{Laquer92a}
\be
\eta(X,Y)\ =\ i \alpha \left[XY+YX- \frac{2}{n}\tr(XY)\cdot I \right],\quad
\alpha\in\R.
\ee
A biinvariant metric on $G=\SU(n)$ is necessarily a multiple of the 
negative of the Killing form, hence we can take $g(X,Y)=-2n\,\tr(XY)$.
An elementary computation shows that, for general $X,Y,Z\in\su(n)$,
the quantity $g(\eta(X,Y), Z)+ g(\eta(X,Z),Y)\neq 0$, hence
the biinvariant connection defined by $\eta$ is \emph{not} metric and thus
not of relevance for us.
In fact, Laquer  himself extended his result to \emph{arbitrary} 
compact Lie groups  \cite[Thm 10.1]{Laquer92a}. 
The result is similar, if slightly more involved.
Besides $\SU(n)$, only $\U(n)$ admits symmetric maps 
$\eta:\ \un(n)\x\un(n)\ra\un(n)$; they span a $3$-dimensional space for
$n=2$ and a $4$-dimensional space for $n\geq 3$, and there
is an additional antisymmetric map (besides the obvious one $[X,Y]$),
namely $\nu(X,Y)=i (X\tr Y - Y\tr X)$. Using that a biinvariant metric on 
$\U(n)$ is just any positive definite extension to the center
of the metric of $\SU(n)$, one checks that none of them yields a metric 
connection. 

\smallskip
All in all, only connections corresponding to the embedding of
$\g$ inside $\Lambda^2 \g$ are candidates for all three types of
connections, and these are of course the ones described by
eq. $(\ref{general-torsion})$. This finishes the proof.

\smallskip
Although not necessary, we give an alternative proof
of the claim for $\g$ semisimple (assuming that one already
established that the connections (1) and (3) coincide and that they include the
connections (2)) --- it has the charm that it
does not need the classification results of Laquer. However, we
were not able to extend it to the compact case without using
the classification, so it does not improve the situation much.

We begin with the case that $G$ is simple, hence
the adjoint representation is irreducible.  As explained in 
the proof of Theorem \ref{kernel-theta}, any $T\in \ker\Theta$ 
defines then an irreducible  skew holonomy system $(V=\g,\theta,G)$, 
and for dimensional reasons, $G\neq \SO(\g)$, so the system is non-transitive.
By \cite[Thm 2.4]{Olmos&R08}, an irreducible non-transitive skew holonomy 
system is symmetric, and  the map $\theta: 
\g\ra\g\subset\so(\g)\cong\Lambda^2(\g)$ is unique up to a
scalar multiple \cite[Prop. 2.5]{Olmos&R08}---namely, it is
given by $\theta(X)=[X,-]\in\Lambda^2(\g) $, and $\theta(X)=X\haken T$. 
We conclude that for $G$ simple,
the space of characteristic connections is indeed given by
$(\ref{inv-torsion})$.  By the splitting theorem (see 
\cite[Section 4]{Agricola&F04}
or \cite[Lemma 2.2]{Olmos&R08}), the conclusion still holds for $G$ semisimple.
\end{proof}
\begin{NB}
The three rank two symmetric spaces $\SU(3)/\SO(3)$, $\SU(6)/\Sp(3)$,
and $E_6/E_4$ have a remarkable connection property very similar
to the one described for the Lie groups $\SU(n),\, \U(n)$ in the proof above,
again due to Laquer. 
In \cite{Laquer92b}, it was observed that
the symmetric spaces  $\SU(n)/\SO(n)$, 
$\SU(2n)/\Sp(n)$, and $E_6/E_4$ admit invariant affine connections
that are \emph{not} induced from the commutator. However,
a closer inspection of the three symmetric spaces yields that
these connections are not metric with skew-symmetric torsion, and hence
do not yield further candidates for a characteristic connection,
in agreement with the uniqueness statement from 
Theorem \ref{kernel-theta}. We were not
able to relate the existence of these `exotic' connections  to the 
characteristic connection or any other deeper geometric property of geometries
modeled on rank 2 symmetric spaces; but it may be worth mentioning
that their existence is linked to  the existence of a Jordan product.
\end{NB}
%

\section{The geometry of some homogeneous $\Sp(3)$ structures}\label{ch:ex}
%
Since there are no Lie groups carrying any reasonable $\Sp(3)$ structure,
it is natural to ask for homogeneous spaces with such a structure.
This section is devoted to the explicit construction of some 
$14$-dimensional homogeneous 
spaces carrying $\Sp(3)$-structures and their geometric properties. 

We choose a reductive complement $\m$ of $\sp(3)$ inside $\su(6)$, 
$\su(6)\cong\m\oplus\sp(3)$; an explicit realization
as well as a description of the  $14$-dimensional isotropy 
representation $\vrho(\sp(3))\subset\so(\m)\cong\so(14)$ of $\Sp(3)$
 is being given in Appendix \ref{ch:ap:sp3}.
The notation will be as follows:
The homogeneous spaces will be realized as quotients $M_i=K_i/H_i$ for a
running index $i$, yielding at Lie algebra level the reductive decompositions
\bdm
\k_i \ \cong \ \m_i \ \oplus \ \h_i \ \cong \
\langle K^i_j~|~j=1..14\rangle \ \oplus \ \langle H^i_j~|~j=1..r_i\rangle  .
\edm
Again, the explicit elements $K^i_j$ and $H^i_j$ will be listed in the
Appendix for each example. 
We will show that we can identify the subspaces $\m\cong\m_i$ inducing
$\vrho_i(\h_i)\subset\vrho(\sp(3))$ and, consequently, the $H_i$ structure
is a reduction of an $Sp(3)$ structure. 
%
\subsection{The higher dimensional Aloff Wallach manifold
$\SU(4)/\SO(2)$}
\label{ex:1}
We embed $H_1=\SO(2)$ in the Lie group $K_1=\SU(4)$ as
\bdm
SO(2)\ni \begin{bmatrix} \cos t & \sin t \\ -\sin t&\cos t \end{bmatrix} 
\lmapsto \diag(e^{-it},e^{-it},e^{it},e^{it})\in \SU(4).
\edm
The action of $\h_1=\so(2)$ on the $14$-dimensional $\Sp(3)$-representation 
$V^{14}$ splits into four $2$-dimensional representations and six trivial 
ones. For an invariant metric, we choose multiples of the Killing form on 
the invariant spaces parameterized by coefficients $\alpha, \alpha_2,\ldots >0$,  
\bdm g^{\alpha,..,\gamma}\ =\ \diag(\alpha,\alpha,\alpha_{2},\alpha_{2},
\alpha_{3},\alpha_{3},\alpha_{4},\alpha_{4},\alpha_{5},\alpha_{6},
\alpha_{7},\alpha_{8},\beta,\gamma)
\edm
with respect to the  orthonormal basis $K^1_1,\ldots, K^1_{14}$ of $\m_2$
described in Appendix \ref{ch:ap:su4}. The justification for this choice
of notation stems from the following result.
\begin{thm}
Consider the manifold $M_1=\SU(4)/\SO(2)$ equipped with the metric
$g^{\alpha,..,\gamma}$. 
For any parameters $\alpha,\alpha_i,\beta,\gamma>0$, it carries
an  $98$-dimensional space of invariant $Sp(3)$-connections,
and for $\alpha=\alpha_2=..=\alpha_8$, the $Sp(3)$ structure
admits a characteristic connection with torsion 
$T^{\alpha\beta\gamma}\in\Lambda^3(M_1)$. These $Sp(3)$ structures
with characteristic connection have the following properties:
\begin{enumerate}
\item The  characteristic connection has alwas parallel torsion,
$\nabla^{\alpha\beta\gamma}T^{\alpha\beta\gamma}=0$.
\item The structure is of mixed type, 
$T^{\alpha\beta\gamma}\notin V^i$ for  $i=21,70,84,189$.
\item The structure is never integrable, i.\,e.~there are no parameters 
$\alpha,\beta,\gamma>0$ with vanishing torsion.
\item For the characteristic connection $\nabla^{\alpha\beta\gamma}$, 
  the Lie algebra of the holonomy group is a subalgebra of the maximal 
torus of $\sp(3)$ and it is
\begin{itemize}
\item one-dimensional if $\beta=\alpha=\gamma$,
\item two-dimensional if ($\beta=\alpha$ and $\gamma\neq\alpha$) or 
($\beta\neq\alpha$ and $\gamma=\alpha$),
\item three-dimensional if $\beta\neq\alpha\neq\gamma$.
\end{itemize}
\end{enumerate}
\end{thm}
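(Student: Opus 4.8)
The plan is to reduce every assertion to an explicit computation in the fixed basis $K^1_1,\ldots,K^1_{14}$ of $\m_1$, using the realisation of $\vrho(\sp(3))\subset\so(14)$ from Appendix \ref{ch:ap:su4} and carrying out the representation-theoretic bookkeeping with LiE. First I would establish the count of invariant connections: by Wang's theorem \cite{KN1}, invariant $\Sp(3)$-connections on $M_1=\SU(4)/\SO(2)$ are in bijection with $\h_1$-equivariant maps $\m_1\to\vrho(\sp(3))$, i.e.\ with $\Hom_{\SO(2)}(V^{14},V^{21})$. Decomposing $V^{14}$ (four $2$-dimensional weight spaces and six trivial ones) and $V^{21}=\sp(3)$ into $\SO(2)$-weight spaces and matching equal weights yields the dimension $98$: the six trivial summands of $V^{14}$ each contribute the dimension of the centraliser of $\SO(2)$ in $\sp(3)$, while each nontrivial weight contributes twice the multiplicity of the matching weight in $\sp(3)$.

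For existence and uniqueness of the characteristic connection I would compute the intrinsic torsion $\Gamma$ of $g^{\alpha,..,\gamma}$ and project it onto the seven irreducible summands of $V^{14}\otimes V^{70}$ from Lemma \ref{decomp-reps}. The point is to verify that the components in $V^{14}$, $V^{90}$ and $V^{512}$ vanish \emph{precisely} when $\alpha=\alpha_2=\cdots=\alpha_8$ (with $\beta,\gamma$ free); existence of a characteristic connection then follows from the Corollary characterising $\Sp(3)$ structures admitting one (the $14$-, $90$- and $512$-dimensional parts of $\Gamma$ must vanish), and uniqueness from Theorem \ref{kernel-theta}, since $\Sp(3)$ acts irreducibly on $V^{14}$ and not by its adjoint representation. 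Writing $\nabla^{\alpha\beta\gamma}=\nabla^g+\tfrac12 T^{\alpha\beta\gamma}$, the torsion is obtained from the homogeneous Koszul formula for $\nabla^g$ together with $\nabla^c\Upsilon=0$; it is an invariant $3$-form expressed in the $K^1_j$.

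Properties (1)--(3) then become finite checks at the base point $o=eH_1$, the results propagating by $K_1$-invariance. For (1), note that $T^{\alpha\beta\gamma}$ is an invariant $3$-form and $\nabla^{\alpha\beta\gamma}$ an invariant connection, so $\nabla^{\alpha\beta\gamma}T^{\alpha\beta\gamma}$ is an invariant tensor field; it therefore suffices to evaluate it at $o$, where it is a fixed expression in the $K^1_j$, and check that it vanishes. (When $\beta=\gamma=\alpha$ the metric is naturally reductive in the sense of \cite{DAtri-Ziller79} and $\nabla^{\alpha\beta\gamma}$ is the canonical connection, for which the invariant torsion is automatically parallel---a useful consistency check.) For (2) I would apply the equivariant projectors onto $V^{21},V^{70},V^{84},V^{189}$ and exhibit, for all admissible parameters, at least two nonzero components, so that $T^{\alpha\beta\gamma}$ lies in no single $V^i$. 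For (3) it suffices to show $T^{\alpha\beta\gamma}\neq0$ for all $\alpha,\beta,\gamma>0$: since $\m_1$ is not a subalgebra of $\su(4)$ the bracket contributes a nonvanishing skew part, and one reads off a single component that is a strictly positive combination of the parameters.

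The main obstacle is property (4), the holonomy computation. Because $\nabla^{\alpha\beta\gamma}T^{\alpha\beta\gamma}=0$ and the curvature of an invariant connection is itself invariant (so $\nabla^c R^c=0$), the Ambrose--Singer holonomy algebra is generated by the curvature operators $R^c(K^1_i,K^1_j)\in\vrho(\sp(3))$ at $o$. I would compute these explicitly as functions of $\alpha,\beta,\gamma$, verify that they all lie in one fixed maximal torus $\mathg{t}\subset\sp(3)$---this abelianness being the delicate structural fact, forced by the weight decomposition under the isotropy $\SO(2)$---and then determine the dimension of their linear span. The case analysis follows: the three torus directions are switched on or off according to whether $\beta-\alpha$ and $\gamma-\alpha$ vanish, yielding the stated dimensions $1$, $2$, or $3$. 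Keeping track of exactly which combinations of $\alpha,\beta,\gamma$ enter each curvature component, and proving that no generator ever leaves $\mathg{t}$, is the part demanding the most careful computation.
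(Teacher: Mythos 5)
Your overall strategy coincides with the paper's: Wang's theorem for the $98$-dimensional count, an explicit determination of when the torsion of an invariant $\Sp(3)$-connection is totally skew, invariance-based pointwise checks at the base point for (1)--(3), and a curvature computation in $\vrho(\sp(3))$ for (4). Two of your choices differ in a harmless way. For existence of the characteristic connection you project the intrinsic torsion onto the summands of $V^{14}\otimes V^{70}$ and invoke the corollary to Theorem \ref{kernel-theta}, whereas the paper solves directly for the equivariant maps $\Lambda_{\m_1}$ whose torsion $\Lambda_{\m_1}(X)Y-\Lambda_{\m_1}(Y)X-[X,Y]_{\m_1}$ is a $3$-form; the latter has the advantage of producing the connection in closed form (nonzero only on $K^1_{13},K^1_{14}$) at the same time. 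For the mixed-type statement the paper applies the Casimir operator of the $\Sp(3)$-action on $\Lambda^3(V^{14})$ (eigenvalues $-8,-12,-18,-16$ on $\sp(3),V^{70},V^{189},V^{84}$) to $T^{\alpha\beta\gamma}$ and checks that $CT^{\alpha\beta\gamma}=\lambda T^{\alpha\beta\gamma}$ has no solution for any eigenvalue; this is computationally lighter than constructing the four equivariant projectors you propose, but equivalent.

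The one point you must repair is the justification in (4). You assert that invariance of the curvature gives $\nabla^{\alpha\beta\gamma}R=0$ and hence that the curvature operators at $o$ alone generate the holonomy algebra. Invariant tensors are parallel for the \emph{canonical} connection of the reductive decomposition, not for an arbitrary invariant connection, and $\nabla^{\alpha\beta\gamma}$ is not canonical here since $\Lambda_{\m_1}(K^1_{13})$ and $\Lambda_{\m_1}(K^1_{14})$ are nonzero unless $\beta=\gamma=\alpha$. The correct tool is the Kobayashi--Nomizu formula, by which the holonomy algebra is $\widetilde{\m_1}+[\Lambda_{\m_1}(\m_1),\widetilde{\m_1}]+[\Lambda_{\m_1}(\m_1),[\Lambda_{\m_1}(\m_1),\widetilde{\m_1}]]+\cdots$ with $\widetilde{\m_1}$ the span of the curvature operators. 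Your conclusion survives for exactly the reason you flag as the delicate structural fact: both $\Lambda_{\m_1}(\m_1)$ and $\widetilde{\m_1}$ lie in the abelian maximal torus of $\sp(3)$, so all higher brackets vanish and the holonomy algebra reduces to $\widetilde{\m_1}=\langle\vrho(A_{21}),(\alpha-\beta)\vrho(A_{9}),(\alpha-\gamma)\vrho(A_{10})\rangle$, giving the dimensions $1$, $2$, $3$ as claimed. But the step should be argued via this formula rather than via a false parallelism of $R$.
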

\begin{proof}
In Appendix \ref{ch:ap:su4} we construct a decomposition of the relevant 
Lie algebras. By a theorem of Wang \cite{KN1}, invariant metric
connections $\nabla^{\alpha,..,\gamma}$ are in bijective 
correspondence with linear maps
$\Lambda_{\m_1}: \, {\m_1}\ra\so(\m_1)$ that are equivariant under 
the isotropy representation $\vrho_1$,
\bdm
\Lambda_{\m_1} (\vrho_1(h)X)\ =\ \vrho_1(h)\Lambda_{\m_1}(X)\vrho_1(h)^{-1} \quad
\forall h\in \SO(2),\ X\in \m_1.
\edm
A connection is an $Sp(3)$ connection  if the image of $\Lambda_{\m_1}$ is 
inside $\sp(3)$, $\Lambda_{\m_1}:  {\m_1}\ra\sp(3)$.
One calculates all such maps $\Lambda_{\m_1}$. They are given by the two 
following conditions 
\begin{itemize} 
 \item $\Lambda_{\m_1}$ maps the space $\langle K^1_i~|~i=9..14\rangle$ into the 
space $\langle \vrho(A_i)~|~i=1..10,21\rangle$. This part of $\Lambda_{\m_1}$ 
depends on $66$ parameters.
 \item $\Lambda_{\m_1}$ maps the space $\langle K^1_i~|~i=1..8\rangle$ into the 
space $\langle \vrho(A_i)~|~i=11..18\rangle$. The corresponding $(8\times 8)$ 
matrix depends on $32$ parameters $a_i$, $i=1..32$, via the formulas
\bdm
 \begin{bmatrix}M^{1,2}&M^{3,4}&M^{5,6}&M^{7,8}&\\M^{9,{10}}
&M^{{11},{12}}&M^{{13},{14}}&M^{{15},{16}}\\
M^{{17},{18}}&M^{{19},{20}}&M^{{21},{22}}&M^{{23},{24}}\\
M^{{25},{26}}&M^{{27},{28}}&M^{{29},{30}}&M^{{31},{32}}\end{bmatrix},
\quad
M^{i,j}:=\begin{bmatrix}a_i&-a_j\\a_j&a_i\end{bmatrix}.
\edm
\end{itemize}
Since the torsion of the connection defined by $\Lambda_\m$ is given by
\cite[X.2.3]{KN1}
\be\label{eq:torsion}
T(X,Y)_o = \Lambda_\m(X)Y-\Lambda_\m(Y)X- [X,Y]_\m, \quad X,\,Y\in\m
\ee
we can calculate that the torsion
$T^{\alpha,..,\gamma}\in\Lambda^3(\SU(4)/\SO(2))$ 
if and only if $\alpha=\alpha_2=~..~=\alpha_8$ and
\bdm
\Lambda_{m_1}(K^1_{13})=\frac {\sqrt {2} 
\left( \alpha-\beta \right) }{\alpha\sqrt {\beta}}\vrho(A_9),~~
\Lambda_{m_1}(K^1_{14})
=\frac {\sqrt {2} \left( 
\alpha-\gamma \right) }{\alpha\sqrt {\gamma}}\vrho(A_{10}),
\edm
as well as $\Lambda_{m_1}(K^1_i)=0 \mbox{ for } i\neq 13,14$.
A closer look at the torsion shows that it never vanishes.
For the invariant torsion tensor and $X,Y,V\in\m$ we have
\be\label{eq:abltor}
(\nabla_VT)_o(X,Y)=\Lambda(V)T(X,Y)_o-T(\Lambda(V)X,Y)_o-T(X,\Lambda(V)Y)_o
\ee
and derive that $\nabla T^{\alpha\beta\gamma}=0$ for all $\alpha,\beta,\gamma>0$.
For $\gamma_{i,j,k}\in\Lambda^3(V^{14})$ and $s=1..21$ the standard 
representation $\nu$ of $Sp(3)$ on $\Lambda^3(V^{14})$ is given by:
\bdm
\nu(A_s)(\gamma_{i,j,k})
=\sum_{l}{\left(\gamma_{l,j,k}\cdot\vrho(A_s)_{l,i}
+\gamma_{i,j,l\cdot}\vrho(A_s)_{l,j}
+\gamma_{i,j,l}\cdot\vrho(A_s)_{l,k}\right)}.
\edm
We calculate the corresponding Casimir operator $C=\sum_{i=1}^{21}\nu(A_i)^2$ 
of this representation, which commutes with $\nu(A_i)$ for $i=1..21$. 
Therefore $C$ is given as a multiple of the identity on the irreducible 
components $\sp(3)$, $V^{70}$, $V^{84}$ and $V^{189}$. Its eigenvalues 
are $-8$, $-12$, $-18$ and $-16$. Applying the operator $C$ to the 
torsion, for any eigenvalue we obtain a  system of equations without solutions.

As stated in Corollary 4.2, Chapter 10 of \cite{KN2}, the Lie algebra of 
the holonomy group is given by 
\be\label{eq:hol}
\widetilde{\m_1}+[\Lambda_{\m_1}(\m_1),\widetilde{\m_1}]
+[\Lambda_{m_1}(\m_1),[\Lambda_{\m_1}(\m_1),\widetilde{\m_1}]]+\ldots 
\ee
where $\widetilde{\m_1}$ is spanned by all elements
\be\label{eq:mnull}
[\Lambda_{\m_1}(X),\Lambda_{\m_1}(Y)]-\Lambda_{\m_1}(proj_{\m_1}([X,Y]))-\vrho_1([X,Y]).
\ee
for $X,Y\in\m_1$.
With $T^3=\langle \vrho(A_9),\vrho(A_{10}),\vrho(A_{21})\rangle$ being the 
maximal torus in $\sp(3)\subset\so(\m)\cong\so(\m_1)$ we have 
$\Lambda_{\m_1}(\m_1)=\langle (\alpha-\beta)\vrho(A_9),(\alpha-\gamma)
\vrho(A_{10})\rangle\subset T^3$. Thus the first term in (\ref{eq:mnull}) 
vanishes and with $\vrho_1(\m_1)=\langle \vrho(A_{21})\rangle$ one easily gets
\bdm
\widetilde{\m_1}=\langle \vrho(A_{21}),(\alpha-\beta)\vrho(A_{9}),
(\alpha-\gamma)\vrho(A_{10})\rangle. 
\edm
With $\Lambda_{\m_1}(\m_1)\subset T^3$ and $\widetilde{\m_1}\subset T^3$ 
all except the first term of (\ref{eq:hol}) vanish and we get the algebra 
of the holonomy group equal to $\widetilde{\m_1}$.
\end{proof}
\begin{lem}[Curvature properties]
For any characteristic connection $\nabla^{\alpha\beta\gamma}$, the 
Ricci tensor in the constructed basis is given by
($a:=2\alpha-\gamma$, $b:=2\alpha-\beta$, $c:=2\alpha-\beta-\gamma$) 
\bdm
\Ric^{\nabla^{\alpha\beta\gamma}}\ =\ 
\frac{1}{\alpha^2}\ \diag(a,a,a,a,b,b,b,b,c,c,c,c,0,0).
\edm
Thus the scalar curvature is given by
\bdm
\Scal^{\nabla^{\alpha\beta\gamma}}=\frac{8(3\,\alpha-\beta-\gamma)}{\alpha^2}.
\edm
The Riemannian Ricci tensor  is for  
$a:=6\alpha-\gamma$, $b:=6\alpha-\beta$ and $c:=6\alpha-\beta-\gamma$ given by
\bdm
\Ric^g\ =\ \frac{1}{2\alpha^2} \diag(a,a,a,a,b,b,b,b,c,c,c,c,4\beta,4\gamma)
\edm
with scalar curvature
\bdm
\Scalg \ =\ \frac{2(18\,\alpha-\beta-\gamma)}{\alpha^2}.
\edm
In particular, such a manifold is never $\nabla^{\alpha\beta\gamma}$-Einstein 
nor Einstein in the Riemannian sense.
\end{lem}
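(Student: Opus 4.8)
The plan is to compute both the $\nabla^{\alpha\beta\gamma}$-Ricci tensor and the Levi-Civita Ricci tensor directly from the data already assembled in the main theorem, and then to read off the scalar curvatures and the (non-)Einstein conclusions as corollaries. Everything we need is in place: the connection map $\Lambda_{\m_1}$ is known explicitly (it vanishes except on $K^1_{13}, K^1_{14}$, where it is a multiple of $\vrho(A_9)$ resp.~$\vrho(A_{10})$), the torsion $T^{\alpha\beta\gamma}$ has been determined, and the reductive decomposition $\k_1\cong\m_1\oplus\h_1$ together with the basis $K^1_1,\ldots,K^1_{14}$ is fixed in Appendix \ref{ch:ap:su4}.

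First I would compute $\Ric^{\nabla^{\alpha\beta\gamma}}$. The curvature transformation of an invariant connection on a reductive homogeneous space is expressed through $\Lambda_{\m_1}$ and the bracket by the same Nomizu-type formula used for the holonomy in equations $(\ref{eq:hol})$–$(\ref{eq:mnull})$, namely $R(X,Y) = [\Lambda_{\m_1}(X),\Lambda_{\m_1}(Y)] - \Lambda_{\m_1}(\mathrm{proj}_{\m_1}[X,Y]) - \vrho_1([X,Y])$ acting as $\Lambda_{\m_1}(X)\Lambda_{\m_1}(Y)-\cdots$ on $\m_1$; contracting $\Ric^{\nabla}(X,Y)=\sum_j g(R(K^1_j,X)Y, K^1_j)$ in the orthonormal basis (rescaled by the metric $g^{\alpha,..,\gamma}$, here specialized to $\alpha=\alpha_2=\cdots=\alpha_8$) yields the diagonal tensor claimed. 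Because $\Lambda_{\m_1}$ lands in the maximal torus $T^3\subset\sp(3)$ and kills all basis vectors except the last two, the bracket terms collapse dramatically and only the structure constants of $\k_1$ entering $[X,Y]_{\m_1}$ and $\vrho_1([X,Y])$ survive; this is where the diagonal pattern $(a,a,a,a,b,b,b,b,c,c,c,c,0,0)$ with $a=2\alpha-\gamma$, $b=2\alpha-\beta$, $c=2\alpha-\beta-\gamma$ emerges. The two vanishing entries reflect the directions $K^1_{13}, K^1_{14}$ on which $\vrho_1$ and the torsion conspire so that the diagonal curvature contribution cancels. Summing the diagonal then gives $\Scal^{\nabla^{\alpha\beta\gamma}}=\alpha^{-2}(4a+4b+4c)=8(3\alpha-\beta-\gamma)/\alpha^2$.

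For the Riemannian side I would use the relation between the Levi-Civita Ricci curvature and the Ricci curvature of a metric connection with skew torsion $T$, namely $\Ric^g(X,Y)=\Ric^{\nabla}(X,Y)+\tfrac14\sum_{j}g(T(X,K^1_j),T(Y,K^1_j))+\tfrac12(\delta T)(X,Y)$ type corrections; since $T^{\alpha\beta\gamma}$ is $\nabla$-parallel (property (1) of the theorem), the divergence term drops and one is left with adding the explicit quadratic-in-$T$ term. Alternatively, and perhaps more transparently, I would compute $\Ric^g$ directly from the standard formula for the Levi-Civita Ricci tensor of a naturally reductive metric in terms of the structure constants and the Casimir of the isotropy representation on $\m_1$; the coefficients $6\alpha-\gamma$, $6\alpha-\beta$, $6\alpha-\beta-\gamma$ together with the distinguished entries $4\beta,4\gamma$ in the last two slots then follow. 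Tracing gives $\Scalg=2(18\alpha-\beta-\gamma)/\alpha^2$.

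The main obstacle is entirely computational rather than conceptual: carefully evaluating the brackets $[K^1_i,K^1_j]_{\m_1}$ and the isotropy action $\vrho_1$ in the explicit basis, keeping the metric rescalings straight so that the Ricci endomorphism is computed with respect to the \emph{orthonormal} frame (otherwise the diagonal entries acquire spurious factors of $\alpha,\beta,\gamma$). The final assertions are then immediate: a connection is $\nabla$-Einstein (resp.~Einstein) iff its Ricci tensor is a scalar multiple of the identity, but both diagonal tensors above have genuinely distinct entries for all $\alpha,\beta,\gamma>0$ --- the $\nabla$-Ricci tensor has two zero eigenvalues while the others are positive, and the Riemannian Ricci tensor cannot be made proportional to the identity because forcing $a=b=c$ and matching the $4\beta,4\gamma$ entries leads to $\beta=\gamma=0$, which is excluded. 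This yields the last sentence of the statement.
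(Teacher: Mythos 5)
Your proposal follows essentially the same route as the paper: compute $\Ric^{\nabla^{\alpha\beta\gamma}}$ directly in the constructed orthonormal basis from the invariant-connection curvature formula, then obtain $\Ric^g$ from the identity $\Ric^g(X,Y)=\Ric^{\nabla}(X,Y)+\tfrac14\sum_i g(T(X,K^1_i),T(Y,K^1_i))$, which is valid here precisely because the symmetry of $\Ric^{\nabla}$ (equivalently $\delta T=0$, guaranteed by parallel torsion) kills the divergence term --- this is exactly the paper's citation of Ivanov--Papadopoulos. The concluding non-Einstein argument is also the intended one, so the proposal is correct.
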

\begin{proof}
We calculate the Ricci tensor $\Ric^{\nabla^{\alpha\beta\gamma}}$ for the 
characteristic connection in the constructed basis. Since 
$\Ric^{\nabla^{\alpha\beta\gamma}}$ is symmetric, with \cite{IP00} we get the identity
\be\label{eq:ric}
\Ric^g(X,Y)\ =\ \Ric^{\nabla^{\alpha\beta\gamma}}(X,Y)+
\frac{1}{4}\sum_{i=1}^{14}g^{\alpha\beta\gamma}(T^{\alpha\beta\gamma}(X,K_i^1),
T^{\alpha\beta\gamma}(Y,K_i^1))
\ee
and calculate the Ricci tensor for the Levi Civita connection $\Ric^g$.
\end{proof}
%
%
%
\subsection{The homogeneous space \boldmath$\U(4)/\SO(2)\times\SO(2)$}
\label{ch:ex2}$ $
\smallskip\\
We parametrize $H_2:=\SO(2)\times \SO(2)$ by a pair of
real numbers $(t_1,t_2)$
\bdm
\left( \begin{bmatrix} \cos t_1 & \sin t_1 \\ -\sin t_1&\cos t_1
  \end{bmatrix}, \begin{bmatrix} \cos t_2 & \sin t_2 \\ 
-\sin t_2 &\cos t_2 \end{bmatrix} \right)\in\SO(2)\times \SO(2)=: H_2
\edm
and embed  the Lie group $H_2$ in
$\U(4)=:K_2$ by: 
\begin{align*}
H_2\ra K_2, ~ (t_1,t_2) &\mapsto \diag\left(e^{\frac{i}{2}(t_1-t_2)},
e^{\frac{i}{2}(t_1+t_2)},e^{\frac{i}{2}(-t_1+t_2)},e^{\frac{i}{2}(-t_1-t_2)}\right).
\end{align*}
The action of $\h_2=\so(2)\oplus\so(2)$ splits the irreducible 
$14$-dimensional $\Sp(3)$-representation $V^{14}$ in six $2$-dimensional 
representations and two trivial ones. We choose an invariant metric
\bdm 
g^{\alpha,..,\gamma}=\diag(\alpha,\alpha,\alpha_{2},\alpha_{2},\alpha_{3},
\alpha_{3},\alpha_{4},\alpha_{4},\alpha_{5},\alpha_{5},\alpha_{6},\alpha_{6},\beta,\gamma)
\edm
with an orthonormal basis $K^2_1,\ldots, K^2_{14}$ of $\m_1$ as done in  
Appendix \ref{ch:ap:u4}. 
\begin{thm}
Consider the manifold $M_2=\U(4)/\SO(2)\times\SO(2)$ equipped with the
metric $g^{\alpha,..,\gamma}$.
For general parameters $\alpha,\alpha_i,\beta,\gamma>0$, it carries a 
$30$-dimensional space of invariant $Sp(3)$ connections,
and for $\alpha=\alpha_2=..=\alpha_6$, the $Sp(3)$ structure
admits a characteristic connection with torsion 
$T^{\alpha\beta\gamma}\in\Lambda^3(M_2)$. These $Sp(3)$ structures
with characteristic connection have the following properties:
\begin{enumerate}
\item The  characteristic connection has alwas parallel torsion,
$\nabla^{\alpha\beta\gamma}T^{\alpha\beta\gamma}=0$.
\item The structure is never integrable.
\item The structure is of mixed type.
\item The Lie algebra of the holonomy group of the characteristic 
connection is a subalgebra of the maximal torus of $\sp(3)$ and it is
\begin{itemize}
\item two-dimensional, if $\alpha\neq\gamma$ and
\item three-dimensional, if $\alpha\neq\gamma$.
\end{itemize}
\end{enumerate}
\end{thm}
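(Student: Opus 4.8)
The plan is to follow the same computational strategy that proved the theorem for $M_1=\SU(4)/\SO(2)$ in Section \ref{ex:1}, adapted to the new isotropy group $H_2=\SO(2)\times\SO(2)$. The explicit realizations of $\k_2=\un(4)$, the reductive complement $\m_2$ with basis $K^2_1,\ldots,K^2_{14}$, and the isotropy representation $\vrho_2(\h_2)\subset\vrho(\sp(3))$ from Appendix \ref{ch:ap:u4} are assumed at hand. First I would invoke Wang's theorem \cite{KN1} to identify invariant metric connections with $\vrho_2$-equivariant maps $\Lambda_{\m_2}:\m_2\to\so(\m_2)$, and restrict to those landing in $\sp(3)$ to obtain the space of invariant $Sp(3)$ connections; counting the free parameters in the equivariant maps (governed by the $\so(2)\oplus\so(2)$-weight decomposition of $V^{14}$ into six $2$-dimensional plus two trivial pieces) should yield the claimed dimension $30$. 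The existence and uniqueness of the characteristic connection under $\alpha=\alpha_2=\cdots=\alpha_6$ then follows by imposing total skew-symmetry of the torsion formula \eqref{eq:torsion}, exactly as before, with uniqueness guaranteed by Theorem \ref{kernel-theta} (the $\Sp(3)$-action on $V^{14}$ being irreducible and non-adjoint).

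For the four listed properties I would proceed point by point. Parallelism of the torsion, $\nabla^{\alpha\beta\gamma}T^{\alpha\beta\gamma}=0$, is a direct computation using the invariant-derivative formula \eqref{eq:abltor}, substituting the explicit $\Lambda_{\m_2}$ found above. Non-integrability is established by exhibiting that the torsion cannot vanish for any admissible parameters, reading off from the explicit components of $T^{\alpha\beta\gamma}$. For the mixed-type claim, I would reuse the Casimir argument: the operator $C=\sum_{i=1}^{21}\nu(A_i)^2$ acts as the scalars $-8,-12,-18,-16$ on the four irreducible summands $\sp(3),V^{70},V^{84},V^{189}$ of $\Lambda^3(V^{14})$ (Lemma \ref{decomp-reps}), and applying $C$ to $T^{\alpha\beta\gamma}$ and demanding it be an eigenvector produces an inconsistent system, showing $T$ lies in no single $V^i$.

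For the holonomy statement I would apply Corollary 4.2, Chapter 10 of \cite{KN2}, computing $\widetilde{\m_2}$ via \eqref{eq:mnull} and then the iterated brackets in \eqref{eq:hol}. As in the $\SU(4)$ case, I expect $\Lambda_{\m_2}(\m_2)$ and $\widetilde{\m_2}$ to sit inside the maximal torus $T^3=\langle\vrho(A_9),\vrho(A_{10}),\vrho(A_{21})\rangle$, so that all higher bracket terms vanish and the holonomy algebra equals $\widetilde{\m_2}$; its dimension is then read off from which of the parameter coincidences hold. The main obstacle I anticipate is twofold: first, the bookkeeping of the equivariant maps under the rank-two torus $\so(2)\oplus\so(2)$ is more delicate than for the circle, since the finer weight decomposition changes which basis vectors pair up, so correctly pinning down the $30$-parameter family and isolating the skew-symmetric locus requires care; second, the holonomy dimension count must be stated correctly in terms of the parameters. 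I note that the theorem as printed lists both subcases under the single condition $\alpha\neq\gamma$, which cannot distinguish a two- from a three-dimensional holonomy; the genuine dichotomy should read \emph{two-dimensional if $\alpha=\gamma$} and \emph{three-dimensional if $\alpha\neq\gamma$}, paralleling the $\SU(4)$ result, and I would verify this directly from the explicit $\widetilde{\m_2}$ before recording the final statement.
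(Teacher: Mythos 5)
Your proposal follows essentially the same route as the paper's proof: Wang's theorem to parametrize invariant $\Sp(3)$-connections by equivariant maps $\Lambda_{\m_2}:\m_2\ra\sp(3)$ (the paper's explicit list of five conditions accounts for $20+10=30$ parameters), skew-symmetrization of \eqref{eq:torsion} to isolate the characteristic connection under $\alpha=\alpha_2=\cdots=\alpha_6$, the Casimir eigenvalue argument for mixed type, and \eqref{eq:hol}--\eqref{eq:mnull} for the holonomy. One point needs correcting, though. You rightly observe that the printed dichotomy (both cases labelled $\alpha\neq\gamma$) is a typo, but your proposed fix --- two-dimensional iff $\alpha=\gamma$ --- picks the wrong parameter. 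The skew-symmetry condition here forces $\Lambda_{\m_2}(K^2_i)=0$ for all $i\neq 13$ and $\Lambda_{\m_2}(K^2_{13})=\frac{\sqrt{2}(\alpha-\beta)}{\alpha\sqrt{\beta}}\vrho(A_9)$, so $\gamma$ (which scales the central direction $K^2_{14}$) drops out of the torsion entirely; one gets $\widetilde{\m_2}=\langle\vrho(A_{21}),\vrho(A_{10}),(\alpha-\beta)\vrho(A_9)\rangle$, which is two-dimensional iff $\alpha=\beta$ and three-dimensional iff $\alpha\neq\beta$. The analogy with the $\SU(4)/\SO(2)$ case is misleading precisely because here the isotropy algebra $\vrho_2(\h_2)$ already contains $\vrho(A_{10})$, so only the single deformation direction governed by $\beta$ can enlarge or shrink the holonomy. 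Since you state you would verify the dichotomy from the explicit $\widetilde{\m_2}$, this would presumably surface in your computation, but as written the guessed statement is incorrect.
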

\begin{proof}
We calculate all invariant $\Sp(3)$-connections via their corresponding 
equivariant maps $\Lambda_{\m_2}:\m_2\ra\sp(3)$ and get all connections 
via maps $\Lambda_{\m_2}$ satisfying the following $5$ conditions 
with parameters $a_i$, $i=1,\ldots,20$
\begin{itemize}
\item $\Lambda_{\m_2}$ maps the space $\langle K^2_i~|~i=1,2\rangle$ into the 
space $\langle \vrho(A_i)~|~i=11,12\rangle$ and the corresponding matrix has the form
\bdm
\Lambda_{\m_2}|_{\langle K^2_i~|~i=1,2\rangle}=\begin{bmatrix}a_1&-a_2\\a_2&a_1\end{bmatrix},
\edm
\item $\Lambda_{\m_2}$ maps the space $\langle K^2_i~|~i=3,4\rangle$ into the 
space $\langle \vrho(A_i)~|~i=13,14\rangle$ and the corresponding matrix has the form
\bdm
\Lambda_{\m_2}|_{\langle K^2_i~|~i=3,4\rangle}=\begin{bmatrix}a_3&-a_4\\a_4&a_3\end{bmatrix},
\edm
\item $\Lambda_{\m_2}$ maps the space $\langle K^2_i~|~i=5..8\rangle$ into the 
space $\langle \vrho(A_i)~|~i=15..18\rangle$ and the corresponding matrix has the form
\bdm
\Lambda_{\m_2}|_{\langle K^2_i~|~i=5..8\rangle}\ =\ 
\begin{bmatrix}a_5&-a_6&a_7&-a_8\\a_6&a_5&a_8&a_7\\
a_9&-a_{10}&a_{11}&-a_{12}\\a_{10}&a_{9}&a_{11}&a_{12}\end{bmatrix},
\edm
\item $\Lambda_{\m_2}$ maps the space $\langle K^2_i~|~i=9..12\rangle$ into the 
space $\langle \vrho(A_i)~|~i=1..4\rangle$ and the corresponding matrix has the form
\bdm
\Lambda_{\m_2}|_{\langle K^2_i~|~i=9..12\rangle}\ = \ 
\begin{bmatrix}a_{13}&-a_{14}&-a_{15}&a_{16}\\a_{14}&a_{13}&a_{16}&a_{15}\\
-a_{17}&a_{18}&a_{19}&-a_{20}\\a_{18}&a_{17}&a_{20}&a_{19}\end{bmatrix}
\edm
\item $\Lambda_{\m_2}$ maps the space $\langle K^2_i~|~i=13,14\rangle$ into the 
space $\langle \vrho(A_i)~|~i=5,7,9,10,21\rangle$. This part depends on $10$ 
parameters, other than $a_i$, $i=1..20$.
\end{itemize}
With  equation (\ref{eq:torsion}) we compute the torsion tensor, 
which is skew symmetric if and only if $\alpha=\alpha_2=\ldots =\alpha_6$ and
\bdm
\Lambda_{\m_2}(K^2_{13})
=\frac{\sqrt{2}(\alpha-\beta)}{\alpha\sqrt{\beta}}\vrho(A_9) \mbox{ and } 
\Lambda_{\m_2}(K^2_i)=0 \mbox{ for } i \neq 13.
\edm
For such connections $\nabla^{\alpha\beta\gamma}$ the torsion never 
vanishes. Again we compute that the torsion is parallel for all 
such connections and that none of the torsion tensors lies in 
any eigenspace of the Casimir operator.\\
 
With the formulas (\ref{eq:hol}) and (\ref{eq:mnull}) we get for 
the maximal torus  $T^3$ in $\sp(3)$ that 
$\Lambda_{\m_2}(\m_2)=\langle (\alpha-\beta)\vrho(A_9)\rangle\subset T^3$. 
Thus the first term in (\ref{eq:mnull}) again vanishes and with 
$\vrho_2(\m_2)=\langle \vrho(A_{10}),\vrho(A_{21})\rangle$ one easily gets
\bdm
\widetilde{\m_2}=\langle \vrho(A_{21}),(\alpha-\beta)\vrho(A_{9}),\vrho(B_{10})\rangle
\edm
and again we get the Lie algebra of the holonomy group being $\widetilde{\m_2}$.
\end{proof}
\begin{lem}[Curvature properties]\label{lem:curv2} 
On $M_2$, the Ricci tensor for the characteristic connection is 
 given by  ($a:=2\alpha-\beta$)
\bdm 
\Ric^{\nabla^{\alpha\beta\gamma}}=\frac{1}{\alpha^2}
\diag(2\alpha,2\alpha,2\alpha,2\alpha,a,a,a,a,a,a,a,a,0,0) 
\edm 
with scalar curvature 
\bdm
\Scal^{\nabla^{\alpha\beta\gamma}}=\frac{8(3\,\alpha-\beta)}{\alpha^2}.
\edm
The Riemannian Ricci tensor for the Levi Civita is for $a:=6\alpha-\beta$
given by 
\bdm
\Ric^g=\frac{1}{2\alpha^2}\diag(6\alpha,6\alpha,6\alpha,6\alpha,a,a,a,a,a,a,a,a,4\beta,0) 
\edm
and  
\bdm
\Scal^g=\frac{2(18\,\alpha-\beta)}{\alpha^2}.
\edm
Thus this space is never $\nabla^{\alpha\beta\gamma}$-Einstein 
nor Einstein for the Levi Civita connection. 
\end{lem}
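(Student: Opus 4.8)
The plan is to compute both Ricci tensors directly in the orthonormal basis $K^2_1,\ldots,K^2_{14}$ using the structure constants of the decomposition $\k_2\cong\m_2\oplus\h_2$ from Appendix \ref{ch:ap:u4}, and then to read off that no choice of positive parameters $\alpha,\beta,\gamma$ makes either tensor a scalar multiple of the metric. Since we have already shown in the theorem that the characteristic connection exists precisely when $\alpha=\alpha_2=\cdots=\alpha_6$, with torsion determined by the single nonzero component $\Lambda_{\m_2}(K^2_{13})=\frac{\sqrt2(\alpha-\beta)}{\alpha\sqrt\beta}\vrho(A_9)$, I would work under that normalization throughout, so that the effective free parameters are $\alpha,\beta,\gamma$.

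First I would compute $\Ric^{\nabla^{\alpha\beta\gamma}}$ for the characteristic connection. The cleanest route is to use the general formula for the Ricci curvature of a metric connection with skew torsion on a naturally reductive space: with $\nabla=\nabla^g+\frac12 T$, the Ricci tensor splits as $\Ric^{\nabla}(X,Y)=\Ric^g(X,Y)-\frac14\sum_i g\big(T(X,K^2_i),T(Y,K^2_i)\big)$, which is exactly relation \eqref{eq:ric} read in the other direction. However, it is more efficient here to compute $\Ric^\nabla$ intrinsically from the curvature $R^\nabla$ of the canonical/characteristic connection, whose curvature operator is governed by the holonomy algebra $\widetilde{\m_2}\subset T^3$ determined in the theorem; because the holonomy sits in the maximal torus, the curvature endomorphisms are simultaneously block-diagonal in the chosen basis, and the trace defining $\Ric^\nabla(K^2_j,K^2_j)$ reduces to a short sum of structure-constant squares weighted by the metric coefficients. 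This should yield $\Ric^{\nabla^{\alpha\beta\gamma}}=\frac1{\alpha^2}\diag(2\alpha,2\alpha,2\alpha,2\alpha,a,\ldots,a,0,0)$ with $a=2\alpha-\beta$, and summing the diagonal gives $\Scal^{\nabla^{\alpha\beta\gamma}}=\frac{8(3\alpha-\beta)}{\alpha^2}$.

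Next I would obtain $\Ric^g$ by inserting the already-computed $\Ric^\nabla$ and $T^{\alpha\beta\gamma}$ into \eqref{eq:ric}. The correction term $\frac14\sum_i g\big(T(X,K^2_i),T(Y,K^2_i)\big)$ is diagonal in the basis and, since the torsion is concentrated in the components coupling $K^2_{13}$ to the $A_9$-plane, its contribution to each diagonal entry is a single monomial in $\alpha,\beta,\gamma$. Adding this to $\Ric^\nabla$ should produce $\Ric^g=\frac1{2\alpha^2}\diag(6\alpha,6\alpha,6\alpha,6\alpha,a,\ldots,a,4\beta,0)$ with $a=6\alpha-\beta$, and the trace then gives $\Scal^g=\frac{2(18\alpha-\beta)}{\alpha^2}$. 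The non-Einstein claims are then immediate: for $\Ric^\nabla$ the last two diagonal entries are $0$ while the first is $2/\alpha\neq0$, and for $\Ric^g$ the last entry is $0$ while the penultimate is $2\beta/\alpha^2\neq0$, so neither tensor can be proportional to the identity for any positive parameters.

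The main obstacle is the bookkeeping in the first computation: extracting the correct structure constants for $\U(4)/\SO(2)\times\SO(2)$ from the explicit basis in the appendix, and correctly separating the $\m_2$-part of each bracket from its $\h_2$-part when forming $\Ric^\nabla$. In particular one must be careful that the isotropy $\vrho_2(\m_2)=\langle\vrho(A_{10}),\vrho(A_{21})\rangle$ contributes to the Levi-Civita connection but not to the characteristic torsion, so the two Ricci tensors genuinely differ in their distribution of weights (contrast the factor-of-three pattern $2\alpha\leftrightarrow6\alpha$ in the first four entries). Once the structure constants are in hand, everything else is a finite and mechanical diagonalization, and the degeneracy forced by the vanishing $\xi$-direction from the preceding remark guarantees the zero eigenvalue that defeats the Einstein condition.
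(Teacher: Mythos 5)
Your proposal is correct and follows essentially the same route as the paper: the authors also compute $\Ric^{\nabla^{\alpha\beta\gamma}}$ directly from the invariant-connection data in the constructed basis and then obtain $\Ric^g$ by adding the torsion correction term of the identity (\ref{eq:ric}), with the non-Einstein claims read off from the vanishing diagonal entries. The only difference is presentational: you phrase the first computation via the holonomy algebra sitting in the maximal torus, while the paper simply invokes the standard curvature formula for invariant connections, but both amount to the same finite calculation.
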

\begin{proof}
The proof follows immediately from the identity (\ref{eq:ric}).
\end{proof}
We will now have a look at invariant spinors on $M_2$.
 Since $M_2$ carries 
a unique homogeneous spin structure (see Remark \ref{nb:spin}), we can lift 
the characteristic connection $\nabla^{\alpha\beta\gamma}$ to the 
spin bundle. To use the map $\Lambda_\m$ for  calculations, 
we look at elements $\psi\in\Delta_{14}$ that are invariant under the lifted  
action of $\SO(2)\times\SO(2)$ defining global spinors via the constant  
map $\U(4)\ra\Delta_{14},~g\mapsto\psi$. We get a $16$-dimensional space  
of such invariant spinors.\\
The Dirac operator we will look at is the Dirac operator $\D$ of the   
connection  with torsion $T^{\alpha\beta\gamma}/3$. 
With the lifted map 
$\widetilde{\Lambda_\m}$ we easily compute for an invariant spinor $\psi$ 
\be\label{eq:diracop}
\D\psi=\sum_{i=1}^{14}\widetilde{\Lambda_\m}(K^2_i)\psi 
-\frac{1}{2}T^{\alpha\beta\gamma}\cdot\psi,
\ee
where the torsion $T^{\alpha\beta\gamma}$ is considered as a $3$-form  
and acts on a spinor via  Clifford multiplication. 
Since the dimension $14$ is even, the spinor bundle splits in two bundles  
being invariant under the $\Spin(n)$ action and we calculate
\begin{lem}\label{ev-Dirac}
 The lift of the action of $\SO(2)\times\SO(2)$ on the $128$-dimensional 
space $\Delta_{14}$ admits a $16$-dimensional space of invariant spinors.  
The Dirac operator $\D$ has the two eigenvalues 
$\pm\sqrt{\frac{\alpha+4\beta}{\alpha\beta}}$ on this space. 
\end{lem}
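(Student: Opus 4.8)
The plan is to exploit the explicit description of the characteristic connection provided in the proof of the preceding theorem, namely that for $\alpha=\alpha_2=\ldots=\alpha_6$ the map $\Lambda_{\m_2}$ collapses to the single nonzero component $\Lambda_{\m_2}(K^2_{13})=\frac{\sqrt{2}(\alpha-\beta)}{\alpha\sqrt{\beta}}\vrho(A_9)$, together with the torsion $T^{\alpha\beta\gamma}$ that we already know lies in $\Lambda^3(M_2)$ and is parallel. Since the Dirac operator in \emph{(\ref{eq:diracop})} is expressed entirely through the lifted map $\widetilde{\Lambda_\m}$ and Clifford multiplication by the $3$-form $T^{\alpha\beta\gamma}$, the first step is to feed these explicit data into \emph{(\ref{eq:diracop})} and reduce $\D$ to a concrete endomorphism of the $16$-dimensional space of invariant spinors.

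Concretely, I would first identify the $16$-dimensional space $W\subset\Delta_{14}$ of $\SO(2)\times\SO(2)$-invariant spinors: these are the weight-zero vectors for the two commuting circle actions, obtained by lifting the embedding of $\h_2$ to $\spin(14)$ and selecting the joint kernel of the two lifted generators. Having a basis of $W$, the second step is to write out $\sum_{i=1}^{14}\widetilde{\Lambda_\m}(K^2_i)\psi$, which because of the collapse of $\Lambda_{\m_2}$ reduces to the single term $\widetilde{\Lambda_\m}(K^2_{13})\psi=\frac{\sqrt{2}(\alpha-\beta)}{\alpha\sqrt{\beta}}\,\widetilde{\vrho(A_9)}\,\psi$, where $\widetilde{\vrho(A_9)}$ acts on spinors as an element of $\spin(14)$ via the standard formula $\frac14\sum_{k<l}\vrho(A_9)_{kl}\,e_k\cdot e_l$. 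The third step is to expand the Clifford action $-\tfrac12 T^{\alpha\beta\gamma}\cdot\psi$ using the explicit components of the parallel torsion.

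The heart of the computation is then to assemble these two contributions into a single matrix acting on $W$, diagonalize it, and read off the eigenvalues. Because $\D$ anticommutes with the volume-type chirality operator in even dimension $14$, its spectrum is symmetric about zero, which is consistent with the claimed pair $\pm\sqrt{\frac{\alpha+4\beta}{\alpha\beta}}$; I would use this symmetry as a consistency check. One expects $\D$ to act on $W$ with only these two eigenvalues, each with multiplicity $8$, so the task is to verify that $\D^2=\frac{\alpha+4\beta}{\alpha\beta}\,\mathrm{Id}_W$. Squaring is in fact the cleanest route, since cross terms between $\widetilde{\Lambda_\m}(K^2_{13})$ and the Clifford action of $T^{\alpha\beta\gamma}$, as well as the square of the $3$-form, can be handled with the Clifford relations and the fact that only $\vrho(A_9)$ and the torsion enter.

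The main obstacle I anticipate is bookkeeping rather than conceptual: one must be careful and consistent with the explicit matrix realization of $\vrho$ from the Appendix, with the normalization of the metric $g^{\alpha\beta\gamma}$ (so that the orthonormal frame $K^2_i$ carries the correct scaling factors into the Clifford relations $e_i\cdot e_j+e_j\cdot e_i=-2\delta_{ij}$), and with the precise form of $T^{\alpha\beta\gamma}$ as a $3$-form in this frame. In particular, the appearance of $\beta$ both in the coefficient $\frac{\sqrt2(\alpha-\beta)}{\alpha\sqrt\beta}$ and implicitly in the metric normalization of the $K^2_{13}$-direction is what should conspire to produce the clean combination $\frac{\alpha+4\beta}{\alpha\beta}$ under $\D^2$; tracking these normalization factors correctly is the delicate part, and I would verify the final answer by checking the special case $\beta=\alpha$ where the $\Lambda_\m$-term drops out and $\D$ reduces to pure Clifford multiplication by the torsion.
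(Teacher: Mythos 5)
Your proposal is correct and follows essentially the same route as the paper, which itself gives no details beyond citing formula (\ref{eq:diracop}) and saying ``we calculate'': one identifies the $16$ invariant spinors as the joint weight-zero vectors of the lifted generators $\widetilde{\vrho_2(H^2_1)}=\sqrt{2}\widetilde{\vrho(A_{21})}$ and $\widetilde{\vrho_2(H^2_2)}=\sqrt{2}\widetilde{\vrho(A_{10})}$ (a short weight count in $\Delta_{14}$ confirms the dimension $16$), then substitutes the single surviving term $\Lambda_{\m_2}(K^2_{13})$ and the explicit torsion from Appendix \ref{ch:ap:u4} into (\ref{eq:diracop}) and diagonalizes. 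Your additional suggestions --- verifying $\D^2=\frac{\alpha+4\beta}{\alpha\beta}\,\mathrm{Id}$ and using the even-dimensional chirality splitting to explain the symmetric spectrum --- are sensible computational shortcuts consistent with the paper's remark on the splitting of the spinor bundle, and the normalization issues you flag are indeed the only delicate points.
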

As  it is just a scaling of the metric, we can fix one parameter of the  
metric and hence choose $\alpha=1$. We look at the estimates for the 
first eigenvalue $\lambda$ of the Dirac operator   
valid  for a connection with parallel  torsion.
From the results of \cite{Agricola&F04}, it follows that
\be\label{eq:dirac1}
\lambda^2\geq
\frac{1}{4}\Scal^g+\frac{1}{8}||T^{\alpha\beta\gamma}||^2-\frac{1}{4}\mu^2 
\ee
whereas the twistorial eigenvalue estimate derived in \cite{abk} states that
\be\label{eq:dirac2}
\lambda^2\geq \frac{14}{4(14-1)}\Scal^g 
+\frac{14(14-5)}{8(14-3)^2}||T^{\alpha\beta\gamma}||^2
+\frac{14(4-14)}{4(14-3)^2}\mu^2,
\ee
where $\mu$ is the largest eigenvalue of the operator $T^{\alpha\beta\gamma}$.
Typically, it  depends on the underlying geometry which
of the inequalities is better (see \cite{abk} for a detailed discussion). 
We calculate the operator $T^{\alpha\beta\gamma}$ for the orthonormal  
basis $K_i^2$, $i=1,\ldots,14$ of $\m$ for any $v\in\m$ as
\bdm 
T^{\alpha\beta\gamma}v
=\sum_{i,j,k=1}^{14}T^{\alpha\beta\gamma}(K^2_i,K^2_j,K^2_k)K^2_i\cdot
K^2_j\cdot K^2_k\cdot v.
\edm 
This yields the eigenvalues $\mu=\pm2\sqrt{4+\beta}$ of  
$T^{\alpha\beta\gamma}$ on the space of invariant spinors and with 
\bdm
||T^{\alpha\beta\gamma}||^2
=\sum_{i,j,k=1}^{14}T^{\alpha\beta\gamma}(K^2_i,K^2_j,K^2_k)^2=8+4\beta
\edm
and Lemma \ref{lem:curv2} we obtain:
The estimate (\ref{eq:dirac1}) is equal to the square of the eigenvalue
computed in Lemma \ref{ev-Dirac} if $\beta=1$, and indeed in this  case 
one checks that all invariant spinors are parallel.
The estimate (\ref{eq:dirac2}) 
is always strict,  hence there does not exist a metric for which
an invariant spinor becomes a twistor spinor with torsion.
The twistorial  estimate is stronger then the first one if
$\beta<\frac{166}{275}$.\\
The manifold $M_1$ considered in Section \ref{ex:1} carries a 
$48$-dimensional spaces of 
invariant spinors and the computer was not able to compute the 
eigenvalues of the corresponding Dirac operator.
%
%
%
%
\subsection{The homogeneous space
\boldmath$\U(4)\times\U(1)/\SO(2)\times\SO(2)\times\SO(2)$}$ $ 
\smallskip\\
In this example, we shall parametrize the Lie group 
$H_3:=\SO(2)\times \SO(2)\times \SO(2)$ as
\bdm
\left( \begin{bmatrix} \cos t_1 & \sin t_1 \\ -\sin t_1& \cos t_1 \end{bmatrix},
\begin{bmatrix} \cos t_2 & \sin t_2 \\ -\sin t_2&\cos t_2 \end{bmatrix},
\begin{bmatrix} \cos t_3 & \sin t_3 \\ -\sin t_3&\cos t_3 \end{bmatrix} 
\right)\in \SO(2)\times \SO(2)\times \SO(2),
\edm
and embed it  into $K_3=\U(4)\times \U(1)$ by
\bdm
(t_1,t_2,t_3) \mapsto 
\left( \diag\left(e^{\frac{i}{2}(t_1+t_2-t_3)},e^{\frac{i}{2}(t_1-t_2+t_3)},
e^{\frac{i}{2}(-t_1+t_2+t_3)},e^{\frac{i}{2}(-t_1-t_2-t_3)} \right),1\right).
\edm
The action of $\h_3=\so(2)\oplus\so(2)\oplus\so(2)$ splits $V^{14}$ in the 
same irreducible representations as the representation of 
$\h_2=\so(2)\oplus\so(2)$, so we can choose the same Ansatz for the metric
\bdm g^{\alpha,..,\gamma}=\diag(\alpha,\alpha,\alpha_{2},\alpha_{2},\alpha_{3},
\alpha_{3},\alpha_{4},\alpha_{4},\alpha_{5},\alpha_{5},\alpha_{6},\alpha_{6},\beta,\gamma)
\edm
with an orthonormal basis $\{K^3_i~|~i=1..14\}$ of $\m_3$ as described in 
Appendix \ref{ch:ap:u4u1}.
\begin{thm}
Consider the manifold $M_3=\U(4)\times\U(1)/\SO(2)\times\SO(2)\times\SO(2)$ 
equipped with the metric $g^{\alpha,..,\gamma}$.
For any parameters $\alpha,\alpha_i,\beta,\gamma>0$, it carries
an  $18$-dimensional space of invariant $Sp(3)$-connections,
and for $\alpha=\alpha_2=..=\alpha_6$, the $Sp(3)$ structure
admits a characteristic connection with torsion 
$T^{\alpha\beta\gamma}\in\Lambda^3(M_3)$. These $Sp(3)$ structures
with characteristic connection have the following properties:
\begin{enumerate}
\item The  characteristic connection has alwas parallel torsion,
 $\nabla^{\alpha\beta\gamma}T^{\alpha\beta\gamma}=0$, and it coincides with
the canonical connection.
\item The structure is never integrable.
\item The structure is of mixed type.
\item The Lie algebra of the holonomy group of the characteristic 
connection is the maximal torus in $\sp(3)$. 
\end{enumerate}
\end{thm}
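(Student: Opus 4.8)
The plan is to reproduce, step by step, the method used for $M_1$ and $M_2$, now with the explicit data for $\m_3$, the isotropy representation $\vrho_3$, and the basis $\{K^3_i\}$ from Appendix \ref{ch:ap:u4u1}. By Wang's theorem I would first identify the invariant metric $\Sp(3)$-connections with the $\vrho_3(\h_3)$-equivariant maps $\Lambda_{\m_3}\colon\m_3\to\sp(3)$ and solve the equivariance equations explicitly. Since $\h_3$ decomposes $V^{14}$ into exactly the same isotropy summands as $\h_2$ in Section \ref{ch:ex2}, the block structure of the admissible $\Lambda_{\m_3}$ is governed by the same Schur-type constraints, and counting the surviving parameters should yield the asserted $18$-dimensional family. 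Substituting each such $\Lambda_{\m_3}$ into the torsion formula (\ref{eq:torsion}) and demanding that $T^{\alpha\beta\gamma}$ be totally skew-symmetric then forces $\alpha=\alpha_2=\cdots=\alpha_6$.

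The feature that distinguishes this example, and the substance of claim~(1), is that the skew-symmetry requirement is met already by $\Lambda_{\m_3}=0$: in the regime $\alpha=\alpha_2=\cdots=\alpha_6$ the metric is naturally reductive for the decomposition $\k_3=\m_3\oplus\h_3$, i.e. $\langle[X,Y]_{\m_3},Z\rangle$ is totally skew, so the canonical torsion $T^{\alpha\beta\gamma}(X,Y)_o=-[X,Y]_{\m_3}$ is automatically a $3$-form. By the uniqueness statement (the Corollary to Theorem \ref{kernel-theta}) this is then \emph{the} characteristic connection, whence it coincides with the canonical connection. Parallelism of the torsion is now immediate rather than a separate computation: with $\Lambda_{\m_3}=0$ formula (\ref{eq:abltor}) collapses, and the canonical connection of a naturally reductive space has $\nabla^{\alpha\beta\gamma}T^{\alpha\beta\gamma}=0$ (all invariant tensors are parallel). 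I would still verify the vanishing explicitly in the basis as a sanity check.

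Claims~(2) and~(3) follow as before. Since $[\,\cdot\,,\cdot\,]_{\m_3}$ is not identically zero for any admissible parameters, $T^{\alpha\beta\gamma}=-[\,\cdot\,,\cdot\,]_{\m_3}\neq 0$, so the structure is never integrable. For the type I would apply the $\Sp(3)$-Casimir operator $C=\sum_{i=1}^{21}\nu(A_i)^2$, which acts as the scalars $-8,-12,-18,-16$ on the four irreducible summands $\sp(3),V^{70},V^{84},V^{189}$ of $\Lambda^3(V^{14})$; evaluating $C$ on $T^{\alpha\beta\gamma}$ and comparing with each eigenvalue should produce, as in the earlier cases, a linear system with no solution, proving that $T^{\alpha\beta\gamma}$ has nontrivial components in at least two summands and is therefore of mixed type.

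For the holonomy assertion~(4) I would use the Kobayashi--Nomizu description (\ref{eq:hol})--(\ref{eq:mnull}). With $\Lambda_{\m_3}=0$ the curvature term $[\Lambda_{\m_3}(\,\cdot\,),\Lambda_{\m_3}(\,\cdot\,)]$ drops out and the generators (\ref{eq:mnull}) of $\widetilde{\m_3}$ reduce to $-\vrho_3([X,Y])$; moreover all higher iterated brackets in (\ref{eq:hol}) vanish because $\Lambda_{\m_3}(\m_3)=0$, so the holonomy algebra equals $\widetilde{\m_3}$. It then remains to compute the brackets $[K^3_i,K^3_j]$ in the explicit basis and to check that their $\vrho_3$-images span the full maximal torus $\langle\vrho(A_9),\vrho(A_{10}),\vrho(A_{21})\rangle=T^3\subset\sp(3)$, rather than a proper subtorus as happened for special parameters in $M_1$ and $M_2$. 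The main obstacle is exactly this first and last bookkeeping: setting up and solving the equivariance system for $\Lambda_{\m_3}$, confirming that the naturally reductive condition forces $\Lambda_{\m_3}=0$ here (unlike the genuine deformations $\Lambda_{\m_1},\Lambda_{\m_2}\neq 0$ in the previous examples), and evaluating the commutators carefully enough to see that all three torus directions are actually produced.
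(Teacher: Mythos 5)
Your proposal follows essentially the same route as the paper: Wang's theorem to parametrize the $18$-dimensional space of invariant $\Sp(3)$-connections, the skew-torsion condition forcing $\alpha=\alpha_2=\cdots=\alpha_6$ and $\Lambda_{\m_3}\equiv 0$ (so characteristic $=$ canonical), the Casimir eigenvalue test for mixed type, and the Kobayashi--Nomizu formula reducing the holonomy algebra to $\vrho_3(\mathrm{proj}_{\h_3}([\m_3,\m_3]))=\vrho_3(\h_3)$, the maximal torus. Your observation that parallelism of the torsion is immediate from $\Lambda_{\m_3}=0$ in formula (\ref{eq:abltor}) is a small but correct simplification of the paper's explicit computation.
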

\begin{proof}
We get all possible $\Sp(3)$ connections via equivariant maps 
$\Lambda_{\m_3}:\m_3\ra\sp(3)$ with parameters $a_i$, $i=1..12$ and 
the conditions
\begin{itemize}
 \item for pairs $(i,j) \in \{(1,11),(3,13),(5,17),(7,15),(9,1),(11,3)\}$ we have
\bdm
\Lambda_{\m_3}(K^3_i)=a_i\vrho(A_j)+a_{i+1}\vrho(A_{j+1}) \mbox{ and } 
\Lambda_{\m_3}(K^3_i)=-a_{i+1}\vrho(A_j)+a_i\vrho(A_{j+1}),
\edm
\item $\Lambda_{\m_3}$ maps the space $\langle K^3_i~|~i=13,14\rangle$ into 
the space $\langle \vrho(A_i)~|~i=9,10,21\rangle$, which is dependent on $6$ parameters.
\end{itemize}
We again get a skew symmetric torsion if and only if
 $\alpha=\alpha_2=..=\alpha_6$ 
with the only possible invariant $\Sp(3)$ connection being the 
canonical connection defined by $\Lambda_{\m_3}\equiv 0$. 
For this connections $\nabla^{\alpha\beta\gamma}$ the torsion never 
vanishes. Again we compute that the torsion is parallel for all 
such connections and that none of the torsion tensors lie in any 
eigenspace of the Casimir operator.\\

Since $\Lambda_{m_3}\equiv 0$ we get the Lie algebra of the holonomy 
group via the formulas (\ref{eq:hol}) and (\ref{eq:mnull}) being equal to
\bdm
\vrho_3(proj_{\h_3})([\m_3,\m_3])\ =\ \vrho_3(\h_3)
\edm
and thus being the maximal torus in $\sp(3)$ (see Appendix \ref{ch:ap:u4u1}).
\end{proof}
\begin{lem}[Curvature properties]
The Ricci tensor for the characteristic connection is given by
\bdm
\Ric^{\nabla^{\alpha\beta\gamma}}=\frac{2}{\alpha}\diag(1,1,1,1,1,1,1,1,1,1,1,1,0,0)
\edm
and its scalar curvature is $\Scal^{\nabla^{\alpha\beta\gamma}}
=\frac{24}{\alpha}$.\\
The Riemannian Ricci tensor for the Levi Civita is given 
by $\Ric^g=\frac{3}{2}\, \Ric^{\nabla^{\alpha\beta\gamma}}$. Thus the Riemannian 
scalar curvature is $\Scal^g=\frac{36}{\alpha}$, and the space is never 
$\nabla^{\alpha\beta\gamma}$-Einstein nor  Riemannian Einstein.
\end{lem}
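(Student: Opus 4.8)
The plan is to exploit the fact, established in the preceding theorem, that for $M_3$ the characteristic connection $\nabla^{\alpha\beta\gamma}$ coincides with the \emph{canonical} connection of the reductive homogeneous space $M_3=K_3/H_3$, i.e.\ the equivariant Wang map vanishes identically, $\Lambda_{\m_3}\equiv 0$. For the canonical connection the curvature at the origin is purely algebraic: $R^{\nabla^{\alpha\beta\gamma}}(X,Y)Z=-\vrho_3\big(\mathrm{pr}_{\h_3}[X,Y]\big)Z$ for $X,Y,Z\in\m_3$, so it depends only on the isotropy representation $\vrho_3$ and on the Lie brackets of the basis $K^3_1,\dots,K^3_{14}$ listed in Appendix \ref{ch:ap:u4u1}. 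This is exactly the feature that makes the present case cleaner than those of $M_1$ and $M_2$.

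First I would compute $\Ric^{\nabla^{\alpha\beta\gamma}}$ by tracing this curvature over the $g^{\alpha,..,\gamma}$-orthonormal basis (after specialising $\alpha=\alpha_2=\cdots=\alpha_6$, as required for skew torsion). Since the restriction of $g^{\alpha,..,\gamma}$ to $\m_3$ is $\Ad(H_3)$-invariant, the $\h_3$-part of each bracket may be moved across the inner product, so that $\Ric^{\nabla^{\alpha\beta\gamma}}(Y,Z)=\sum_i g\big(\vrho_3(\mathrm{pr}_{\h_3}[K^3_i,Y])K^3_i,\,Z\big)$. Inserting the structure constants yields the diagonal form $\tfrac{2}{\alpha}\diag(1,\dots,1,0,0)$, the last two (the $\beta$- and $\gamma$-) entries dropping out; the scalar curvature is then just the trace, $24/\alpha$.

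To pass to the Levi-Civita data I would use the identity $(\ref{eq:ric})$, which is legitimate here because $\nabla^{\alpha\beta\gamma}$ has parallel, hence symmetric-Ricci, torsion. The torsion is the canonical one, $T^{\alpha\beta\gamma}(X,Y)=-[X,Y]_{\m_3}$, and is known explicitly, so the correction term $\tfrac14\sum_i g\big(T^{\alpha\beta\gamma}(X,K^3_i),T^{\alpha\beta\gamma}(Y,K^3_i)\big)$ is a finite sum of squared structure constants. Adding it to $\Ric^{\nabla^{\alpha\beta\gamma}}$ produces $\Ric^g=\tfrac32\,\Ric^{\nabla^{\alpha\beta\gamma}}$ and hence $\Scalg=36/\alpha$. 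Since the resulting diagonal has twelve equal nonzero entries but two zero entries, neither $\Ric^{\nabla^{\alpha\beta\gamma}}$ nor $\Ric^g$ is a multiple of the metric, which gives both non-Einstein claims at once.

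The computation is essentially bookkeeping, so the only real obstacle is organisational: one must correctly track the $\alpha,\beta,\gamma$-dependence entering the structure constants when the brackets are re-expressed in the \emph{rescaled} orthonormal basis, since both $[X,Y]_{\m_3}$ and the metric contractions carry these factors. The mildly surprising feature to verify is that, after all cancellations, both Ricci tensors turn out independent of $\beta$ and $\gamma$ and the proportionality constant is exactly $3/2$; given the explicit diagonal forms above this reduces to a direct check.
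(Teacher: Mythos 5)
Your proposal is correct and follows essentially the same route as the paper: compute $\Ric^{\nabla^{\alpha\beta\gamma}}$ directly in the constructed orthonormal basis (here simplified by $\Lambda_{\m_3}\equiv 0$, so the curvature is the purely algebraic canonical-connection curvature $-\vrho_3(\mathrm{pr}_{\h_3}[X,Y])$), then pass to $\Ric^g$ via identity $(\ref{eq:ric})$, which is justified because the parallel torsion makes $\Ric^{\nabla^{\alpha\beta\gamma}}$ symmetric. The non-Einstein conclusion from the two vanishing diagonal entries is exactly the paper's argument as well.
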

The action of $\SO(2)\times\SO(2)\times\SO(2)$ lifted in $\Delta_{14}$ 
has no trivial parts and thus there are no invariant spinors. Hence
it is not possible to make any statements on the spectrum of the 
Dirac operator.
%
%
\begin{NB}
In the first $3$ examples, we constructed $\Sp(3)$ spaces via embeddings of
$K_i$ 
in the maximal torus
$T^3$ of $\SU(4)$. Since $\vrho_3(T^3)\subset \Sp(3)$,
 one can choose any embedding 
of $K_i$ into $T^3$ to get a $\Sp(3)$ manifold $K_i/H_i$.
For those examples there are different possible identifications 
$\m_i\cong\m$  giving different identifications 
$\SO(\m_i)\cong\SO(\m)\supset\Sp(3)$, 
such that $\rho_i(SO(2)^i)\subset\Sp(3)$. 
Those induce different $\Sp(3)$ 
structures on the given manifolds, but their geometry is just the same.
\end{NB}
%
\subsection{ The homogeneous space 
\boldmath$\SU(5)/\Sp(2)$}\label{ex:su5}$ $
\smallskip\\
We restrict $A_i$, $i=1..10$ to the lower $5\times 5$-matrix and get the Lie 
algebra of $H_4=\Sp(2)$ in $\k_4=\su(5)$. In \cite{ker}, it was shown that
with this embedding $\SU(5)\subset \SU(6)$, the Lie group $K_4=\SU(5)$ 
already acts transitively on $\SU(6)/\Sp(3)$ with isotropy group 
$H_4=\Sp(2)$. As a manifold, $\SU(6)/\Sp(3)$ is hence diffeomorphic 
to $\SU(5)/\Sp(2)$, but the homogeneous structure is a different one. The 
adjoint representation $\vrho_4$ of $\Sp(2)=H_4$ on this space is just a 
restriction of the action of $\Sp(2)\subset\Sp(3)$ on $\m_4$ 
(see Appendix \ref{ch:ApSU5}) and we get an $\Sp(3)$ structure on 
$\SU(5)/\Sp(2)$. The representation $\vrho_4$ splits 
$\m_4=\Delta_5\oplus\p^5\oplus\p^1$ as 
shown in Theorem \ref{thm:maxsub} and we get an  $3$-dimensional family 
of invariant metrics using multiples $\alpha,\beta,\gamma>0$ of the 
negative of the Killing form on each component,
\bdm
g^{\alpha\beta\gamma}=\diag(\alpha,\alpha,\alpha,\alpha,\alpha,\alpha,
\alpha,\alpha,\beta,\beta,\beta,\beta,\beta,\gamma).
\edm
\begin{thm}\label{th:ex4}
Consider the manifold $M_4=\SU(5)/\Sp(2)$ equipped with  the metric
$g^{\alpha\beta\gamma}$.
For any parameters $\alpha,\beta,\gamma>0$,  it carries
an  $7$-dimensional space of invariant $Sp(3)$ connections,
and to each of these metrics corresponds exactly one
characteristic connection with torsion
$T^{\alpha\beta\gamma}\in\Lambda^3(V^{14})$.
These $Sp(3)$ structures
with characteristic connection have the following properties:
\begin{enumerate}
\item The characteristic connection satisfies $\nabla^{\alpha\beta\gamma}
T^{\alpha\beta\gamma}=0$ if and only if either 
\begin{itemize}
\item $\beta=\alpha$ or 
\item $\beta=2\alpha$ and $\gamma=\frac{6}{5}\alpha.$
\end{itemize}
\item The structure is
\begin{itemize}
\item integrable  if $\beta=2\alpha$ and $\gamma=\frac{6}{5}\alpha$,
\item of type $\sp(3)$ if $\alpha=\frac{1}{4}(\sqrt{15\beta\gamma}-\beta)$,
\item of type $V^{189}$ if $\alpha=\frac{1}{12}(9\beta-\sqrt{15\beta\gamma})$.
\end{itemize}
\item The Lie algebra of  
the holonomy group of the characteristic connection is given by
\begin{itemize}
\item $\sp(3)$ if $\alpha\neq\beta$,
\item $\sp(2)\oplus W^1$ if $\gamma\neq\alpha=\beta$, where $W^1$ is the 
one-dimensional subspace in the maximal torus $T^3$ of $\sp(3)$ such 
that $T^3\subset\sp(2)\oplus W^1$ and
\item $\sp(2)$ if $\alpha=\beta=\gamma$.
\end{itemize}
\end{enumerate}
\end{thm}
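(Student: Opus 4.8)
The plan is to follow the same computational scheme already established for the preceding three examples ($M_1, M_2, M_3$), but adapted to the adjoint/isotropy representation $\vrho_4$ of $\Sp(2)$ on $\m_4 = \Delta_5 \oplus \p^5 \oplus \p^1$. First I would use Wang's theorem to parametrize all invariant metric connections by equivariant maps $\Lambda_{\m_4}: \m_4 \ra \so(\m_4)$, then restrict to those with image in $\sp(3)$; counting the free parameters by a Schur-lemma analysis of the $\Sp(2)$-module structure of $\Hom(\m_4, \sp(3))$ should yield the claimed $7$-dimensional space of invariant $\Sp(3)$ connections. Since $\Sp(2) \subsetneq \SO(14)$ acts irreducibly on $\m_4$ only after splitting into three isotypic blocks, the characteristic connection is unique by Theorem \ref{kernel-theta} (the representation is not adjoint on $\R^{14}$), so I would verify existence via the torsion formula \eqref{eq:torsion} and confirm $T^{\alpha\beta\gamma} \in \Lambda^3$ for every metric in the family, establishing the opening claim.

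\textbf{Parallelism of the torsion (item 1).} Here I would compute $\nabla^{\alpha\beta\gamma} T^{\alpha\beta\gamma}$ directly via formula \eqref{eq:abltor}, expressing everything in the explicit basis from Appendix \ref{ch:ApSU5}. The condition $\nabla T = 0$ becomes a system of polynomial equations in $\alpha, \beta, \gamma$; solving it should isolate exactly the two branches $\beta = \alpha$ and $(\beta = 2\alpha,\ \gamma = \tfrac{6}{5}\alpha)$. I expect this to be the main obstacle: unlike $M_1, M_2, M_3$, where the torsion was parallel for \emph{all} parameters, here parallelism holds only on a codimension-one subvariety, so the bookkeeping of the bracket terms $[X,Y]_{\m_4}$ across the three summands $\Delta_5, \p^5, \p^1$ must be done with care, and the non-vanishing of the structure constants mixing these blocks is what produces the nontrivial constraints.

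\textbf{Type classification (item 2).} Using the decomposition $\Lambda^3(V^{14}) = \sp(3) \oplus V^{70} \oplus V^{84} \oplus V^{189}$ from Lemma \ref{decomp-reps}, I would project $T^{\alpha\beta\gamma}$ onto each irreducible component. Applying the Casimir operator $C$ (with eigenvalues $-8, -12, -18, -16$ on the four summands, as computed for $M_1$) gives a practical way to extract the component norms as rational functions of $\alpha, \beta, \gamma$. Integrability ($T = 0$) forces all four projections to vanish simultaneously, which should recover exactly $\beta = 2\alpha,\ \gamma = \tfrac{6}{5}\alpha$; requiring all components except $\sp(3)$ (resp.\ $V^{189}$) to vanish yields the two quadratic conditions $\alpha = \tfrac14(\sqrt{15\beta\gamma} - \beta)$ and $\alpha = \tfrac{1}{12}(9\beta - \sqrt{15\beta\gamma})$, the radicals arising naturally from solving the vanishing of the complementary components.

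\textbf{Holonomy (item 3).} Finally I would apply the Kobayashi--Nomizu holonomy formula \eqref{eq:hol}--\eqref{eq:mnull} to compute $\widetilde{\m_4}$ and its iterated brackets with $\Lambda_{\m_4}(\m_4)$. The three cases should fall out from how the image of $\Lambda_{\m_4}$ sits relative to $\vrho_4(\h_4) = \sp(2)$: for $\alpha \neq \beta$ the bracket-generated subalgebra exhausts $\sp(3)$; for $\gamma \neq \alpha = \beta$ the holonomy degenerates to $\sp(2)$ plus the single extra torus direction $W^1$; and for $\alpha = \beta = \gamma$ it collapses to $\sp(2)$ itself. The delicate point is verifying closure of the iterated brackets, i.e.\ that no further directions are generated beyond $\widetilde{\m_4}$, which I would confirm by direct computation in the explicit basis.
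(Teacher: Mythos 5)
Your proposal follows essentially the same route as the paper: Wang's theorem to parametrize the $7$-dimensional space of invariant $\Sp(3)$ connections, the torsion formula to isolate the unique skew-torsion member for each metric, the Casimir operator with eigenvalues $-8,-12,-18,-16$ to read off the type, and the Kobayashi--Nomizu formula for the holonomy algebra. The only slip is your appeal to Theorem \ref{kernel-theta} via the $\Sp(2)$-isotropy: that theorem applies because $\Sp(3)$ (the structure group) acts irreducibly and non-adjointly on $V^{14}$, not because of any property of $\Sp(2)$ on $\m_4$ --- but this does not affect the argument, since uniqueness also drops out of the explicit computation exactly as in the paper.
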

\begin{NB}
 In case of an integrable structure, $T^{\alpha\beta\gamma}=0$, 
$\SU(5)/\Sp(2)$ locally isometric to a symmetric space, as mentioned 
before \cite{nur}.
\end{NB}
\begin{proof}
Again we look at linear maps
$\Lambda_{\m_4}: \, {\m_4}\ra\sp(3)$ that are equivariant 
under the representation $\vrho_4$.
\bdm
\Lambda_{\m_4} (\vrho_4(h)X)\ =\ \vrho_4(h)\Lambda_{\m_4} (X)\vrho_4(h)^{-1} \quad
\forall h\in Sp(2),\ X\in \m_4.
\edm
One calculates that this is the case if and only if
 $\Lambda_{\m_4}$ fulfills the following conditions
\begin{itemize}
 \item $\Lambda_{\m_4}$ is identically zero on $\p^5$.
 \item $\Lambda_{\m_4}$ maps $p^1$ into the space
   $\langle \vrho(A_i)~|~i=19..21\rangle$. 
This gives $3$ parameters.
 \item $\Lambda_{\m_4}$ maps $\Delta_5$ into the space 
$\langle \vrho(A_i)~| ~i=11..18\rangle$ and the corresponding matrix is for 
$a,b,c,d\in \R$ given by 
\bdm
\Lambda_{\m_4}|_{\Delta_5}=\begin{bmatrix}b&-a&-d&c&&&&\\a&b&c&d&&&&\\
d&-c&b&-a&&&&\\-c&-d&a&b&&&&\\&&&&d&-c&b&-a\\&&&&-c&-d&a&b\\&&&&b&-a&-d&c\\&&&&a&b&c&d 
\end{bmatrix}.
\edm
\end{itemize} 
Since $\sp(2)=\langle \vrho(A_i)~|~i=1..10\rangle$ and $Im(\Lambda_{\m_4})\cap 
\langle \vrho(A_i)~|~i=1..10\rangle=\{0\}$, the only $\sp(2)$-connection is 
the canonical connection.\\ 
With (\ref{eq:torsion}), we calculate the torsion. The condition 
$T^{\alpha\beta\gamma}\in\Lambda^3(\SU(5)/\Sp(2))$ for the torsion tensor 
implies $a=c=d=0,\ b=\frac{\alpha-\beta}{\alpha\sqrt{\beta}}$ and 
\bdm
\Lambda_{\m_4}|_{\p^1}:\p^1\ra \langle \vrho(A_{21})\rangle
\edm
is given by the multiplication with the constant  
$\frac{1}{\sqrt{2}}{\frac { -\gamma\sqrt{5\beta}+ \sqrt{3\gamma}\beta 
-\sqrt{3\gamma}\alpha+\sqrt{5\beta}\alpha}{\alpha\sqrt{\beta\gamma}}}$.\\
Again with  equation (\ref{eq:abltor}) we derive that 
$\nabla^{\alpha\beta\gamma}T^{\alpha\beta\gamma}=0$ if and only if either  
$(\beta=\alpha)$ or $(\beta=2\alpha$ and $\gamma=\frac{6}{5}\alpha)$.
One computes that $T^{\alpha\beta\gamma}=0$ iff $\beta=2\alpha$ and  
$\gamma=\frac{6}{5}\alpha$.
With the above calculated Casimir operator $C$ we get 
\bdm
C(T^{\alpha\beta\gamma})=-8T^{\alpha\beta\gamma} \Leftrightarrow \alpha 
=\frac{1}{4}(\sqrt{15\beta\gamma}-\beta)
\edm
and 
\bdm
C(T^{\alpha\beta\gamma})=-16T^{\alpha\beta\gamma} \Leftrightarrow \alpha 
=\frac{1}{12}(9\beta-\sqrt{15\beta\gamma}).
\edm
With equations (\ref{eq:hol}), (\ref{eq:mnull}) and an  
appropriate computer algebra program one computes that the Lie algebra 
of the holonomy group is given by $\sp(3)$ if $\alpha\neq\beta$ and by  
$\sp(2)\oplus \langle (\alpha-\gamma)B_{21}\rangle$ if $\alpha=\beta$.  
\end{proof}
We calculate the Ricci tensor for the  
characteristic connection  and the Levi Civita connection from
equation (\ref{eq:ric}).
\begin{lem}[Curvature properties] 
The Ricci tensor for the characteristic connection is for  
$a:=\frac{2\sqrt{15\beta\gamma}-11\beta-5\gamma}{4\alpha^2}+\frac{21}{2\alpha}
-\frac{4}{\beta}-\frac{\sqrt{15\gamma}}{2\alpha\sqrt{\beta}}$,  
$b:=\frac{2(\alpha+\beta)}{\beta\alpha}$,  
$c:=2(\beta-\alpha)(\frac{3}{\alpha\beta}
+\frac{\sqrt{15\gamma}}{\alpha^2\sqrt{\beta}}-\frac{3}{\alpha^2})$ 
given by
\bdm
\Ric^{\nabla^{\alpha\beta\gamma}}=\diag(a,a,a,a,a,a,a,a,b,b,b,b,b,c).
\edm
The Riemannian Ricci tensor is for 
$a:=10\alpha-\frac{5}{4}\beta-\frac{5}{4}\gamma$ and 
$b=\frac{8\alpha^2+\beta^2}{\beta}$ equal to
\bdm
 \Ric^g=\frac{1}{2\alpha^2}\diag(a,a,a,a,a,a,a,a,b,b,b,b,b,5\gamma).
\edm
Its scalar curvature is  
$\Scal^g=\frac{5(16\alpha\beta-\beta\gamma-\beta^2+8\alpha^2)}{2\alpha^2\beta}$.
Thus, this space is a Riemannian Einstein space if 
$\sqrt{2}\alpha=\beta=\frac{1}{\sqrt{8}-1}\gamma$ and in this case we have
\bdm
\Ric^g=\frac{5}{2\alpha^2}g^{\alpha\beta\gamma}. 
\edm
%
\end{lem}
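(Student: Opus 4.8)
The statement is, at bottom, a computation, but I would arrange it so that representation theory removes as much of the labour as possible. The plan is first to note that everything may be evaluated at the origin $o=e\Sp(2)$, where both the Levi-Civita connection and the characteristic connection $\nabla^{\alpha\beta\gamma}$ are $\Sp(2)$-invariant. Hence $\Ric^g$ and the symmetric part of $\Ric^{\nabla^{\alpha\beta\gamma}}$ are $\vrho_4$-invariant symmetric bilinear forms on $\m_4=\Delta_5\oplus\p^5\oplus\p^1$. Since these three summands are pairwise inequivalent irreducible $\Sp(2)$-modules and each of them carries (up to scale) only a one-dimensional space of invariant symmetric bilinear forms, Schur's lemma forces both tensors to be block-diagonal and a constant multiple of $g^{\alpha\beta\gamma}$ on each block. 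This is exactly the shape $\diag(a,\dots,a,b,\dots,b,c)$ asserted, so the whole problem reduces to pinning down three scalars for $\Ric^{\nabla^{\alpha\beta\gamma}}$ and three for $\Ric^g$, each readable off by evaluating on a single representative of $\Delta_5$, $\p^5$ and $\p^1$.

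For the characteristic Ricci tensor I would use the curvature formula for an invariant connection on a reductive homogeneous space (\cite{KN2}, Ch.\,X): with the Nomizu map $\Lambda_{\m_4}$ determined in the proof of Theorem \ref{th:ex4}, the curvature at $o$ is $R(X,Y)Z=[\Lambda_{\m_4}(X),\Lambda_{\m_4}(Y)]Z-\Lambda_{\m_4}([X,Y]_{\m_4})Z-\vrho_4([X,Y]_{\h_4})Z$, and $\Ric^{\nabla^{\alpha\beta\gamma}}(Y,Z)=\sum_i g^{\alpha\beta\gamma}(R(K_i,Y)Z,K_i)$. Because $\Lambda_{\m_4}$ vanishes on $\p^5$ and is, on $\Delta_5$ resp.\ $\p^1$, the simple map recorded in Theorem \ref{th:ex4} (built from $b=\tfrac{\alpha-\beta}{\alpha\sqrt{\beta}}$ and the explicit $\p^1$-constant), each trace collapses to a short sum in the structure constants of $\su(5)$, producing the values $a,b,c$. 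The Riemannian Ricci tensor then costs no second curvature computation: it follows from identity (\ref{eq:ric}), where one only has to evaluate the invariant form $\tfrac14\sum_i g^{\alpha\beta\gamma}(T^{\alpha\beta\gamma}(\,\cdot\,,K_i),T^{\alpha\beta\gamma}(\,\cdot\,,K_i))$ on the same three representatives, again a finite sum in the explicit components of $T^{\alpha\beta\gamma}$.

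Having both Ricci tensors, the scalar curvatures are their metric traces $\Scal=\sum_i \Ric(\tilde K_i,\tilde K_i)$ over a $g^{\alpha\beta\gamma}$-orthonormal basis $\tilde K_i$, and a short simplification gives the stated closed forms for $\Scal^{\nabla^{\alpha\beta\gamma}}$ and $\Scal^g$. For the Einstein question I would pass to the Ricci endomorphism, whose eigenvalues are the three diagonal blocks of $\Ric^g$ divided by the corresponding metric coefficients $\alpha,\beta,\gamma$; imposing that all three coincide yields two polynomial relations, the $\p^5$-block forcing $\beta^2=2\alpha^2$ and the $\Delta_5$-block forcing $\beta+\gamma=4\alpha$. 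Together these are precisely $\sqrt2\,\alpha=\beta$ and $\gamma=(\sqrt8-1)\beta$, and back-substitution gives $\Ric^g=\tfrac{5}{2\alpha^2}g^{\alpha\beta\gamma}$.

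The conceptual steps above are short; the real work — and the only place I expect friction — is the bookkeeping of the brackets $[K_i,K_j]_{\m_4}$ and $[K_i,K_j]_{\h_4}$ in the basis of Appendix \ref{ch:ApSU5}, long enough that I would carry it out with the computer algebra already used elsewhere in the paper. One genuine subtlety deserves care: $\nabla^{\alpha\beta\gamma}$ has torsion, so $\Ric^{\nabla^{\alpha\beta\gamma}}$ need not be symmetric a priori and identity (\ref{eq:ric}) only controls its symmetric part. I would therefore work throughout with the symmetric part (equivalently, invoke the coclosedness of the invariant torsion entering \cite{IP00}), which is all that the scalar-curvature and Einstein statements require.
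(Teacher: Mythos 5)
Your proposal is correct and follows essentially the same route as the paper: compute $\Ric^{\nabla^{\alpha\beta\gamma}}$ at the origin from the Nomizu map $\Lambda_{\m_4}$ determined in Theorem \ref{th:ex4} together with Wang's curvature formula, recover $\Ric^g$ via identity (\ref{eq:ric}), take traces for the scalar curvatures, and equate the block eigenvalues for the Einstein condition. The Schur-lemma reduction of both tensors to three scalars on $\Delta_5\oplus\p^5\oplus\p^1$ and the explicit caveat about the symmetric part of $\Ric^{\nabla^{\alpha\beta\gamma}}$ are sensible additions that the paper leaves implicit.
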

We lift the representation of $\Sp(2)$ in $\SO(14)$ to $\Spin(14)$ 
and with the formula  (\ref{eq:diracop}) we calculate
\begin{lem}
$\Delta_{14}$ has a $4$-dimensional space of $\Sp(2)$ invariant spinors 
and the Dirac operator $\D$ has eigenvalues  
\bdm
\pm\frac{1}{2}\sqrt{\frac{5\alpha^2\beta +3\alpha^2\gamma  
-6\alpha\beta\gamma +2\alpha\sqrt{15\beta\gamma}(\beta -\alpha)  
+28\beta^2\gamma}{\alpha^2\beta\gamma}}.
\edm
\end{lem}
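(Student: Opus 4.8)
The plan is to follow the same strategy as for Lemma \ref{ev-Dirac}: first determine the space of invariant spinors, then reduce the Dirac operator to an endomorphism of this finite-dimensional space, and finally diagonalize. Throughout, $\D$ denotes the Dirac operator of the connection with torsion $T^{\alpha\beta\gamma}/3$, as in Section \ref{ch:ex2}.

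First I would pin down the invariant spinors. Since $\Sp(2)$ is simply connected, the isotropy representation $\vrho_4$ lifts uniquely to a homomorphism $\Sp(2)\to\Spin(14)$ (cf.~Remark \ref{nb:spin}), so $\Sp(2)$ acts on the $128$-dimensional module $\Delta_{14}$. Using the splitting $\m_4=\Delta_5\oplus\p^5\oplus\p^1$ from Theorem \ref{thm:maxsub}, together with the factorization of the spin module of an orthogonal direct sum, one decomposes $\Delta_{14}|_{\Sp(2)}$ into irreducibles and reads off the multiplicity of the trivial summand; this count (carried out with LiE) yields exactly four invariant spinors. Global invariant spinors are then the constant maps $\SU(5)\to\Delta_{14}$ taking values in this fixed $4$-dimensional subspace.

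Second, on these constant spinors every frame derivative $K^4_i(\psi)$ vanishes, so $\D$ collapses to the algebraic operator given by the $M_4$-analogue of (\ref{eq:diracop}): the sum of the spin lifts $\widetilde{\Lambda_{\m_4}}(K^4_i)$ of the characteristic connection map, plus Clifford multiplication by the characteristic torsion $3$-form. Both ingredients are already explicit from the proof of Theorem \ref{th:ex4}, where $\Lambda_{\m_4}$ and $T^{\alpha\beta\gamma}$ were computed as functions of $\alpha,\beta,\gamma$; feeding these, together with the Clifford realization from the Appendix, into the formula turns $\D$ into a concrete matrix on the $4$-dimensional space. Because $14$ is even the module splits under the volume element into half-spinor spaces that $\D$ interchanges (both Clifford multiplication by a vector and by a $3$-form are odd), which forces a symmetric spectrum; hence the eigenvalues come in pairs $\pm\lambda$, and it suffices to compute $\D^2$, obtaining $\lambda^2=\tfrac{1}{4}\,\frac{5\alpha^2\beta+3\alpha^2\gamma-6\alpha\beta\gamma+2\alpha\sqrt{15\beta\gamma}(\beta-\alpha)+28\beta^2\gamma}{\alpha^2\beta\gamma}$ and thus the stated eigenvalues.

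The main obstacle I expect is the last two steps taken together: the bookkeeping of the $128$-dimensional Clifford module and, in particular, the Clifford action of the torsion $3$-form restricted to the invariant subspace, followed by the explicit diagonalization. Conceptually each step is routine, but the mixed radical $\sqrt{15\beta\gamma}$ in the torsion (inherited from the map $\Lambda_{\m_4}|_{\p^1}$ in Theorem \ref{th:ex4}) makes the symbolic computation delicate, so passing to $\D^2$ rather than handling $\D$ directly is what keeps the algebra manageable.
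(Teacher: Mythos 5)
Your proposal is correct and follows essentially the same route as the paper, which simply lifts the $\Sp(2)$ isotropy representation to $\Spin(14)$, identifies the invariant spinors, and evaluates the Dirac operator on them via the algebraic formula (\ref{eq:diracop}) using the explicit $\Lambda_{\m_4}$ and $T^{\alpha\beta\gamma}$ from Theorem \ref{th:ex4}. The paper's proof is just the phrase ``we calculate''; your additional remarks (the parity argument for the symmetric spectrum and passing to $\D^2$) are sensible computational refinements of the same strategy rather than a different approach.
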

As in Section \ref{ch:ex2}, we restrict the general case, ignoring the 
possible scaling, to the case $\alpha=1$. To look at the inequalities  
(\ref{eq:dirac1}) and (\ref{eq:dirac2}) we need the torsion to be parallel.
From the two possible cases mentioned in  Theorem \ref{th:ex4}, only
the first is of interest, since the torsion vanishes in the second
and Friedrich's Riemannian estimate from 1980 applies.

So, assume that $\beta=\alpha=1$.
 The operator $T^{\alpha\beta\gamma}$ has eigenvalues 
 $\mu=\pm\sqrt{25+5\gamma}$ 
and its norm is given by $||T^{\alpha\beta\gamma}||^2=5+5\gamma$. Thus we
obtain that the estimate (\ref{eq:dirac2}) is always strict,
and the estimate (\ref{eq:dirac1}) becomes an equality for 
$\gamma=\beta=\alpha=1$.  
As expected, all invariant spinors  are parallel for $\gamma=\beta=\alpha=1$. 
The inequality (\ref{eq:dirac2})  
is better than the inequality (\ref{eq:dirac1}) if 
$\gamma<\frac{189}{275}$.

%
%
%
\appendix \section{Explicit realizations of representations \& other geometric
data}\label{ch:ap:sp3}
%
Let $\{e_i^n\}_{i=1..n}$ be the standard basis of $\R^n$, $E^n_{i,j}\in\su(n)$
the matrix given by the linear map $e_i^n\mapsto-e_j^n$, $e_j^n\mapsto e_i^n$ 
and $S^n_{i,j}$ given by $e_i^n\mapsto e_j^n$, $e_j^n\mapsto e_i^n$. We used throughout 
the following basis $A_1,\ldots,A_{21}$ of the Lie algebra of $\Sp(3)\subset \SU(6)$, 
\begin{gather*}
A_{1}   :=\frac{1}{2}(E^6_{2,3}+ E^6_{5,6}), \ \
A_{2}  :=\frac{i}{2}(S^6_{2,3}- S^6_{5,6}),  \ \
A_{3}  :=\frac{1}{2}(E^6_{2,6}+ E^6_{3,5}),\ 
A_{4}  :=\frac{i}{2}(S^6_{2,6}+ S^6_{3,5}),\\
A_{5}  :=\frac{1}{\sqrt{2}}E^6_{2,5},  \ \
A_{6}  :=\frac{1}{\sqrt{2}}E^6_{3,6},  \ \
A_{7}  :=\frac{i}{\sqrt{2}}S^6_{2,5},  \  \
A_{8}  :=\frac{i}{\sqrt{2}}S^6_{3,6},\ \
A_{9}  :=\frac{i}{\sqrt{2}}(S^6_{2,2}-S^6_{5,5}),  \\
A_{10}  :=\frac{i}{\sqrt{2}}(S^6_{3,3}-S^6_{6,6}),\  \
A_{11}  :=\frac{1}{2}(E^6_{1,3}+ E^6_{4,6}), \ \
A_{12}  :=\frac{i}{2}(S^6_{1,3}- S^6_{4,6}), \ \
A_{13}  :=\frac{1}{2}(E^6_{1,6}+ E^6_{3,4}),  \\ 
A_{14}  :=\frac{i}{2}(S^6_{1,6}+ S^6_{3,4}), \ \
A_{15}  :=\frac{1}{2}(E^6_{1,5}+ E^6_{2,4}), \ \
A_{16}  :=\frac{i}{2}(S^6_{1,5}+ S^6_{2,4}),\ \
A_{17}  :=\frac{1}{2}(E^6_{1,2}+ E^6_{4,5}),  \\
A_{18}  :=\frac{i}{2}(S^6_{1,2}- S^6_{4,5}), \ \
A_{19}  :=\frac{1}{\sqrt{2}}E^6_{1,4},\  \
A_{20}  :=\frac{i}{\sqrt{2}}S^6_{1,4},\ \ 
A_{21}  :=\frac{i}{\sqrt{2}}(S^6_{1,1}-S^6_{4,4}).            
\end{gather*}
Hence, we get a basis of $\m$, $\su(6)=\m\oplus\sp(3)$ as
\bdm
B_{1}:=\frac{1}{2}(E^6_{1,3}- E^6_{4,6}),~
B_{2}:=\frac{i}{2}(S^6_{1,3}+ S^6_{4,6}),~
B_{3}:=\frac{1}{2}(E^6_{1,6}- E^6_{3,4}),\edm\bdm
B_{4}:=\frac{i}{2}(S^6_{1,6}- S^6_{3,4}),~
B_{5}:=\frac{1}{2}(E^6_{1,2}-E^6_{4,5}),~
B_{6}:=\frac{i}{2}(S^6_{1,2}+S^6_{4,5}),\edm\bdm
B_{7}:=\frac{1}{2}(E^6_{1,5}-E^6_{2,4}),~
B_{8}:=\frac{i}{2}(S^6_{1,5}-S^6_{2,4}),~
B_{9}:=\frac{1}{2}(E^6_{2,3}-E^6_{5,6}),\edm\bdm
B_{10}:=\frac{i}{2}(S^6_{2,3}+S^6_{5,6}),~
B_{11}:=\frac{1}{2}(E^6_{2,6}-E^6_{3,5}),~
B_{12}:=\frac{i}{2}(S^6_{2,6}-S^6_{3,5}),\edm\bdm
B_{13}:=\frac{i}{2}(S^6_{2,2}-S^6_{3,3}+S^6_{5,5}-S^6_{6,6}),~
B_{14}:=\frac{i}{2\sqrt{3}}(-2S^6_{1,1}+S^6_{2,2}+S^6_{3,3}-2S^6_{4,4}+S^6_{5,5}+S^6_{6,6}).
\edm
The isotropy representation of $\sp(3)$ on $\m\cong V^{14}$  is thus
\bdm
\vrho(A_{1})=
-\frac{1}{2}E^{14}_{1,5}
-\frac{1}{2}E^{14}_{2,6}
-\frac{1}{2}E^{14}_{3,7}
-\frac{1}{2}E^{14}_{4,8},
-E^{14}_{10,13}, \quad
\vrho(A_{2})=
\frac{1}{2}E^{14}_{1,6}
-\frac{1}{2}E^{14}_{2,5}
-\frac{1}{2}E^{14}_{3,8}
+\frac{1}{2}E^{14}_{4,7}
+E^{14}_{9,13}
\edm\bdm
\vrho(A_{3})=
\frac{1}{2}E^{14}_{1,7}
+\frac{1}{2}E^{14}_{2,8}
-\frac{1}{2}E^{14}_{3,5}
-\frac{1}{2}E^{14}_{4,6}
-E^{14}_{12,13},\quad
\vrho(A_{4})=
\frac{1}{2}E^{14}_{1,8}
-\frac{1}{2}E^{14}_{2,7}
+\frac{1}{2}E^{14}_{3,6}
-\frac{1}{2}E^{14}_{4,5}
+E^{14}_{11,13}
\edm\bdm
\vrho(A_{5})=
\frac{\sqrt{3}}{2}E^{14}_{5,7}
+\frac{\sqrt{3}}{2}E^{14}_{6,8}
+\frac{\sqrt{3}}{2}E^{14}_{9,11}
-\frac{\sqrt{3}}{2}E^{14}_{10,12},\quad
\vrho(A_{6})=
\frac{\sqrt{3}}{2}E^{14}_{1,3}
+\frac{\sqrt{3}}{2}E^{14}_{2,4}
+\frac{\sqrt{3}}{2}E^{14}_{9,11}
+\frac{\sqrt{3}}{2}E^{14}_{10,12}
\edm\bdm
\vrho(A_{7})=
\frac{\sqrt{3}}{2}E^{14}_{5,8}
-\frac{\sqrt{3}}{2}E^{14}_{6,7}
+\frac{\sqrt{3}}{2}E^{14}_{9,12}
+\frac{\sqrt{3}}{2}E^{14}_{10,11},\quad
\vrho(A_{8})=
\frac{\sqrt{3}}{2}E^{14}_{1,4}
-\frac{\sqrt{3}}{2}E^{14}_{2,3}
+\frac{\sqrt{3}}{2}E^{14}_{9,12}
-\frac{\sqrt{3}}{2}E^{14}_{10,11}
\edm\bdm
\vrho(A_{9})=
\frac{\sqrt{3}}{2}E^{14}_{5,6}
-\frac{\sqrt{3}}{2}E^{14}_{7,8}
-\frac{\sqrt{3}}{2}E^{14}_{9,10}
-\frac{\sqrt{3}}{2}E^{14}_{11,12},\quad
\vrho(A_{10})=
\frac{\sqrt{3}}{2}E^{14}_{1,2}
-\frac{\sqrt{3}}{2}E^{14}_{3,4}
+\frac{\sqrt{3}}{2}E^{14}_{9,10}
-\frac{\sqrt{3}}{2}E^{14}_{11,12}
\edm\bdm
\vrho(A_{11})=
-\frac{1}{2}E^{14}_{2,13}
+\frac{\sqrt{3}}{2}E^{14}_{2,14}
-\frac{1}{2}E^{14}_{5,9}
+\frac{1}{2}E^{14}_{6,10}
-\frac{1}{2}E^{14}_{7,11}
-\frac{1}{2}E^{14}_{8,12}
\edm\bdm
\vrho(A_{12})=
\frac{1}{2}E^{14}_{1,13}
-\frac{\sqrt{3}}{2}E^{14}_{1,14}
-\frac{1}{2}E^{14}_{5,10}
-\frac{1}{2}E^{14}_{6,9}
+\frac{1}{2}E^{14}_{7,12}
-\frac{1}{2}E^{14}_{8,11}
\edm\bdm
\vrho(A_{13})=
-\frac{1}{2}E^{14}_{4,13}
+\frac{\sqrt{3}}{2}E^{14}_{4,14}
-\frac{1}{2}E^{14}_{5,11}
+\frac{1}{2}E^{14}_{6,12}
+\frac{1}{2}E^{14}_{7,9}
+\frac{1}{2}E^{14}_{8,10}
\edm\bdm
\vrho(A_{14})=
\frac{1}{2}E^{14}_{3,13}
-\frac{\sqrt{3}}{2}E^{14}_{3,14}
-\frac{1}{2}E^{14}_{5,12}
-\frac{1}{2}E^{14}_{6,11}
-\frac{1}{2}E^{14}_{7,10}
+\frac{1}{2}E^{14}_{8,9}
\edm\bdm
\vrho(A_{15})=
+\frac{1}{2}E^{14}_{1,11}
-\frac{1}{2}E^{14}_{2,12}
-\frac{1}{2}E^{14}_{3,9}
+\frac{1}{2}E^{14}_{4,10}
+\frac{1}{2}E^{14}_{8,13}
+\frac{\sqrt{3}}{2}E^{14}_{8,14}
\edm\bdm
\vrho(A_{16})=
+\frac{1}{2}E^{14}_{1,12}
+\frac{1}{2}E^{14}_{2,11}
-\frac{1}{2}E^{14}_{3,10}
-\frac{1}{2}E^{14}_{4,9}
-\frac{1}{2}E^{14}_{7,13}
-\frac{\sqrt{3}}{2}E^{14}_{7,14}
\edm\bdm
\vrho(A_{17})=
+\frac{1}{2}E^{14}_{1,9}
+\frac{1}{2}E^{14}_{2,10}
+\frac{1}{2}E^{14}_{3,11}
+\frac{1}{2}E^{14}_{4,12}
+\frac{1}{2}E^{14}_{6,13}
+\frac{\sqrt{3}}{2}E^{14}_{6,14}
\edm\bdm
\vrho(A_{18})=
-\frac{1}{2}E^{14}_{1,10}
+\frac{1}{2}E^{14}_{2,9}
-\frac{1}{2}E^{14}_{3,12}
+\frac{1}{2}E^{14}_{4,11}
-\frac{1}{2}E^{14}_{5,13}
-\frac{\sqrt{3}}{2}E^{14}_{5,14}
\edm\bdm
\vrho(A_{19})=
+\frac{\sqrt{3}}{2}E^{14}_{1,3}
-\frac{\sqrt{3}}{2}E^{14}_{2,4}
+\frac{\sqrt{3}}{2}E^{14}_{5,7}
-\frac{\sqrt{3}}{2}E^{14}_{6,8}
\edm\bdm
\vrho(A_{20})=
+\frac{\sqrt{3}}{2}E^{14}_{1,4}
+\frac{\sqrt{3}}{2}E^{14}_{2,3}
+\frac{\sqrt{3}}{2}E^{14}_{5,8}
+\frac{\sqrt{3}}{2}E^{14}_{6,7}
\edm\bdm
\vrho(A_{21})=
-\frac{\sqrt{3}}{2}E^{14}_{1,2}
-\frac{\sqrt{3}}{2}E^{14}_{3,4}
-\frac{\sqrt{3}}{2}E^{14}_{5,6}
-\frac{\sqrt{3}}{2}E^{14}_{7,8}
\edm
%
%
\subsection{\boldmath$SU(4)/\SO(2)$}\label{ch:ap:su4}$ $
\smallskip\\
Looking at the given embedding, we define 
\bdm
K^1_{1}:=\frac{1}{\sqrt{2\alpha}}E^4_{1,3},\quad
K^1_{2}:=\frac{i}{\sqrt{2\alpha}}S^4_{1,3}, \quad
K^1_{3}:=\frac{1}{\sqrt{2\alpha_{2}}}E^4_{2,4},\quad
K^1_{4}:=\frac{i}{\sqrt{2\alpha_{2}}}S^4_{2,4},
\edm\bdm
K^1_{5}:=\frac{1}{\sqrt{2\alpha_{3}}}E^4_{2,3},\quad
K^1_{6}:=\frac{i}{\sqrt{2\alpha_{3}}}S^4_{2,3},\quad
K^1_{7}:=\frac{1}{\sqrt{2\alpha_{4}}}E^4_{1,4}, \quad
K^1_{8}:=\frac{i}{\sqrt{2\alpha_{4}}}S^4_{1,4},
\edm\bdm
K^1_{9}:=\frac{1}{\sqrt{2\alpha_{5}}}E^4_{1,2},\quad 
K^1_{10}:=\frac{i}{\sqrt{2\alpha_{6}}}S^4_{1,2},\quad
K^1_{11}:=\frac{}{\sqrt{2\alpha_{7}}}E^4_{3,4}, \quad
K^1_{12}:=\frac{i}{\sqrt{2\alpha_{8}}}S^4_{3,4},
\edm\bdm
K^1_{13}:=\frac{i}{2\sqrt{\beta}}(S^4_{1,1}-S^4_{2,2}+S^4_{3,3}-S^4_{4,4}),\quad
 K^1_{14}:=\frac{i}{2\sqrt{\gamma}}(-S^4_{1,1}+S^4_{2,2}+S^4_{3,3}-S^4_{4,4})
\edm
and
\bdm
H^1:=\frac{i}{2}(S^4_{1,1}+S^4_{2,2}-S^4_{3,3}-S^4_{4,4}).
\edm
We have $\su(4)=\so(2)\oplus\m_1=span(\{H^1\} \cup \{K^1_i~|~i=1..14\})$.
We get the representation of $\SO(2)$ as
\bdm
\vrho_1(H^1)K^1_i=K^1_{i+1},~ \vrho_1(H^1)K^1_{i+1}=-K^1_i \mbox{ for } i = 1, 3, 5, 7
\edm
and
\bdm
\vrho_1(H^1)K^1_i=0 \mbox{ for } i = 9..14.
\edm
This gives an identification $\m_1\ra\m$, $K^1_i\mapsto B_i$ inducing an 
inclusion $\SO(2)\subset\Sp(3)\subset\SO(\m)$ because of 
$\vrho_1(H^1)=\sqrt{2}\vrho(A_{21})$, and therefore defines an $\Sp(3)$  
structure on $\SU(4)/\SO(2)$.  
We compute the torsion and get in the basis we just defined
\begin{align*}
T & =\frac{1}{\sqrt{2\alpha}}(  e_{1} e_{5} e_{9}
-e_{1} e_{6} e_{10}
+e_{1}  e_{7}  e_{11} 
+e_{1}  e_{8}  e_{12}
+e_{2}  e_{5}  e_{10} 
+e_{2}  e_{6}  e_{9}
-e_{2}  e_{7}  e_{12} 
+e_{2}  e_{8}  e_{11}\\
& -e_{3}  e_{5}  e_{11 }
+e_{3}  e_{6}  e_{12}
-e_{3}  e_{7}  e_{9} 
-e_{3}  e_{8}  e_{10}
-e_{4}  e_{5}  e_{12} 
-e_{4}  e_{6}  e_{11}
+e_{4}  e_{7}  e_{10} 
-e_{4}  e_{8}  e_{9}) \\
&+\frac{\sqrt{\beta}}{\alpha}(e_{5} e_{6}  e_{13}
-e_{7}  e_{8}  e_{13} 
-e_{9}  e_{10}  e_{13}
-e_{11}  e_{12}  e_{13}) 
+\frac{\sqrt{\gamma}}{\alpha}(e_{1}  e_{2}  e_{14}
-e_{3}  e_{4}  e_{14} 
+e_{9}  e_{10}  e_{14}
-e_{11}  e_{12}  e_{14}). 
\end{align*}
\begin{NB}
This is not the only possible inclusion $\SO(2)\subset\Sp(3)$. We get other 
identifications $\m_1\cong\m$ inducing other $\Sp(3)$ structures.
\end{NB}
%
\subsection{\boldmath$\U(4)/\SO(2)\times\SO(2)$}\label{ch:ap:u4}$ $ 
\smallskip\\
We define a basis using almost the same matrices as above but taking other normalizers
\bdm
K^2_{1}:=\frac{1}{\sqrt{2\alpha}}E^4_{1,3}, \quad 
K^2_{2}:=\frac{i}{\sqrt{2\alpha}}S^4_{1,3}, \quad  
K^2_{3}:=\frac{1}{\sqrt{2\alpha_{2}}}E^4_{2,4},\quad 
K^2_{4}:=\frac{i}{\sqrt{2\alpha_{2}}}S^4_{2,4},\edm\bdm
K^2_{5}:=\frac{1}{\sqrt{2\alpha_{3}}}E^4_{2,3}, \quad 
K^2_{6}:=\frac{i}{\sqrt{2\alpha_{3}}}S^4_{2,3},\quad 
K^2_{7}:=\frac{1}{\sqrt{2\alpha_{4}}}E^4_{1,4}, \quad 
K^2_{8}:=\frac{i}{\sqrt{2\alpha_{4}}}S^4_{1,4},\edm\bdm
K^2_{9}:=\frac{1}{\sqrt{2\alpha_{5}}}E^4_{1,2},\quad 
K^2_{10}:=\frac{i}{\sqrt{2\alpha_{5}}}S^4_{1,2},\quad 
K^2_{11}:=\frac{}{\sqrt{2\alpha_{6}}}E^4_{3,4}, \quad 
K^2_{12}:=\frac{i}{\sqrt{2\alpha_{6}}}S^4_{3,4},\edm\bdm
K^2_{13}:=\frac{i}{2\sqrt{\beta}}(S^4_{1,1}-S^4_{2,2}+S^4_{3,3}-S^4_{4,4}), \quad 
K^2_{14}:=\frac{i}{2\sqrt{\gamma}}(S^4_{1,1}+S^4_{2,2}+S^4_{3,3}+S^4_{4,4})
\edm
and
\bdm
H^2_1:=\frac{i}{2}(S^4_{1,1}+S^4_{2,2}-S^4_{3,3}-S^4_{4,4}),\quad
H^2_2:=\frac{i}{2}(-S^4_{1,1}+S^4_{2,2}+S^4_{3,3}-S^4_{4,4}),
\edm
getting $\un(4)=\so(2)\oplus\so(2)\oplus\m_2=span(\{H^2_1,~H^2_2\}\cup \{K^2_i~|~i=1..14\})$.
The representation of $\SO(2)\times \SO(2)$ is given by
\bdm
\vrho_2(H^2_1)K^2_i=K^2_{i+1},~ \vrho_2(H^2_1)K^2_{i+1}=-K^2_i 
\mbox{ for } i = 1, 3, 5, 7, \quad \vrho_2(H^1)K^1_i=0 \mbox{ for } i = 9..14,
\edm
and
\bdm
\vrho_2(H^2_2)K^2_i=-K^2_{i+1},~ \vrho_2(H^2_2)K^2_{i+1}=K^2_i \mbox{ for } i = 1,9,
\edm
\bdm
\vrho_2(H^2_2)K^2_i=K^2_{i+1},~ \vrho_2(H^2_2)K^2_{i+1}=-K^2_i \mbox{ for } i = 3,11,
\edm
\bdm
\vrho_2(H^1)K^2_i=0 \mbox{ for } i = 5..8,13,14.
\edm
We choose the identification $\m_2\ra\m$, $K^2_i\mapsto B_i$ inducing a 
inclusion $\SO(2)\times\SO(2)\subset\Sp(3)\subset\SO(\m)$ because of 
$\vrho_2(H^2_1)=\sqrt{2}\vrho(A_{21})$ and
$\vrho_2(H^2_2)=\sqrt{2}\vrho(A_{10})$, 
therefore defining a $\Sp(3)$ structure on $\SU(4)/(\SO(2)\times\SO(2))$.
In this basis we can compute the torsion and get
\begin{align*}
T&=\frac{1}{\sqrt{2\alpha}}(e_{1}  e_{5}  e_{9}
-e_{1}  e_{6}  e_{10}
+e_{1}  e_{7}  e_{11}
+e_{1}  e_{8}  e_{12}
+e_{2}  e_{5}  e_{10}
+e_{2}  e_{6}  e_{9}
-e_{2}  e_{7}  e_{12}
+e_{2}  e_{8}  e_{11}\\
& -e_{3}  e_{5}  e_{11}
+e_{3}  e_{6}  e_{12}
-e_{3}  e_{7}  e_{9}
-e_{3}  e_{8}  e_{10}
-e_{4}  e_{5}  e_{12}
-e_{4}  e_{6}  e_{11}
+e_{4}  e_{7}  e_{10}
-e_{4}  e_{8}  e_{9})\\
&+\frac{\sqrt{\beta}}{\alpha}(e_{5}  e_{6}  e_{13}
-e_{7}  e_{8}  e_{13}
-e_{9}  e_{10}  e_{13}
-e_{11}  e_{12}  e_{13})
\end{align*}
%
%
\subsection{\boldmath$\U(4)\times\U(1)/\SO(2)\times\SO(2)\times\SO(2)$}\label{ch:ap:u4u1}$
$ 
\smallskip\\
We define a basis of 
$\un(4)\oplus\un(1)=\so(2)\oplus\so(2)\oplus\so(2)\oplus\m_3$ 
with $K^3_i$ for $i=1..14$ a basis of $\m_3$ and $H^3_i$  
for $i=1..3$ a basis of $\so(2)\oplus\so(2)\oplus\so(2)$:
\bdm
K^3_i:=(K^2_i,0) \mbox{ for } i\neq 13 \mbox { and } 
K_{13}^3:=(0,\frac{i}{\sqrt{\beta}}),
\edm
\bdm
H^3_i:=(H^2_i,0) \mbox{ for } i=1,2 \mbox{ and } 
H^3_3:=(\frac{i}{2}(S^4_{1,1}-S^4_{2,2}+S^4_{3,3}-S^4_{4,4}),0).
\edm
Identifying $\m\cong\m_1\cong\m_2\cong\m_3$ with 
$A_i\mapsto K^1_i\mapsto K^2_i\mapsto K^3_i$ we get the representation 
$\vrho_3$ of $\so(2)\oplus\so(2)\oplus\so(2)$ by
\bdm
\sqrt{2}\vrho(A_{21})=\vrho_1(H^1)=\vrho_2(H^2_1)=\vrho_3(H^3_1), \quad 
\sqrt{2}\vrho(A_{10})=\vrho_2(H^2_2)=\vrho_3(H^3_2), \quad \sqrt{2}\vrho(A_{9})=\vrho_3(H^3_3)
\edm
and again we get a $\Sp(3)$ structure. In this basis we can compute the torsion and get
\begin{align*}
T=&\frac{1}{\sqrt{2\alpha}}(e_{1}  e_{5}  e_{9}
-e_{1}  e_{6}  e_{10}
+e_{1}  e_{7}  e_{11}
+e_{1}  e_{8}  e_{12}
+e_{2}  e_{5}  e_{10}
+e_{2}  e_{6}  e_{9}
-e_{2}  e_{7}  e_{12}
+e_{2}  e_{8}  e_{11}\\
&-e_{3}  e_{5}  e_{11}
+e_{3}  e_{6}  e_{12}
-e_{3}  e_{7}  e_{9}
-e_{3}  e_{8}  e_{10}
-e_{4}  e_{5}  e_{12}
-e_{4}  e_{6}  e_{11}
+e_{4}  e_{7}  e_{10}
-e_{4}  e_{8}  e_{9}).
\end{align*}
%
\subsection{\boldmath$\SU(5)/\Sp(2)$}\label{ch:ApSU5}$ $
\smallskip\\
The Lie algebra of $\Sp(2)\subset\Sp(3)$ and its splitting of $V^{14}$ 
is given by Theorem \ref{thm:maxsub}. Calculating the torsion tensor we get
\begin{align*}
T=
&\frac{2\alpha-\beta}{2\alpha\sqrt{\beta}}(e_{1}  e_{2}  e_{13}
+e_{1}  e_{5}  e_{9}
-e_{1}  e_{6}  e_{10}
+e_{1}  e_{7}  e_{11}
+e_{1}  e_{8}  e_{12}
+e_{2}  e_{5}  e_{10}
+e_{2}  e_{6}  e_{9}
-e_{2}  e_{7}  e_{12}\\
& +e_{2}  e_{8}  e_{11}
+e_{3}  e_{4}  e_{13}
+e_{3}  e_{5}  e_{11}
-e_{3}  e_{6}  e_{12}
-e_{3}  e_{7}  e_{9}
-e_{3}  e_{8}  e_{10}
+e_{4}  e_{5}  e_{12}\\
&+e_{4}  e_{6}  e_{11}
+e_{4}  e_{7}  e_{10}
-e_{4}  e_{8}  e_{9}
-e_{5}  e_{6}  e_{13}
-e_{7}  e_{8}  e_{13})\\
&+\frac{\sqrt{5\beta\gamma}-\sqrt{6}(\alpha+\beta)}{2\alpha\sqrt{\beta}}(e_{1}  e_{2}  e_{14}
+e_{3}  e_{4}  e_{14}
+e_{5}  e_{6}  e_{14}
+e_{7}  e_{8}  e_{14}).
\end{align*}

\subsection{Maximal subgroups of \boldmath$\Sp(3)$}
\label{ch:ApSubGroups}
%
Using Dynkin's results \cite{dynk}, Gorodski and Podesta listed the maximal  
connected subgroups of $\Sp(n)$ in \cite{GP}. We restate the result for 
$G\subset\Sp(3)$ and add the decompositions of $V^{14}$ into
subrepresentations  
of $G\subset\Sp(3)$, computed easily via an appropriate computer 
algebra system.

Given a group $G\subset\Sp(3)$, we give a basis of its Lie algebra 
$\g\subset \sp(3)=\langle A_i~|~i=1..21\rangle$, the decomposition of 
$V^{14}=V_1\oplus..\oplus V_r$ in irreducible subspaces, and a basis of 
each $V_k\subset V^{14}=\langle B_i~|~i=1..14\rangle$.
\begin{thm}\label{thm:maxsub}
All maximal connected subgroups of $\Sp(3)$ and 
the decomposition of $V^{14}$ into submodules for these subgroups
are listed in Table $2$.
Furthermore, the subgroup $\Sp(2)\subset\Sp(2)\times\Sp(1)\subset\Sp(3)$ 
with Lie algebra  $\sp(2)=\langle \{A_i~|~i=1..10\}\rangle$ 
acts irreducibly on  $\Delta_5$, the irreducible $8$-dimensional 
spin representation of
$\Spin(5)\cong\Sp(2)$ and on $\p^5$, its usual vector representation, 
and thus $V^{14}$ has the same decomposition into $\Sp(2)$-isotopic summands as
under $\Sp(2)\times\Sp(1)$,
\bdm
V^{14}\ \stackrel{\Sp(2)}{=}\  \Delta_5\, 
\oplus\, \p^5\, \oplus\, \p^1,
\edm
$\p^1$ being the trivial representation.
\end{thm}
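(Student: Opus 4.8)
The first assertion---the complete list of maximal connected subgroups together with the branchings of $V^{14}$---I would not reprove from scratch: the classification of maximal connected subgroups of $\Sp(3)$ is exactly the content of the Gorodski--Podest\`a list \cite{GP} (itself resting on Dynkin \cite{dynk}), and for each entry the decomposition of $V^{14}$ is a finite branching computation that I would simply record from LiE, as the text already indicates. The substance of the theorem is the second part, the explicit $\Sp(2)$-refinement, and that is where I would put the work.

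My plan for the $\Sp(2)$-statement is to compute everything directly from the model $V^{14}=S^2_0(\H^3)$, using the exceptional isomorphism $\Sp(2)\cong\Spin(5)$. First I would record the two fundamental $\Sp(2)$-modules in this language: the standard module $\H^2$ is the spinor representation of $\Spin(5)$, whose underlying \emph{real} module is the $8$-dimensional $\Delta_5$, while $S^2_0(\H^2)$ is the vector representation $\p^5$ of $\SO(5)$ (its natural Jordan-norm quadratic form being the $\SO(5)$-invariant). Then, writing the block form $\bigl[\begin{smallmatrix}A&B\\B^*&d\end{smallmatrix}\bigr]$ of a trace-free hermitian endomorphism adapted to $\H^3=\H^2\op\H$ (so $A\in S^2(\H^2)$, $B\in\H^2$, $d\in\R$ with $\tr A+d=0$), the trace constraint yields at once $V^{14}=S^2_0(\H^2)\op\H^2\op\R\cong\p^5\op\Delta_5\op\p^1$ as $\Sp(2)\times\Sp(1)$-modules, where $\Sp(2)$ acts by conjugation on $A$ and by left multiplication on $B$, while $\Sp(1)$ acts only by right quaternionic multiplication on $B$. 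Restricting to the $\Sp(2)$-factor changes none of the three summands, since the $\Sp(1)$-action is by right scalars on $B$ and therefore commutes with, and is invisible to, $\Sp(2)$.

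The one genuinely nontrivial point, which I expect to be the main obstacle, is the \emph{irreducibility of $\Delta_5$ as a real $8$-dimensional $\Sp(2)$-module}: the complex spinor representation of $\Spin(5)$ is of quaternionic type, so its realification $\H^2$ stays irreducible over $\R$ and does not split into two copies of a $4$-dimensional piece. Granting this (the irreducibility of $\p^5$ being the standard irreducibility of the $\SO(5)$ vector representation), the three summands $\p^5,\Delta_5,\p^1$ have distinct dimensions $5,8,1$, hence are pairwise non-isomorphic; the decomposition is therefore multiplicity-free and its $\Sp(2)$-isotypic components are precisely the $\Sp(2)\times\Sp(1)$-summands, which is the final claim. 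As a consistency check I would confirm against Appendix \ref{ch:ap:sp3} that $\sp(2)=\langle A_1,\dots,A_{10}\rangle$ is exactly this block embedding and that the $\vrho(A_i)$, $i=1,\dots,10$, preserve the three subspaces above.
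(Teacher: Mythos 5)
Your proposal is correct, and it is worth noting that it is in one respect more of a proof than what the paper offers: the paper gives no proof environment for this theorem at all, handling the first assertion exactly as you do (citation of Gorodski--Podest\`a/Dynkin plus computer-algebra branchings recorded in Table 2) and simply asserting the $\Sp(2)$-irreducibility of $\Delta_5$ and $\p^5$, implicitly backed by the explicit bases $\langle B_1,\dots,B_8\rangle$, $\langle B_9,\dots,B_{13}\rangle$, $\langle B_{14}\rangle$ and the matrices $\vrho(A_1),\dots,\vrho(A_{10})$ of Appendix A. Your block decomposition of $S^2_0(\H^3)$ adapted to $\H^3=\H^2\oplus\H$ is a genuinely different and more conceptual route: it identifies the three summands intrinsically ($A_0\in S^2_0(\H^2)$, $B\in\H^2$, the trace part), makes transparent why the $\Sp(1)$-factor acts only by right scalars on $B$ and hence why dropping it changes nothing, and reduces the only nontrivial point to the standard fact that the quaternionic-type spin representation of $\Spin(5)\cong\Sp(2)$ remains irreducible after realification; multiplicity-freeness then follows from the distinct dimensions $5,8,1$. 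What your approach does not deliver, and what the paper's computational approach buys, is the explicit list of basis vectors $B_i$ spanning each summand inside the chosen realization $\m\subset\su(6)$ --- data that is actually used later (e.g.\ in Section 4.4 and Appendix A.4 for $\SU(5)/\Sp(2)$), so your final consistency check against the $\vrho(A_i)$, $i=1,\dots,10$, is not optional polish but the step that reconnects your intrinsic argument to the concrete bases the rest of the paper relies on.
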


\begin{table}
{\small \hspace{-.4cm}
\begin{tabular}{l|p{2cm}|p{4.3cm}|p{1.4cm}|p{5.3cm}|}
\cline{2-5}
&$G\subset \Sp(3)$ & Basis of $\g\subset\sp(3)$ & $V^{14}$& Basis of invariant submodules\\
\cline{2-5}
\cline{2-5}
\multirow{2}{.22cm}{{}}&\multirow{2}{2.1cm}{$ \U(3)$} & 
\multirow{2}{4.3cm}{$A_1,A_2,A_9,A_{10},A_{11},A_{12},$\newline$A_{17},A_{18},A_{21}$
}& $\R^8$ & $B_1,B_2,B_5,B_6,B_9,B_{10},B_{13},B_{14}$\\
\cline{4-5}
&&& $\R^6$ & $B_3,B_4,B_7,B_8,B_{11},B_{12}$\\
\cline{2-5}
\multirow{6}{.1cm}{}&\multirow{6}{2.1cm}{$\SO(3)$} & & 
\multirow{3}{1.4cm}{$\R^9$} & $\frac{2}{\sqrt{3}}B_{13}+B_{14},-\sqrt{\frac{5}{2}}B_{6}+B_{10},$\\
&& $\sqrt{10}A_1+4A_{17}-3A_{19},$ &&
$-\sqrt{\frac{5}{2}}B_{5}+B_{9}, \frac{3}{\sqrt{5}}B_{2}+B_{8},$\\
&&$\sqrt{10}A_2+4A_{18}+3A_{20},$&&$-\frac{3}{\sqrt{5}}B_{1}+B_{7},B_3,B_4,B_{11},B_{12}$\\
\cline{4-5}
&&
$3A_{9}+5A_{10}+A_{21}$&\multirow{3}{1.4cm}{$\R^5$}&$-\frac{\sqrt{3}}{2}B_{13}+B_{14},
\sqrt{\frac{2}{5}}B_{6}+B_{10},$\\
&&&&$\sqrt{\frac{2}{5}}B_{5}+B_{9}, -\frac{\sqrt{5}}{3}B_{2}+B_{8},$\\
&&&&$\frac{\sqrt{5}}{3}B_{1}+B_{7}$\\
\cline{2-5}
\multirow{3}{.1cm}{}&\multirow{3}{2.1cm}{$\Sp(2)\times\Sp(1)$} & 
\multirow{3}{4.3cm}{$A_1,~...~,A_{10},~A_{19},A_{20},A_{21}$}& $\Delta_5$& $B_1, .. ,B_8$\\
\cline{4-5}
&&&$\p^5$&$B_9,..B_{13}$\\
\cline{4-5}
&&&$\p^1$ & $B_{14}$\\
\cline{2-5}
\multirow{2}{.1cm}{}&\multirow{2}{2.1cm}{$\SO(3)\times\Sp(1)$} & 
\multirow{2}{4.3cm}{$A_1,A_{11},A_{17}, A_9+A_{10}+A_{21},$ 
$A_{5}+A_{6}+A_{19},A_{7}+A_{8}+A_{20}$} &$\R^3\otimes\R^3$
&$B_1,B_3,B_4,B_5,B_7,B_8,B_9,B_{11},B_{12}$\\
\cline{4-5}
&&&$\R^5\otimes\R^1$&$B_2,B_6,B_{10},B_{13},B_{14}$\\
\cline{2-5}
\end{tabular}
}
\caption{Maximal connected subgroups of $\Sp(3)$ and decompositions 
of $V^{14}$ into submodules.} 
\end{table}
%

%
\vspace{4cm}
    
\vspace{2cm}
\end{document}